\documentclass[
a4paper,
11pt,
DIV=14,
toc=bibliography, 
headsepline, 
parskip=half-,
abstract=true,
]{scrartcl}

\usepackage[centertags]{amsmath}
\usepackage[utf8]{inputenc}
\usepackage{enumerate,amsfonts,amssymb,amsthm,amsopn,cite,array,comment}

\usepackage[usenames]{color}
\definecolor{citegreen}{rgb}{0,0.6,0}
\definecolor{refred}{rgb}{0.8,0,0}
\usepackage[colorlinks, citecolor=citegreen, linkcolor=refred, urlcolor=black]{hyperref}

\title{Geometric Structure of Ends of Ricci Shrinkers}
\author{Alessandro Bertellotti and Reto Buzano}
\date{}


\newtheorem{theorem}{Theorem}[section]
\newtheorem{lemma}[theorem]{Lemma}
\newtheorem{cor}[theorem]{Corollary}
\newtheorem{prop}[theorem]{Proposition}

\theoremstyle{definition}
\newtheorem{rem}[theorem]{Remark}

\newtheorem{defn}[theorem]{Definition}
\numberwithin{equation}{section}
\newtheorem{claim}[theorem]{Claim}


\newcommand{\Z}{\mathbb{Z}}

\newcommand{\N}{\mathbb{N}}
\newcommand{\R}{\mathbb{R}}

\newcommand{\X}{\mathcal{X}}
\newcommand{\s}{\mathcal{S}}
\newcommand{\reg}{\mathcal{R}}

\newcommand{\I}{\mathcal{I}}
\newcommand{\x}{\mathbf{x}}
\newcommand{\y}{\mathbf{y}}
\newcommand{\z}{\mathbf{z}}
\newcommand{\Y}{\mathcal{Y}}
\newcommand{\C}{\mathfrak{C}}
\newcommand{\F}{\mathbb{F}}
\newcommand{\M}{\mathcal{M}}

\DeclareMathOperator{\Ric}{Ric}
\DeclareMathOperator{\Rm}{Rm}

\DeclareMathOperator{\Vol}{Vol}

\DeclareMathOperator{\var}{Var}
\DeclareMathOperator{\supp}{supp}
\DeclareMathOperator{\id}{Id}


\newcommand\printaddress{{
\setlength{\parindent}{15pt}
\footnotesize~
\par
{\scshape Alessandro Bertellotti}
\newline 
SISSA, 
Via Bonomea 265, 
34136 Trieste, Italy
\newline
\emph{E-mail address:} 
\texttt{abertell@sissa.it}
\par
{\scshape Reto Buzano}
\newline 
Universit\`a di Torino, 
Dipartimento di Matematica,
Via Carlo Alberto 10, 
10123 Torino, Italy 
\newline
\emph{E-mail address:} 
\texttt{reto.buzano@unito.it}
\par
}}


\begin{document}
\maketitle
\begin{abstract}
We study blow-up sequences of Ricci shrinkers without global curvature assumptions based at points $q$ at which the scalar curvature satisfies a Type~I bound, proving that their $\F$-limits split a line. In the four-dimensional case these limits are smooth Ricci shrinkers and the convergence is in the pointed smooth Cheeger-Gromov sense. As a consequence, limits along the integral curve of $\nabla f$ starting at such a point $q$ split a line. This generalises known results about the geometry of ends of Ricci shrinkers that relied on global curvature bounds. To obtain our results, we extend the $\F$-convergence theory from \cite{Ba20_structure, heatkernel, heatkernel2}.
\end{abstract}

\section{Introduction}\label{introduction}

The main goal of this article is to initiate the study of the geometric structure of the ends of a gradient shrinking Ricci soliton $(M,g,f)$ \emph{without} global curvature assumptions. In order to do so, we study pointed limits of such solitons along sequences of points going to infinity along an integral curve of $\nabla f$, or equivalently blow-up limits at the singular time away from the base point.

Recall that a smooth, connected, complete, $n$-dimensional Riemannian manifold $(M^n,g)$ is a gradient shrinking Ricci soliton, or a \emph{Ricci shrinker} for short,
if there exists a function 
$f\in C^{\infty}(M)$, called a potential, such that 
\begin{equation}\label{shrinker-eq}
\Ric_g + \nabla^{2}f =\frac{1}{2}g.
\end{equation} 
Ricci shrinkers are generalisations of positive Einstein manifolds, self-similar solutions of the Ricci flow
\begin{equation}
\partial_{t}g(t)=-2\Ric_{g(t)}, 
\end{equation}
and models for its finite time singularities. Indeed, the standard approach to study singularities of the Ricci flow consists in considering blow-up sequences. When a Ricci flow $(M,g(t))_{t\in [0,T)}$ satisfies a (global) Type~I curvature condition, i.e.~when there exists a constant $C>0$ such that 
\begin{equation}\label{eq.TypeI}
\sup_{M}|\Rm_{g(t)}|_{g(t)} \leq C/(T-t)
\end{equation}
for all $t\in [0,T)$, then by Naber \cite{Na10}, Enders-M\"uller-Topping \cite{EMT11} and Mantegazza-M\"uller \cite{MM15}, the pointed blow-up sequence $(M,g_{k}(t),q)_{k\in \N}$, where $g_{k}(t)$ are given by
\begin{equation}\label{metric-blow-up-intro}
g_{k}(t)=\lambda_{k}g(T+t/\lambda_{k}), \qquad t\in [-\lambda_k T,0)
\end{equation}
for some $\lambda_k \nearrow \infty$, converges to a smooth Ricci shrinker in the pointed smooth Cheeger-Gromov sense on compact time-intervals. Moreover, the nature of the point $q$ determines if the limit Ricci shrinker is trivial or not. These results are based on the compactness theory for sequences of Ricci flows
developed by Hamilton in \cite{HamComp}, which heavily relies on curvature bounds, which is why \eqref{eq.TypeI} is needed. 

In order to remove or weaken the Type~I condition, one needs an adequate compactness theory
that works in a more general context. Towards such a theory \cite{hm, hm4, LLW} studied the compactness properties of sequences of pointed Ricci shrinkers (under an entropy lower bound) while \cite{bamler,Hallgren} extended the results from \cite{EMT11, MM15} to the case of Ricci flows with a Type~I condition on the scalar curvature only. From these results one sees that it is necessary to take into account weaker notions of convergence to limit spaces that are no longer smooth manifolds, but metric spaces with some kind of regular-singular decomposition.

Recently, in a series of papers \cite{Ba17, Ba20_hk, Ba20_compactness, Ba20_structure}, Bamler developed a groundbreaking new compactness theory which extends the previously known results and relies on the notion of $\F$-convergence. This convergence is expressed in terms of \emph{metric flow pairs}, that consist of a metric flow (some sort of metric space analogue of a Ricci flow), together with a conjugate heat flow (a family of probability measures satisfying a reproduction formula); see \cite{Ba20_compactness}. His theory allows to extract limits from \emph{general} sequences of Ricci flows, and in the compact case, under a volume non-collapsing condition, gives precise geometric information about the limit space; see \cite{Ba20_structure}. Bamler later explained how his results can be generalised to complete, non-compact flows with bounded curvature on compact time-intervals; see the appendix of \cite{Ba21}.

Li and Wang \cite{heatkernel, heatkernel2} generalised some of the results from \cite{Ba20_compactness, Ba20_structure} to sequences of pointed Ricci shrinkers with a lower entropy bound and \emph{without any curvature assumption}. We briefly illustrate the general idea.  
First of all, it is well known that any Ricci shrinker $(M,g,f)$ has an associated Ricci flow $(M,g(t))_{t<1}$, with $T=1$ as its singular time; in Section \ref{shrinker-preliminaries-section} we will review the precise construction. Choosing a basepoint from $M\times (-\infty,1)$, this flow can be turned into a metric flow pair in a canonical way, thanks to the existence of a conjugate heat kernel function $H$ on $M$; see \cite[Section~3]{heatkernel} and \cite[Section~6]{heatkernel2} for details. 
At this point, one can interpret any sequence of pointed Ricci flows associated with Ricci shrinkers, or more generally induced by them as in Definition \ref{ricci-flow-induced}, as a sequence of metric flow pairs, allowing one to use the theory from \cite{Ba20_compactness}. Finally, the arguments from \cite{Ba20_structure} can be adapted to this case, yielding the main results in \cite{heatkernel2}.

In the present article, we want to consider blow-up sequences for Ricci shrinkers (without global curvature bounds) where the basepoint lies in the \emph{singular} time slice of the flow. In other words, we want to generalise the results from Li and Wang mentioned above to the case where the basepoint comes from $M\times \{1\}$. A first step consists in the construction of a suitable heat kernel function based at the singular time (Theorem \ref{thm-sing-heat-kernel-intro}), similarly to what was done in \cite{MM15} in the context of compact Type~I Ricci flows. This will then yield a general convergence result (Theorem 
\ref{scaling-compactness-thm-intro}). We stress that all the technical concepts and notions not defined so far can be found in later sections of the present paper or, for the ones regarding the $\F$-convergence theory, in Bamler's article \cite{Ba20_compactness}, which is the standard reference on the subject.   

\begin{theorem}[Heat Kernel Based at the Singular Time]\label{thm-sing-heat-kernel-intro} 
Let $(M^n,g,f)$ be a Ricci shrinker, with associated Ricci flow $(M^n,g(t))_{t<1}$, and let $H\colon \mathcal{D}\to \R$ be its heat kernel function. Given a point $q\in M$ and a sequence of times 
$\mathsf{t}_{i}\nearrow 1$, consider the functions $v_{i}(y,s)=H(q,\mathsf{t}_{i};y,s)$ defined on $M\times (-\infty,\mathsf{t}_{i})$. 
Then the sequence $(v_{i})_{i\in \N}$ subconverges in $C^{\infty}_{\text{loc}}(M\times (-\infty,1))$ to a smooth, positive function 
$v_{q}\in C^{\infty}(M\times (-\infty,1))$ satisfying the following conditions.
\begin{enumerate}
\item $v_{q}$ solves the conjugate heat equation $\partial_{t}v_{q}=-\Delta v_{q} +S v_{q}$ on $M\times (-\infty,1)$, where $\Delta$ and $S$ denote, respectively, the Laplacian and the scalar curvature of $g(t)$.
\item $\int_{M}v_{q}(y,s)dV_{s}(y)=1$ for every $s<1$.
\item For every $y\in M$, $s<1$ and $\varrho \in (s,1)$, the following semigroup property holds
\begin{equation*}
v_{q}(y,s)=\int_{M}H(z,\varrho; y,s)v_{q}(z,\varrho)dV_{\varrho}(z).
\end{equation*}
\end{enumerate}
\end{theorem}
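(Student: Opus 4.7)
The plan is to combine local parabolic regularity with global Gaussian-type heat kernel estimates on the Ricci shrinker background, extract a smooth subsequential limit, and then transfer the relevant properties of the pre-limit functions $v_i$ to this limit.

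First, I would establish uniform local bounds. Fix a compact set $K\subset M$ and an interval $[a,b]\subset(-\infty,1)$; for $i$ large enough, $b<\mathsf{t}_i$. Gaussian-type upper bounds for the heat kernel on a Ricci flow background (available in the shrinker setting through the estimates in \cite{heatkernel, heatkernel2} together with the quadratic growth of $f$) yield a uniform bound $v_i\le C(K,a,b,q)$ on $K\times[a,b]$. Applying standard parabolic regularity to the conjugate heat equation $\partial_t v_i=-\Delta v_i+Sv_i$ then produces uniform $C^k$-bounds on $K\times[a,b]$ for every $k$, and a diagonal Arzel\`a--Ascoli argument extracts a subsequence converging in $C^\infty_{\text{loc}}(M\times(-\infty,1))$ to a nonnegative smooth limit $v_q$ that inherits the conjugate heat equation, establishing~(i).

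For (iii), each $v_i$ satisfies the semigroup identity $v_i(y,s)=\int_M H(z,\varrho;y,s)v_i(z,\varrho)\,dV_\varrho(z)$ whenever $s<\varrho<\mathsf{t}_i$, by the reproduction formula for $H$ applied to $v_i=H(q,\mathsf{t}_i;\cdot,\cdot)$. Passing to the limit in $i$ is done by dominated convergence: the Gaussian upper bound on $H(\cdot,\varrho;y,s)$ together with a uniform Gaussian-type tail bound on $v_i(\cdot,\varrho)=H(q,\mathsf{t}_i;\cdot,\varrho)$ provides an integrable majorant, while $C^\infty_{\text{loc}}$ convergence handles the behavior on any fixed compact piece.

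The main obstacle is (ii), which amounts to ruling out loss of mass at spatial infinity along the sequence. Each $v_i$ has unit mass by conservation of mass for the conjugate heat equation, so Fatou only gives $\int_M v_q(\cdot,s)\,dV_s\le 1$; the reverse inequality requires uniform tightness, i.e.\ a uniform-in-$i$ tail estimate $\int_{M\setminus B_R(q,s)}v_i(y,s)\,dV_s(y)\to 0$ as $R\to\infty$, where $B_R(q,s)$ is the $g(s)$-ball of radius $R$ around $q$. This is where the shrinker structure is essential: one combines Gaussian upper bounds for $H(q,\mathsf{t}_i;\cdot,s)$, the reduced-distance estimates from the soliton identity \eqref{shrinker-eq}, and the resulting control on volume growth on each time slice to absorb the bulk of the mass into a fixed ball independently of~$i$. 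Once mass is preserved, $v_q$ is nontrivial, and the strong maximum principle applied to the conjugate heat equation forces $v_q>0$ on all of $M\times(-\infty,1)$, which completes the proof.
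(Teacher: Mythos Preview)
Your outline is essentially correct and parallels the paper's proof, with the same skeleton: on-diagonal bound $\Rightarrow$ local $C^k$ estimates via parabolic Schauder theory $\Rightarrow$ Arzel\`a--Ascoli and a diagonal argument $\Rightarrow$ smooth limit solving the conjugate heat equation, then pass the integral identities to the limit. The substantive difference is in the tightness argument for~(ii). The paper does \emph{not} invoke off-diagonal Gaussian bounds; instead it uses the cut-off functions $\phi^r$ from \cite[Lemma~3]{heatkernel} satisfying $|\square\phi^r|\le C(n)/r$: differentiating $s\mapsto\int_M v_i\,\phi^r\,dV_s$, integrating from $\bar s$ up to $\mathsf{t}_i$, and using $v_i(\cdot,s)\to\delta_q$ as $s\nearrow\mathsf{t}_i$ yields $\big|1-\int_M v_i(\cdot,\bar s)\phi^r\,dV_{\bar s}\big|\le C(n)(1-\bar s)/r$ uniformly in $i$. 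This is robust and uses nothing beyond the on-diagonal estimate \eqref{ultracontr}. Your proposed route via uniform Gaussian tails and ``reduced-distance estimates from the soliton identity'' is vaguer and would require verifying that the relevant constants do not degenerate as $\mathsf{t}_i\nearrow 1$, with no curvature assumption at $q$; the cut-off argument bypasses this issue entirely and is the key idea you are missing.

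Two smaller differences: for (iii) the paper first establishes (ii), deduces narrow convergence of $\nu_{(q,\mathsf{t}_i);\varrho}\to\nu_{q;\varrho}$, and then passes to the limit using only that $z\mapsto H(z,\varrho;y,s)$ is continuous and bounded by \eqref{ultracontr}. Your dominated-convergence route also works, and in fact more simply than you indicate---no Gaussian tails are needed, since the on-diagonal bound $v_i(\cdot,\varrho)\le C$ together with $\int_M H(z,\varrho;y,s)\,dV_\varrho(z)\le 1$ from \eqref{integral-t} already furnishes an integrable majorant. For positivity the paper avoids the strong maximum principle altogether: if $v_q(y,s)=0$, the semigroup identity and $H>0$ force $v_q(\cdot,\varrho)\equiv 0$ for every $\varrho\in(s,1)$, contradicting~(ii).
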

\vspace{-1mm}

\begin{theorem}[$\F$-Convergence]\label{scaling-compactness-thm-intro} 
Let $\left(M_{k}^{n}, g_{k}(t), (q_{k},0)\right)_{t<T_{k}}$, for $k\in \N$, be a sequence of $n$-dimensional pointed Ricci flows induced by Ricci shrinkers and defined over a maximal time-interval of the form $(-\infty,T_{k})$, with finite singular time $0\leq T_{k}<\infty$.
Denote by $(\X^{k},(\mu^{k}_{t})_{t \leq 0})$ the corresponding (extended) metric flow pairs over $(-\infty,0]$.  After passing to a subsequence, there exists, up to isometry, a unique 
$H_{n}$-concentrated and future continuous metric flow pair $(\X^{\infty},(\mu^{\infty}_{t})_{t\leq 0})$, representing a class in 
$\F_{(-\infty,0]}^{\ast}(H_{n})$, such that the following holds.
There is a correspondence $\C$ over $(-\infty,0]$ between the metric flows $\X^{k}$, $k\in \N\cup\{\infty\}$, such that
\begin{equation*}
(\X^{k},(\mu^{k}_{t})_{t\leq 0})\longrightarrow (\X^{\infty},(\mu^{\infty}_{t})_{t\leq 0})
\end{equation*}
within $\C$ on compact time-intervals.
Moreover, the convergence is uniform over any compact $J\subseteq (-\infty,0]$ that only contains times at which $\X^{\infty}$ is continuous.
\end{theorem}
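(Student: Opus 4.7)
The plan is to realize each induced Ricci flow as a metric flow pair over $(-\infty,0]$, apply Bamler's $\F$-compactness machinery in the form developed for pointed Ricci shrinkers in \cite{heatkernel, heatkernel2}, and verify that the possibility of a singular-time basepoint does not cause any extra obstruction. The construction of the metric flow pair $(\X^k,(\mu^k_t)_{t\leq 0})$ splits naturally into two cases depending on whether the basepoint $(q_k,0)$ sits in the regular part of the flow or at the singular time. When $T_k>0$, the basepoint is regular and we take $\mu^k_t$ to be the standard conjugate heat kernel measure based at $(q_k,0)$; when $T_k=0$, we instead use $\mu^k_t = v_{q_k}(\cdot,t)\,dV_t$ for the function $v_{q_k}$ constructed in Theorem~\ref{thm-sing-heat-kernel-intro}. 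In both cases this yields a conjugate heat flow of probability measures satisfying the reproduction formula, and combined with the canonical metric flow structure induced by the Ricci flow (as in \cite[Section~6]{heatkernel2}), this produces a valid extended metric flow pair over $(-\infty,0]$.

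To extract the $\F$-limit, I first check that the sequence represents a family in $\F^*_{(-\infty,0]}(H_n)$ with uniform geometric control. Each flow pair is $H_n$-concentrated: for $T_k>0$ this is a classical property of Ricci flows, and for $T_k=0$ it follows by approximating $v_{q_k}$ by the smooth heat kernels $v_i$ of Theorem~\ref{thm-sing-heat-kernel-intro} and using that $H_n$-concentration passes to such smooth limits of conjugate heat flows. The Ricci-shrinker structure supplies the uniform lower bound on the Nash entropy at the basepoints that is needed to apply the compactness theorem of \cite{Ba20_compactness}: indeed, the $\mu$-functional of the associated flow is constant and equals the shrinker entropy, and one works within a class of shrinkers with uniformly controlled entropy. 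Applied on each compact subinterval $[-T,0]$ and combined with a standard diagonal argument, the shrinker-adapted $\F$-compactness from \cite{heatkernel, heatkernel2} yields, after passing to a subsequence, a limit metric flow pair $(\X^\infty,(\mu^\infty_t)_{t\leq 0})$ together with a correspondence $\C$ implementing the $\F$-convergence on compact time-intervals. The $H_n$-concentration of the limit and uniformity of the convergence on compact subsets of continuity points are then automatic consequences of the abstract theory.

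The main technical obstacle is verifying future continuity of the limit at the right endpoint $t=0$ when some (or infinitely many) of the $T_k$ equal $0$, because there the measures $\mu^k_t$ concentrate to $q_k$ as $t\to 0^-$. This concentration is encoded implicitly in Theorem~\ref{thm-sing-heat-kernel-intro} through the realization of $v_{q_k}$ as a smooth limit of near-singular heat kernels $v_i(\cdot,s) = H(q,\mathsf{t}_i;\cdot,s)$. To pass future continuity to $(\X^\infty,(\mu^\infty_t))$, a quantitative, $k$-uniform version of this concentration in a Wasserstein sense is required. I expect this to be the subtlest step and the place where the estimates underlying the proof of Theorem~\ref{thm-sing-heat-kernel-intro} do most of the work, since one needs uniform control on the profile of the measures $\mu^k_t$ near $t=0$ independent of the ambient geometry of $M_k$. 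Once such uniform near-singular-time control is in place, future continuity of the limit and the claimed uniformity of $\F$-convergence on compact subsets of continuity points follow from the general stability results of \cite{Ba20_compactness}, completing the proof.
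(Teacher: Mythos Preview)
Your proposal misidentifies both what is needed and what is hard. Two concrete problems:

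First, you add a hypothesis that is not in the theorem. You write that ``the Ricci-shrinker structure supplies the uniform lower bound on the Nash entropy at the basepoints'' and that ``one works within a class of shrinkers with uniformly controlled entropy''. Theorem~\ref{scaling-compactness-thm-intro} has no such assumption; the entropy lower bound only enters later, in Theorem~\ref{good-convergence-intro}, where structural information on the limit is sought. Bamler's abstract $\F$-compactness (Theorem~7.6 of \cite{Ba20_compactness}) does not require any non-collapsing or entropy control; it requires membership in a class $\F_{(-\infty,0]}^{t_{\max}}(H_n,V,b,r)$, whose only nontrivial condition beyond $H_n$-concentration is a bound on the final time slice $(\X^k_0,d^k_0,\mu^k_0)\in\mathbb{M}_r(V,b)$. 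Invoking entropy here both proves a weaker statement than claimed and obscures why the result is actually true.

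Second, you treat future continuity at $t=0$ as ``the main technical obstacle'' and leave it essentially open (``I expect this to be the subtlest step''). In fact this step is trivial, and seeing why is the whole point of the paper's argument. The variance bound $\var_t(\mu^k_t)\leq H_n|t|$ (Proposition~\ref{concentration-shrinker}, valid uniformly in $k$ for both the regular and singular-time heat kernels) forces, via Bamler's extension procedure, that each extended final slice $\X^k_0$ is a single point with $\mu^k_0$ a Dirac mass. A singleton trivially lies in $\mathbb{M}_r(V,b)$ with $V=0$ and arbitrary $r,b$. Hence the hypotheses of \cite[Theorem~7.6]{Ba20_compactness} are satisfied immediately, and that theorem delivers the subsequential $\F$-limit together with its $H_n$-concentration, future continuity, uniqueness in $\F^*_{(-\infty,0]}(H_n)$, and uniformity of convergence on compact sets of continuity times --- all as direct outputs, with nothing further to verify. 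The paper's proof is essentially this observation and nothing more.
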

\vspace{-1mm}

Observe that Theorem \ref{scaling-compactness-thm-intro} is a generalisation of Theorem~6.10 in \cite{heatkernel2}; the key difference being that we allow the case $T_{k}=0$, corresponding to taking basepoints in the singular time slice.

Obviously, one would like to say something more about the structure of the limit $\X^{\infty}$, and promote the $\F$-convergence from Theorem \ref{scaling-compactness-thm-intro} to a better one. This is the content of the next result, which is obtained constructing an approximating sequence of metric flow pairs converging to the same limit but to which Theorem~6.12 from \cite{heatkernel2} can be applied. 

\begin{theorem}[Structure of the $\F$-Limit]\label{good-convergence-intro} 
Under the same assumptions as in Theorem \ref{scaling-compactness-thm-intro}, supposing in addition that the Ricci flows $(M^{n}_{k},g_{k}(t))_{t<T_{k}}$ are induced by Ricci shrinkers with entropy uniformly bounded below by some $\underline{\mu}>-\infty$, the following hold.
\begin{enumerate}
\item We can decompose
\begin{equation*}
\X^{\infty}_{t<0}=\reg^{\infty}\sqcup \s^{\infty},
\end{equation*}
where $\reg^{\infty}$, the set of regular points of $\X^{\infty}$, has the structure of an $n$-dimensional Ricci flow spacetime $(\reg^{\infty}, \mathfrak{t}^{\infty},\partial^{\infty}_{\mathfrak{t}},g^{\infty})$ over $(-\infty,0)$. Moreover, $\dim_{\M^{\ast}}(\s^{\infty})\leq n-2$, where 
$\dim_{\M^{\ast}}$ denotes the $\ast$-Minkowski dimension, and $\mu^{\infty}_{t}(\s^{\infty}_{t})=0$ for every $t<0$.
\item Every tangent flow of the limit is a metric soliton.
\item Each time slice $\reg^{\infty}_{t}=\reg^{\infty}\cap \X^{\infty}_{t}$ is open and dense in $\X^{\infty}_{t}$, and $d_{t}^{\infty}\vert_{\reg^{\infty}_{t}}$ coincides with the Riemannian length metric induced by $g^{\infty}_{t}=g^{\infty}\vert_{\reg_{t}}$.
\end{enumerate}
\end{theorem}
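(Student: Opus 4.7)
The strategy, signalled in the paragraph preceding the theorem, is to perturb each basepoint $(q_k,0)$ slightly backward in time in order to reduce to a situation where the structural result \cite[Thm.~6.12]{heatkernel2} applies directly, and then to identify the auxiliary $\F$-limit so obtained with $\X^\infty$ on $(-\infty, 0)$.

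For each $k$, the conjugate heat flow $\mu^k_t = v_{q_k}(\cdot, t)\,dV_{g_k(t)}$ was constructed in Theorem \ref{thm-sing-heat-kernel-intro} as a $C^\infty_{\mathrm{loc}}$-limit of conjugate heat kernels $H(q_k, \mathsf t_{k,i};\cdot,\cdot)$ based at smooth times $\mathsf t_{k,i}\nearrow 0$. By a diagonal argument I would pick a sequence $s_k \nearrow 0$ with $s_k<0$ and define auxiliary conjugate heat flows $\tilde\mu^k_t = H(q_k, s_k;\cdot, t)\,dV_{g_k(t)}$ on $t< s_k$. The semigroup property in Theorem \ref{thm-sing-heat-kernel-intro}(3), combined with the smooth construction of $v_{q_k}$, allows the $s_k$ to be chosen so that the probability measures $\tilde\mu^k_{s_k}$ (a near-Dirac heat kernel initial condition) and $\mu^k_{s_k} = v_{q_k}(\cdot, s_k)\,dV_{g_k(s_k)}$ are arbitrarily close in Wasserstein $W_1$-distance; propagating this closeness to more negative times via the usual contractivity of conjugate heat flow under Ricci flow then yields $W_1(\tilde\mu^k_t, \mu^k_t) \to 0$ uniformly on any compact $J \subset (-\infty, 0)$.

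Since the entropy lower bound is rescaling-invariant and insensitive to the basepoint time, the auxiliary sequence $(\X^k, (\tilde\mu^k_t)_{t\leq s_k}, (q_k, s_k))$ satisfies the hypotheses of \cite[Thm.~6.12]{heatkernel2} directly; after a further subsequence, that theorem yields an $\F$-limit $(\tilde\X^\infty, (\tilde\mu^\infty_t)_{t<0})$ enjoying conclusions (1)--(3) of the present statement on $(-\infty, 0)$. Combining the uniform $W_1$-closeness above with Theorem \ref{scaling-compactness-thm-intro} and uniqueness of $\F$-limits within a fixed correspondence identifies $(\tilde\X^\infty, (\tilde\mu^\infty_t))$ with $(\X^\infty, (\mu^\infty_t))$ on $(-\infty, 0)$, whereupon the three structural properties transfer verbatim to $\X^\infty_{t<0}$.

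The main obstacle is precisely the comparison in the middle paragraph: the $C^\infty_{\mathrm{loc}}$-convergence $v_i \to v_{q_k}$ and the $W_1$-closeness of the resulting conjugate heat flows must be coupled by a quantitative argument uniform in $k$, and the comparison must be implemented in a way compatible with the correspondence $\C$ from Theorem \ref{scaling-compactness-thm-intro} so that the two $\F$-limits are forced to coincide. Once this technical identification is carried out, the structural conclusions follow from \cite[Thm.~6.12]{heatkernel2} without further difficulty.
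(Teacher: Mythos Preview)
Your overall strategy---perturb each basepoint back to a regular time, apply \cite[Thm.~6.12]{heatkernel2} to the auxiliary sequence, and then identify the auxiliary $\F$-limit with $\X^\infty$---is exactly the paper's. The gap is in the comparison step you yourself flag.

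You assert that the semigroup property together with the $C^\infty_{\mathrm{loc}}$ construction of $v_{q_k}$ forces $\tilde\mu^k_{s_k}=\delta_{q_k}$ and $\mu^k_{s_k}=v_{q_k}(\cdot,s_k)\,dV_{g_k(s_k)}$ to be $W_1$-close. Neither ingredient gives this. The convergence $H(q_k,\mathsf t_{k,i};\cdot,\cdot)\to v_{q_k}$ is $C^\infty$ on \emph{compact} subsets of $M_k\times(-\infty,0)$, hence controls the associated measures only at times bounded away from zero; it says nothing about $v_{q_k}(\cdot,s_k)$ as $s_k\nearrow 0$. The claim $d_{W_1}(\delta_{q_k},\mu^k_{s_k})\to 0$ is precisely the statement that $(q_k,s_k)$ is an $H$-center of $(q_k,0)$, and the paper establishes that only under the additional Type~I scalar curvature hypothesis at $q_k$ (Proposition~\ref{q-is-center}); it is \emph{not} available in the generality of Theorem~\ref{good-convergence-intro}. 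In general the singular-time conjugate heat kernel measure $\mu^k_{s_k}$ has small variance (bounded by $H_n|s_k|$) and so concentrates somewhere, but there is no reason for that location to be near $q_k$.

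The paper's remedy is to give up the spatial point $q_k$: for each $k$ and each $t_i\nearrow 0$ it picks an $H_n$-center $z_{k,i}\in\X^k_{t_i}$ of $\mu^k_{t_i}$, which exists by Proposition~\ref{centers-metric-flow} and automatically satisfies $d_{W_1}(\delta_{z_{k,i}},\mu^k_{t_i})\le\sqrt{H_n|t_i|}\to 0$. One then bases the auxiliary conjugate heat flow at $(z_{k,i},t_i)$, time-shifts, and runs the diagonal/identification argument you describe (Lemmas~\ref{w1-distance-points}--\ref{double-conv-lemma} and Proposition~\ref{prop-convergence}). With this single change---replace $q_k$ by an $H_n$-center---your outline goes through.
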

\vspace{-1mm}

Furthermore, the convergence of the approximating sequence towards $(\X^{\infty},(\mu^{\infty}_{t})_{t\leq 0})$ (obtained in Proposition \ref{prop-convergence}) is smooth on $\reg^{\infty}$ in the sense of Proposition \ref{good-convergence-prop}. Finally, there is also a precise description of the structure of tangent flows at points of the limit $(\X^{\infty},(\mu^{\infty}_{t})_{t\leq 0})$, given by Proposition \ref{metric-soliton-structure}. While these results are crucial for what follows, we do not state these propositions here to keep the introduction shorter and less technical.

The above results can be specialised to the case of blow-up sequences of arbitrary Ricci shrinkers, obtaining the following result.

\begin{theorem}[Blow-ups of Ricci Shrinkers]\label{thm-blow-up-intro} Let $(M^{n},g, f)$ be any Ricci shrinker (possibly without curvature bounds). Let $(M,g_{k}(t),q)_{k\in \N}$, with $g_k(t)$ as in \eqref{metric-blow-up-intro}, be a blow-up sequence based at some $q\in M$. Let $(\X^{k},(\mu^{k}_{t})_{t\leq 0})$, for $k\in \N$, be the sequence of associated (extended) metric flow pairs over $(-\infty,0]$. Then the conclusions of Theorems \ref{scaling-compactness-thm-intro}
and \ref{good-convergence-intro} hold. Moreover, the limit $(\X^{\infty},(\mu^{\infty}_{t})_{t\leq 0})$ is a continuous metric soliton satisfying all the structural properties from Proposition \ref{metric-soliton-structure}.
\end{theorem}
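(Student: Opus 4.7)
The plan is first to verify that the hypotheses of Theorems \ref{scaling-compactness-thm-intro} and \ref{good-convergence-intro} are satisfied by the blow-up sequence, and then to exploit the self-similarity of the underlying Ricci shrinker flow to identify the $\F$-limit as a continuous metric soliton.

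For the verification, parabolic rescalings preserve the class of Ricci shrinker flows, so each $g_k(t) = \lambda_k g(1+t/\lambda_k)$ is the canonical Ricci flow of a Ricci shrinker, defined on $(-\infty, 0)$ and hence with $T_k = 0$ in the notation of Theorem \ref{scaling-compactness-thm-intro}. Perelman's $\mu$-functional is invariant under parabolic rescaling, giving $\mu(g_k) = \mu(g,f) > -\infty$ for all $k$, which supplies the uniform lower entropy bound required by Theorem \ref{good-convergence-intro}. Both theorems then apply and yield assertion~(1).

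To obtain the metric soliton structure, we exploit the scaling identity
\begin{equation*}
\lambda\, g_k(t/\lambda) \,=\, g_{\lambda\lambda_k}(t), \qquad \lambda > 0,
\end{equation*}
together with the self-similar structure of the shrinker flow $g(t) = (1-t)\phi_t^* g$, where $\phi_t$ is the flow of $-\nabla f/(1-t)$; in the renormalised time $\tau = -\log(1-t)$, this reduces to the autonomous gradient flow $\Phi_\tau$ of $f$ on $(M,g)$. A short calculation then gives $g_{\lambda\lambda_k}(t) = \Phi_{\log\lambda}^* g_k(t)$ for all $k, t$, so that $g_k$ and $g_{\lambda\lambda_k}$ are isometric as Ricci flows via the \emph{fixed} diffeomorphism $\Phi_{\log\lambda}$. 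Combined with the scaling-equivariance of the construction of $\X^k$ from the singular-time heat kernel $v_q^k$ of Theorem \ref{thm-sing-heat-kernel-intro}, this identifies the parabolic rescaling $\mathcal{R}_\lambda \X^{\lambda_k}$ (based at $(q,0)$) with the metric flow pair of $g_{\lambda_k}$ based at the shifted point $(\Phi_{\log\lambda}(q),0)$. Passing to the $\F$-limit along the chosen subsequence and using that $q$ and $\Phi_{\log\lambda}(q)$ lie on the same gradient-flow trajectory of $f$ and hence share the same asymptotic behaviour as $\tau \to \infty$, one obtains $\mathcal{R}_\lambda \X^\infty \cong \X^\infty$ for every $\lambda>0$, which is the defining property of a metric soliton.

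Continuity of $\X^\infty$ then follows formally from the scale invariance: a discontinuous time $t_0 < 0$ would, after rescaling by every $\lambda>0$, generate discontinuities throughout $(-\infty,0)$, contradicting the (at most countable) cardinality of the set of jump times of an $\F$-limit. The remaining structural assertions are then those of Proposition \ref{metric-soliton-structure}. The main obstacle I anticipate is the rigorous identification of the $\F$-limits at the two basepoints $(q,0)$ and $(\Phi_{\log\lambda}(q),0)$: intuitively these agree since both basepoints flow to the same asymptotic critical point of $f$ along $\Phi_\tau$ and the blow-up forgets bounded portions of the $\tau$-trajectory, but the rigorous verification, in the absence of global curvature bounds on the shrinker, requires careful analysis.
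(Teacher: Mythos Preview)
Your verification that the blow-up sequence satisfies the hypotheses of Theorems~\ref{scaling-compactness-thm-intro} and~\ref{good-convergence-intro} is correct and matches the paper.

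The gap is in the metric soliton argument, and it is precisely the obstacle you flag at the end. Your scaling identity $g_{\lambda\lambda_k}(t)=\Phi_{\log\lambda}^*g_{\lambda_k}(t)$ is correct, and it shows that $\mathcal{R}_\lambda\X^k$ is isometric, as a metric flow pair, to the metric flow pair of $g_{\lambda_k}$ based at $(\Phi_{\log\lambda}(q),0)$ with the pushed-forward singular-time heat kernel. But to conclude $\mathcal{R}_\lambda\X^\infty\cong\X^\infty$ you would need this latter sequence to $\F$-converge to the \emph{same} limit $\X^\infty$ as the sequence based at $(q,0)$. This amounts to showing that the $\F$-limit is independent of the basepoint along the integral curve of $\nabla f$ through $q$, i.e.\ that the singular-time heat kernel measures $\nu_{(q,1);s}$ and $\nu_{(\Phi_{\log\lambda}(q),1);s}$ become close in $d_{W_1}^{g(s)}$ at rate $o(\sqrt{1-s})$ as $s\nearrow 1$. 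Without curvature assumptions this is not available; even under a Type~I scalar bound one only gets $O(\sqrt{1-s})$, which after rescaling yields a bounded but non-vanishing discrepancy. Equivalently, your argument implicitly uses uniqueness of the tangent flow at $(q,0)$, which is not established here.

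The paper bypasses this entirely by a much softer observation: after time-shifting, each $(\X^k,(\mu^k_t)_{t\le 0})$ is a parabolic rescaling of the single metric flow pair $(\X,(\mu_t)_{t\le 0})$ associated with the pointed Ricci flow $(M,g(1+t),(q,0))_{t<0}$, so by definition $(\X^\infty,(\mu^\infty_t)_{t\le 0})$ is a \emph{tangent flow} of $(\X,(\mu_t)_{t\le 0})$ at $\x_0=(q,0)$. Now apply Theorems~\ref{scaling-compactness-thm-intro} and~\ref{good-convergence-intro} to the \emph{constant} sequence $(\X,(\mu_t)_{t\le 0})_{k\in\N}$; the limit is $(\X,(\mu_t)_{t\le 0})$ itself (via the trivial correspondence and the uniqueness clause), and Point~(2) of Theorem~\ref{good-convergence-intro} together with Proposition~\ref{metric-soliton-structure} then says every tangent flow of $\X$ is a metric soliton with the listed structure. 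This gives the metric soliton property of $\X^\infty$ directly, with no need to compare heat kernels at different basepoints. Continuity then follows from Proposition~\ref{ms-cont}.
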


In order to obtain more information about the geometry of the limit space, we are forced to impose a curvature condition on the Ricci shrinker \emph{at the point $q$}.

\begin{defn}\label{def.TypeIscalar}
Let $(M,g(t))_{t<1}$ be the Ricci flow associated with a Ricci shrinker $(M,g,f)$. We say that \emph{the scalar curvature at a point $q\in M$ is of Type~I} if there exists a constant $L>0$ such that 
\begin{equation*}
S(q,t)\leq L/(1-t)
\end{equation*}
for every $t\in [0,1)$.
\end{defn}
 
In order to understand our next result, recall that any Ricci shrinker $(M,g,f)$ has a natural basepoint $p\in M$, i.e.~a point where $f$ attains its minimum, and that $(\psi_{t})_{t<1}$ denotes the family of diffeomorphisms of $M$ generated by the vector field $(1-t)^{-1}\nabla f$; see Section \ref{shrinker-preliminaries-section}.

By self-similarity, a blow-up sequence of a Ricci flow associated with a Ricci shrinker pointed at the basepoint $q=p$ of the shrinker will simply converge to the Ricci shrinker itself. Similarly, if $d_{g}(p,\psi_{t}(q))$ remains bounded as $t \nearrow 1$, then $d_{g(t)}(p,q)\to 0$ by \eqref{change-of-distance}, and one can show that the blow-up limit based at such $q$ is again simply the Ricci shrinker we started with. Note that in these cases the Type~I assumption on the scalar curvature at $q$ is trivially satisfied (since the scalar curvature is uniformly bounded in terms of distance from $p$, see \cite{hm}). The only interesting case is therefore the one where $d_{g}(p,\psi_{t}(q))\to \infty$ (and in this case, the Type~I scalar curvature bound from Definition \ref{def.TypeIscalar} is a non-trivial condition about the growth at infinity of the scalar curvature of the shrinker along the integral curve of $\nabla f$ emanating from $q$).

We remark that all Ricci flows associated to any of the currently known Ricci shrinkers satisfy a Type I bound on the full curvature tensor at all of their points, so assuming only a scalar curvature bound at only one point is a relatively mild assumption.
 
\begin{theorem}[Blow-up Limit Splits a Line]\label{thm-splitting-intro} 
Under the same assumptions as in Theorem \ref{thm-blow-up-intro} suppose, in addition, that $d_{g}(p,\psi_{t}(q))\to \infty$ as $t\nearrow 1$ and that the scalar curvature at $q\in M$ is of Type~I.  Then the limit metric flow $\X^{\infty}_{<0}$ splits a line, i.e.~there exists a metric flow $\widetilde{\X}$ over $(-\infty,0)$ such that $\X^{\infty}_{<0}\simeq \widetilde{\X}\times \R$. In particular, the regular and singular parts also split a line, $\reg^{\infty}\simeq \widetilde{\reg}\times \R$ and $\s^{\infty}\simeq \widetilde{\s}\times \R$.
\end{theorem}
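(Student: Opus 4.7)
The strategy is to construct a sequence of approximate splitting functions on the blow-up flows whose $\F$-limit is a parallel, unit-gradient function on $\X^\infty$, and then to conclude via a de Rham type rigidity argument.

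Using the self-similarity $g(\tau)=(1-\tau)\psi_\tau^{\ast} g$, the blow-up metrics become $g_k(t)=-t\cdot \psi_{1+t/\lambda_k}^{\ast} g$ for $t\in[-\lambda_k,0)$. The sequence thus probes $(M,g)$ along the integral curve $\tau\mapsto \psi_\tau(q)$, which by hypothesis escapes to infinity. I would consider the shifted, normalized potential
\begin{equation*}
\ell_k(y,t) := \frac{f(\psi_{1+t/\lambda_k}(y))-f(\psi_{1+t/\lambda_k}(q))}{N_k(t)},
\end{equation*}
where $N_k(t):=\sqrt{-t}\cdot|\nabla f|_g(\psi_{1+t/\lambda_k}(q))$ is designed to give unit gradient at the basepoint. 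The shrinker identity $|\nabla f|_g^2+S_g=f+\mathrm{const.}$, together with the Type~I hypothesis (equivalent by self-similarity to $S_g(\psi_\tau(q))\le L$), gives $|\nabla f|_g(\psi_\tau(q))\sim \sqrt{f(\psi_\tau(q))}\to\infty$, so $N_k(t)\to\infty$. A direct calculation in the rescaled metric yields $|\nabla\ell_k|_{g_k(t)}^2(q)=1$, while $\nabla^2 f=g/2-\Ric$, combined with Shi-type derivative estimates seeded by the scalar bound, gives $|\nabla^2\ell_k|_{g_k(t)}(q)\to 0$.

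By Theorems~\ref{scaling-compactness-thm-intro} and~\ref{thm-blow-up-intro}, the convergence on the regular part $\reg^\infty$ is smooth, so the $\ell_k$ subconverge to a smooth $\ell_\infty\colon\reg^\infty\to\R$ satisfying $|\nabla\ell_\infty|^2\equiv 1$ and $\nabla^2\ell_\infty\equiv 0$ on each regular time slice $\reg_t^\infty$; thus $\ell_\infty$ is a parallel, unit-gradient function on each slice. A parallel unit-gradient function yields a de Rham splitting time-slice by time-slice; the required connectedness comes from $\dim_{\M^\ast}\s^\infty\le n-2$ together with the density of $\reg_t^\infty$ in $\X_t^\infty$ from Theorem~\ref{good-convergence-intro}. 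Compatibility across times then follows from the metric soliton scaling invariance of Proposition~\ref{metric-soliton-structure}, and $\mu_t^\infty(\s_t^\infty)=0$ combined with the reproduction formula gives $\s^\infty\simeq\widetilde{\s}\times\R$, whence $\X^\infty_{<0}\simeq\widetilde{\X}\times\R$.

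The main obstacle is this last step: lifting a pointwise, time-slice-wise de Rham splitting to a genuine metric flow splitting that respects the conjugate heat kernel structure. Concretely, one must show that the $\R$-translations generated by $\ell_\infty$ on each regular slice fit together into a single 1-parameter group of metric flow isometries of $\X^\infty_{<0}$, which should follow from the $\F$-continuity of the correspondence $\C$ in Theorem~\ref{scaling-compactness-thm-intro} together with the explicit approximating translations $(y,t)\mapsto (\psi_s(y),t)$ available on the pre-limit flows.
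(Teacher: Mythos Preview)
Your overall strategy is correct and matches the paper's: build normalized potentials on the blow-up flows that converge on $\reg^\infty$ to a function with $|\nabla f_\infty|\equiv 1$, $\nabla^2 f_\infty\equiv 0$, $\partial_{\mathfrak{t}} f_\infty\equiv 0$, then invoke a rigidity theorem. However, there is a genuine gap in how you pass to the limit, and your final step is handled differently in the paper.

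The gap is in the sentence ``the convergence on the regular part $\reg^\infty$ is smooth, so the $\ell_k$ subconverge''. Smooth convergence on $\reg^\infty$ is realised through diffeomorphisms $\phi_k\colon U_k\to V_k\subset M\times(-\infty,0)$, and a priori you have no control on where $V_k$ sits relative to the basepoint $q$. To get a $C^0$ bound on $\phi_k^\ast\ell_k$ over a compact $K\subset\reg^\infty$ you must first show that $\phi_k(K)$ stays at bounded $\bar g_k$-distance from $(q,0)$; otherwise the shift $f(\psi_\tau(\cdot))-f(\psi_\tau(q))$ could blow up. The paper achieves this via a key preliminary step you omit: under the Type~I scalar bound, Perelman's Harnack inequality forces $(q,t)$ to be an $H$-center of $(q,1)$ for all $t$ (Proposition~\ref{q-is-center}). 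This allows one to take the approximating basepoints $z_{k,i_k}$ equal to $q$, after which a measure-concentration argument (your proposal lacks any analogue of this) shows $d_{\bar g_k(\tau)}(q,\phi_k(\y))$ is uniformly bounded for $\y\in\reg^\infty$. Only then can ODE arguments along curves in $\reg^\infty$ produce the required $C^m$ bounds. Relatedly, your appeal to ``Shi-type derivative estimates seeded by the scalar bound'' is not available here: there are no global curvature bounds, so Shi estimates do not apply; the Hessian smallness comes instead from the algebraic identity $\nabla^2 f_k \sim (c_k/\lambda_k)(g-\tau\Ric)$, the fact that $c_k/\lambda_k\to 0$ (which uses $d_g(p,\psi_t(q))\to\infty$), and local curvature bounds on $\reg^\infty$ from smooth convergence. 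A further technical point: your normalisation $N_k(t)$ is time-dependent, which complicates the time-derivative estimate; the paper uses a single constant $c_k$.

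For the final step, the paper does not attempt a time-slice-by-time-slice de Rham argument followed by a lifting procedure. Instead, once one has $|\nabla f_\infty|\equiv 1$, $\nabla^2 f_\infty\equiv 0$, $\partial_{\mathfrak{t}} f_\infty\equiv 0$ on $\reg^\infty$, one invokes directly Bamler's structural result \cite[Theorem~15.50]{Ba20_structure}, which produces the metric flow splitting $\X^\infty_{<0}\simeq\widetilde\X\times\R$ (including the singular set) in one stroke. Your sketch of lifting via ``approximating translations'' is not how this is done and would be difficult to make rigorous in the $\F$-convergence framework.
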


In the four-dimensional case we obtain the best possible convergence and structure statements. 

\begin{theorem}[Four-Dimensional Blow-up Limits]\label{thm-splitting-four-intro} 
Let $(M,g,f)$ be a four-dimensional Ricci shrinker with basepoint $p\in M$, and let $q\in M$ be a point with $d_{g}(p,\psi_{t}(q))\to \infty$ as $t\nearrow~1$ satisfying a Type~I scalar curvature bound. Given any blow-up sequence $(M,g_{k}(t),q)_{k\in \N}$, let 
$(\X^{\infty},(\mu^{\infty}_{t})_{t\leq 0})$ be the limit metric flow pair obtained in Theorem \ref{thm-blow-up-intro}. Then the following hold.
\begin{enumerate}
\item $\X^{\infty}_{<0}$ is smooth, meaning that its singular set $\s^{\infty}$ is empty. 
In particular, the singular space $(X,d)$ from Proposition \ref{metric-soliton-structure} is a smooth four-dimensional Ricci shrinker
that splits a line as $(N\times \R,g_{N}+dt^{2})$, where $N$ is a smooth three-dimensional Ricci shrinker.  
\item If $\lambda_{k}\nearrow \infty$ are the scales of the given blow-up sequence, or more precisely of the converging subsequence, 
letting $x_{k}=\psi_{1-1/\lambda_{k}}(q)\in M$, the sequence $(M,g,x_{k})_{k\in \N}$ converges, 
in the pointed smooth Cheeger-Gromov sense, to $(N\times \R, g_{N}+dt^{2},x_{\infty})$. 
\item The limit Ricci shrinker $X$ is isometric to flat $\R^{4}$ if and only if $S(x_{k})\to 0$ as $k\to \infty$.
\end{enumerate}
\end{theorem}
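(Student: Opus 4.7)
The plan is to establish the three claims in order. For (1), I start from the line splitting $\X^{\infty}_{<0}\simeq\widetilde{\X}\times\R$ of Theorem \ref{thm-splitting-intro}, which forces $\s^{\infty}\simeq\widetilde{\s}\times\R$. The input specific to $n=4$ is the sharp codimension-four estimate $\dim_{\M^{\ast}}(\s^{\infty})\le n-4=0$, i.e.\ the shrinker-setting analogue, transported via the $\F$-convergence machinery of \cite{heatkernel2}, of Bamler's 4D codimension-four structure theorem from \cite{Ba20_structure}. If $\widetilde{\s}$ were nonempty, any $\widetilde{x}\in\widetilde{\s}_{t}$ would generate the spatial line $\{\widetilde{x}\}\times\R\subseteq\s^{\infty}_{t}$ of positive $\ast$-Minkowski dimension, a contradiction. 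Hence $\s^{\infty}=\emptyset$, so $\X^{\infty}_{<0}=\reg^{\infty}$ carries a smooth four-dimensional Ricci flow spacetime structure; combined with the metric-soliton property from Proposition \ref{metric-soliton-structure}, the singular space $(X,d)$ is a smooth complete four-dimensional Ricci shrinker, and the line splitting descends to $(X,d)\simeq(N\times\R,g_{N}+dt^{2})$ with $N$ a smooth three-dimensional Ricci shrinker.

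For (2), I exploit the self-similar identity $g(t)=(1-t)\psi_{t}^{\ast}g$ of the Ricci flow associated with the shrinker. Writing $t_{k}=1-1/\lambda_{k}$ and evaluating \eqref{metric-blow-up-intro} at $t=-1$ gives
\begin{equation*}
g_{k}(-1)=\lambda_{k}\,g(1-1/\lambda_{k})=\psi_{t_{k}}^{\ast}g,
\end{equation*}
so $\psi_{t_{k}}$ realises an isometry between $(M,g_{k}(-1),q)$ and $(M,g,x_{k})$. Propositions \ref{prop-convergence} and \ref{good-convergence-prop} give smooth convergence of the approximating sequence to $(\X^{\infty},(\mu^{\infty}_{t}))$ on $\reg^{\infty}$, which by (1) is all of $\X^{\infty}_{<0}$. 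Restricting to the time $t=-1$ slice yields smooth pointed Cheeger-Gromov convergence of the approximating shrinkers at time $-1$ to $(X,d)=(N\times\R,g_{N}+dt^{2},x_{\infty})$. A standard bootstrap using Hamilton's smooth compactness, the uniform curvature bounds available once $\s^{\infty}=\emptyset$, and the $\F$-closeness of the original and approximating sequences then transfers the smooth convergence to $(M,g_{k}(-1),q)$, hence via the isometries $\psi_{t_{k}}$ to the desired convergence $(M,g,x_{k})\to(N\times\R,g_{N}+dt^{2},x_{\infty})$.

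For (3), smooth Cheeger-Gromov convergence gives $S(x_{k})\to S_{N}(\pi(x_{\infty}))$, where $\pi\colon N\times\R\to N$ is the projection. By Chen's theorem, $S\ge 0$ on any complete Ricci shrinker, so if $S(x_{k})\to 0$ then $S_{N}$ attains its minimum $0$ at an interior point; the strong maximum principle applied to $\partial_{t}S=\Delta S+2|\Ric|^{2}$ along the Ricci flow associated with $N$ then forces $S_{N}\equiv 0$ and hence $\Ric_{N}\equiv 0$. Rigidity of Ricci-flat shrinkers gives $N\cong\R^{3}$, and therefore $X\cong\R^{4}$. The converse is immediate since flat $\R^{4}$ has $S\equiv 0$.

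The main obstacle is (1): the generic bound $\dim_{\M^{\ast}}(\s^{\infty})\le n-2$ from Theorem \ref{good-convergence-intro} is not enough to rule out singular points, so one must invoke the sharp codimension-four phenomenon. Verifying or citing this improvement in the present non-compact shrinker blow-up setting, and checking its compatibility with the splitting direction, is the genuine technical content; parts (2) and (3) then follow as essentially formal consequences of (1).
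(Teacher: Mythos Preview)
For (1), the paper takes a slightly different route: rather than a codimension-four $\ast$-Minkowski bound on $\s^{\infty}$ (which is not stated in that form here; Theorem~\ref{good-convergence-intro} only gives $\dim_{\M^{\ast}}\s^{\infty}\le n-2$), it cites the four-dimensional orbifold regularity result from \cite[Theorem~2.46]{Ba20_structure} and its shrinker analogue \cite[Theorem~6.12]{heatkernel2}, which says each time-slice $\X^{\infty}_{t}$ is an orbifold shrinker with \emph{isolated} singularities. Since $\s^{\infty}_{t}=\widetilde{\s}_{t}\times\R$ cannot be discrete unless $\widetilde{\s}_{t}=\emptyset$, the conclusion follows. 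Your dimension-counting argument can be salvaged by invoking $\dim_{\M}\s_{X}\le n-4=0$ from Proposition~\ref{metric-soliton-structure}(7) on the spatial model instead; a line in $\s_{X}$ then gives the desired contradiction.

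Part (2) has a genuine gap. The identity $g_{k}(-1)=\psi_{t_{k}}^{\ast}g$ is correct, but Proposition~\ref{good-convergence-prop} only yields smooth convergence for the \emph{approximating} sequence with shifted metrics $\overline{g}_{k}(t)=g_{k}(t+t_{i_{k}})$, and that convergence is not a priori pointed at $q$. Two non-trivial steps are needed and your ``standard bootstrap'' supplies neither. First, one must pin the basepoint: the Type~I bound at $q$ makes $(q,t)$ an $H$-center for every $t$ (Proposition~\ref{q-is-center}), so Proposition~\ref{conv-centers} gives pointed smooth convergence $(M,\overline{g}_{k}(-1),q)\to(X,g_{X},q_{\infty})$. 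Second, one must pass from $\overline{g}_{k}(-1)=(1-t_{i_{k}})\psi^{\ast}_{s_{k}+\vartheta_{k}}g$ (where $s_{k}=1-1/\lambda_{k}$, $\vartheta_{k}=t_{i_{k}}/\lambda_{k}$) back to $\psi^{\ast}_{s_{k}}g$; the paper does this via an explicit estimate of $d_{g}(\psi_{s_{k}+\vartheta_{k}}(q),\psi_{s_{k}}(q))$ along the integral curve of $\nabla f$, crucially using the freedom (Remark~\ref{double-conv-rem}) to choose $t_{i_{k}}$ decaying fast enough relative to $\sqrt{f(\psi_{s_{k}}(q))-\mu_{g}}$. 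Hamilton's compactness cannot replace this: without global curvature bounds on the shrinker there is no a priori curvature control on $(M,g_{k}(-1))$ near $q$ to feed into it. Part (3) is correct and matches the paper's appeal to \cite{rps}.
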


It is well-known that three-dimensional Ricci shrinkers are fully classified. In particular, a smooth three-dimensional Ricci shrinker $N$ is either flat $\R^3$, the round cylinder $\mathbb{S}^2 \times \R$ or its $\Z_{2}$ quotient, or a quotient of the round three-sphere $\mathbb{S}^3/\Gamma$.

Now we discuss how the preceding setting can be applied to study the geometry of ends of Ricci shrinkers. Understanding the properties of the ends of a Ricci shrinker, such as their possible geometries and their number, has recently received a lot of attention; see for example \cite{BB24, msw, mw15, mw19, mw22}.
In particular, in \cite[Theorem~1.5]{mw19}, Munteanu and Wang proved the following. Let $(M,g,f)$ be a four-dimensional Ricci shrinker with (globally) bounded scalar curvature, and let $E\subseteq M$ be an end with scalar curvature bounded from below away from zero. Then for any sequence of points $(x_{k})_{k\in \N}$ in $E$ going to infinity along an integral curve of $\nabla f$, the sequence $(M,g,x_{k})_{k\in \N}$ converges, in the smooth Cheeger-Gromov sense, to one of the following possible limits: $\R^{2}\times \mathbb{S}^{2}$ or its $\Z_{2}$ quotient, or a round cylinder $\R\times \mathbb{S}^{3}/\Gamma$. Moreover, in the latter case, $E$ is smoothly asymptotic to 
$\R\times \mathbb{S}^{3}/\Gamma$.  This result relies on \cite{Na10, EMT11, MM15}, since it reduces the convergence of 
$(M,g,x_{k})_{k\in \N}$ to the convergence of an adequate blow-up sequence $(M,g_{k}(t),q)_{k\in \N}$. Note that this uses the fact that the full curvature tensor of a four-dimensional soliton is controlled by its scalar curvature, and thus the global scalar curvature bound assumption implies that the shrinker in question is globally of Type~I. It is natural to try to extend their theorem, removing or weakening the global scalar curvature condition, using $\mathbb{F}$-convergence. In this direction, we obtain the following simple consequence of Theorem \ref{thm-splitting-four-intro}.

\begin{theorem}[Ends Split a Line]\label{ends-four-dim}
Let $(M,g,f)$ be a four-dimensional Ricci shrinker and let $q\in M$ be a point with scalar curvature of Type~I and with integral curve of $\nabla f$ starting at $q$ going to infinity along an end $E$. Given any sequence of points $(x_{k})_{k\in \N}$ going to infinity along this integral curve, i.e.~with $d_{g}(p,x_{k})\to \infty$ as $k\to \infty$, then, after passing to a subsequence, we have
 \begin{equation*}
 (M,g,x_{k})_{k\in \N}\longrightarrow (X,g_{X},x_{\infty})
 \end{equation*}
in the pointed smooth Cheeger-Gromov sense, where $X$ is a smooth four-dimensional Ricci shrinker splitting isometrically as $(N\times \R,g_{N} + dt^{2})$, with $N$ a smooth three-dimensional Ricci shrinker. In particular, $X$ is isometric to $\R^{4}$, $\R^{2}\times \mathbb{S}^{2}$ or its $\Z_{2}$ quotient, or $\R\times \mathbb{S}^{3}/\Gamma$, each with their canonical metric. Finally, $X$ is isometric to flat $\R^{4}$ if and only if $S(x_{k})\to 0$ as $k\to \infty$.
\end{theorem}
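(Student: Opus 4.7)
The plan is to reduce the statement directly to Theorem \ref{thm-splitting-four-intro}. The starting observation is that the vector field $(1-t)^{-1}\nabla f$ generating the diffeomorphisms $(\psi_t)_{t<1}$ is merely a time-dependent rescaling of $\nabla f$, so the orbit $\{\psi_t(q) : t\in[0,1)\}$ coincides as a set with the forward integral curve of $\nabla f$ starting at $q$. Since $(x_k)$ lies on this curve and $d_g(p,x_k)\to\infty$, after discarding finitely many terms and passing to a subsequence to ensure monotonicity we can write $x_k=\psi_{t_k}(q)$ with $t_k\nearrow 1$; setting $\lambda_k := 1/(1-t_k)$ then gives $\lambda_k\nearrow\infty$ and $x_k=\psi_{1-1/\lambda_k}(q)$.

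I would then apply Theorem \ref{thm-splitting-four-intro} to the blow-up sequence $(M,g_k(t),q)_{k\in\N}$ with $g_k(t) = \lambda_k g(1+t/\lambda_k)$. Both hypotheses of that theorem are satisfied: $q$ has Type~I scalar curvature by assumption, and $d_g(p,\psi_t(q))\to\infty$ as $t\nearrow 1$ because the forward integral curve of $\nabla f$ starting at $q$ goes to infinity along the end $E$ (so it exits every compact subset of $M$). Along the subsequence provided by that theorem, conclusion~(2) gives
\[
(M,g,x_k)\longrightarrow (N\times\R,\,g_N+dt^2,\,x_\infty)
\]
in the pointed smooth Cheeger-Gromov sense, where $N$ is a smooth three-dimensional Ricci shrinker. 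The full classification of smooth three-dimensional Ricci shrinkers then forces $N$ to be one of $\R^3$, the round cylinder $\mathbb{S}^2\times\R$ or its $\Z_2$-quotient, or a spherical space form $\mathbb{S}^3/\Gamma$; taking the product with $\R$ and reordering factors where appropriate yields the four possible limits $X$ listed in the statement. Finally, the flatness criterion is exactly conclusion~(3) of Theorem \ref{thm-splitting-four-intro}.

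This theorem is essentially a corollary of Theorem \ref{thm-splitting-four-intro}, so I do not expect any genuine obstacle. The only minor point demanding care is the identification of the given sequence $(x_k)$ with points of the form $\psi_{1-1/\lambda_k}(q)$ together with the verification that $\lambda_k\nearrow\infty$; both follow from $d_g(p,x_k)\to\infty$ and the fact that $\psi_t$ is a diffeomorphism of $M$ for each fixed $t<1$. Once this identification is in place, everything else is an immediate application of the previously established results.
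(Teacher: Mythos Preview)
Your proposal is correct and follows essentially the same approach as the paper: write $x_k=\psi_{t_k}(q)$, argue that $t_k\to 1$, set $\lambda_k=(1-t_k)^{-1}$, and invoke Theorem~\ref{thm-splitting-four-intro}. The only difference is in justifying $t_k\to 1$: the paper uses the Gronwall-type bound $f(x_k)-\mu_g\le (1-t_k)^{-1}(f(q)-\mu_g)$ coming from $|\nabla f|^2\le f-\mu_g$, whereas you appeal to a compactness argument---note, however, that the relevant fact is the continuity of $t\mapsto\psi_t(q)$ on compact subintervals of $[0,1)$ (so that if $t_k$ stayed bounded away from $1$ the points $x_k$ would remain in a compact set), not merely that each $\psi_t$ is a diffeomorphism.
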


We remark that the scalar curvature Type~I assumption at $q$ is satisfied if $S$ is bounded along the integral curve of $\nabla f$ starting at $q$. In particular, we do not need a global scalar curvature bound, but only a bound along \emph{one} integral curve going to infinity to obtain the same conclusion as in \cite[Theorem~1.5]{mw19}. Nevertheless, it would of course be desirable to remove such an assumption completely.

The article is organised as follows. Section \ref{shrinker-preliminaries-section} starts with a review of the basic theory on Ricci shrinkers, including heat kernel functions. We then prove Theorem \ref{thm-sing-heat-kernel-intro} together with some other properties of the heat kernel based at the singular time. In Section \ref{shrinker-mf-section}, we first study the probability measure associated with the singular time heat kernel and then use this to associate a metric flow pair to a Ricci shrinker, proving also some of its basic properties. In Section \ref{conv-result-section} we prove Theorem \ref{scaling-compactness-thm-intro}, while Section \ref{reg-results-section} contains the proofs of Theorem \ref{good-convergence-intro} and of the other structural and convergence results mentioned above. In Section \ref{blow-up-section}, we apply the previous results to the case of blow-up sequences, thus proving Theorem \ref{thm-blow-up-intro} and Theorem \ref{thm-splitting-intro}; moreover, we study the four-dimensional case, thus obtaining Theorem \ref{thm-splitting-four-intro} and Theorem \ref{ends-four-dim}. 
Finally, Appendix \ref{schauder-appendix} contains a brief summary of Schauder estimates for the conjugate heat equation needed in Section \ref{shrinker-preliminaries-section}, while in Appendix \ref{mms-appendix} we collect some complementary facts regarding metric measure spaces and metric flows, some of which might be of independent interest.  

\textbf{Acknowledgements.} AB has been fully supported by a Phd studentship from SISSA.
RB has been partially supported by the Italian PRIN project ``Differential-geometric aspects of manifolds via global analysis'' (No.~20225J97H5).

\section{Singular Time Heat Kernel on Ricci Shrinkers}\label{shrinker-preliminaries-section}

Let us start by collecting some preliminary facts about Ricci shrinkers. Given an $n$-dimensional Ricci shrinker $(M,g,f)$, it is well known that the quantity 
\begin{equation*}
c(g,f)=S+|\nabla f|^{2}-f,
\end{equation*}
where $S$ is the scalar curvature of $g$, is constant on $M$; see for example \cite{hm}. Moreover, $S$ is always nonnegative and only vanishes at some point if $M$ is isometric to $\R^{n}$ \cite{rps}. The potential $f$ always attains its minimum at some point $p\in M$, which we regard as the basepoint of the shrinker,
and its growth is controlled by the distance from $p$; more precisely, one has
\begin{equation}\label{fgrowth}
\frac{1}{4}(d(x,p)-5n)_{+}^{2}\leq f(x)+c(g,f) \leq \frac{1}{4}(d(x,p)+\sqrt{2n})^{2},
\end{equation}
where  $a_{+}=\max\{a,0\}$.

Since metric balls centered at the basepoint of the shrinker satisfy an Euclidean volume growth condition, estimate \eqref{fgrowth} implies that the weighted volume of $M$ is finite, i.e.~$\int_{M}e^{-f}d V_{g}<\infty$, and therefore we can normalise $f$ to obtain
\begin{equation}\label{potential-normalisation}
\int_{M}e^{-f}dV_{g}=(4\pi)^{\frac{n}{2}}.
\end{equation}
Note that, under the normalisation \eqref{potential-normalisation}, the constant $c(g,f)$ becomes independent of $f$, and it will be denoted by $c(g)$, although $f$ itself is not uniquely determined. Moreover, $c(g)$ coincides with minus the Perelman entropy of $(M,g)$:
\begin{equation*}
-c(g)=\mu_{g}=\int_{M}(|\nabla f|^{2}+S+f-n)(4\pi)^{-\frac{n}{2}}e^{-f}dV_{g} > - \infty.
\end{equation*} 
For the proofs of the above statements we refer again to \cite{hm}.

As mentioned in the introduction, it is possible to associate a Ricci flow to any smooth Ricci shrinker. Indeed, since $(M,g)$ is complete, the time dependent vector field $\frac{1}{\tau}\nabla f$, where $\tau=1-t$ and $t\in (-\infty,1)$, is complete, meaning that it generates a family of diffeomorphisms $(\psi_{t})_{t\in (-\infty,1)}$ satisfying
\begin{equation*}
\left\{
\begin{aligned}
\partial_{t}\psi_{t}(x) &=\frac{1}{\tau}\nabla f(\psi_{t}(x)),\\
\psi_{0}(x) &=x,
\end{aligned}
\right.
\end{equation*}
for every $x\in M$ and $t\in (-\infty,1)$; see \cite{zhang}. Defining the family of metrics $g(t)=\tau \psi_{t}^{\ast}g$, then it is standard to prove that $(M,g(t))_{t\in (-\infty,1)}$ is an ancient Ricci flow with $g(0)=g$.

\begin{defn} 
The ancient Ricci flow $(M,g(t))_{t<1}$ constructed in this way is called the \emph{Ricci flow associated with the Ricci shrinker} $(M,g,f)$. 
\end{defn}
 
Denoting by $f(t)$ the time dependent potential $f\circ \psi_{t}$, a version of equation \eqref{shrinker-eq} holds:
\begin{equation*}
\Ric_{g(t)}  + \nabla^{2}_{g(t)}f(t) =\frac{1}{2\tau}g(t).
\end{equation*}
From the definition of $f(t)$ it is easy to check that 
\begin{equation*}
\tau S+\tau |\nabla f(t)|_{g(t)}^{2}-f(t) \equiv c(g)
\end{equation*}
is constant in both the time and space variables; moreover, the normalisation
\begin{equation*}
\int_{M}e^{-f(t)}dV_{t}=(4\pi\tau)^{\frac{n}{2}}
\end{equation*}
holds for every $t\in (-\infty,1)$. Here and in the following $dV_{t}$ stands for the Riemannian volume element induced by $g(t)$. As one expects, also the Perelman entropy of $(M,g(t))$,
\begin{equation*}
\mu(g(t),f(t))=\int_{M}(\tau |\nabla_{g(t)} f(t)|_{g(t)}^{2}+\tau S+f(t)-n)(4\pi \tau)^{-\frac{n}{2}}e^{-f(t)}dV_{t},
\end{equation*}
coincides with $\mu_{g}$ and so is independent of time.

Since $f(t)$ satisfies the evolution equation
\begin{equation*}
\partial_{t} f(t)=|\nabla_{g(t)} f(t)|_{g(t)}^{2},
\end{equation*}
if we define $F(x,t)=\tau f(x,t)-\tau \mu_{g}$ we obtain the following relations 
\begin{align}
\label{potential-time-deriv} \partial_{t}F=\tau |\nabla_{g}(t) f|_{g(t)}^{2}-f +\mu_{g} &= -\tau S,\\
\label{potential-curvature-eq} \tau^{2}S + |\nabla_{g(t)} F|_{g(t)}^{2}&=F.
\end{align}

We also note that since the metrics $g(t)$ are self-similar, their curvature tensors and distances can be expressed in term of $g=g(0)$ only. For example, the $(1,3)$-curvature tensor satisfies $\Rm(x,t)=\psi_{t}^{\ast}\Rm(x)$, and we therefore obtain
\begin{align}
\label{riemann-tensor-time} |\Rm(x,t)| &=(1-t)^{-1}|\Rm(\psi_{t}(x))|,\\
\label{scalar-curvature-time} S(x,t) &=(1-t)^{-1}S(\psi_{t}(x)),
\end{align}
while for the distances we have
\begin{align}
d_{t}(x,y)&=\sqrt{1-t}\ d(\psi_{t}(x),\psi_{t}(y)),\label{change-of-distance}\\
B_{t}(x,r) &=\psi_{t}^{-1}\left(B(\psi_{t}(x),r/\sqrt{1-t})\right). \label{change-of-balls}
\end{align}
In \eqref{riemann-tensor-time}--\eqref{change-of-balls}, the quantities on the left hand side refer to the evolving metric $g(t)$, while the ones on the right hand side refer to the fixed metric $g$. 

From \eqref{riemann-tensor-time} it follows that the Ricci flow $(M,g(t))_{t<1}$ is of Type~I if and only if $\sup_{M}|\Rm|<\infty$. Moreover, with the exception of flat $\R^n$, the basepoint $p$ is always a singular point in the sense of \cite{EMT11}, since it holds $|\Rm(p,t)|=(1-t)^{-1}|\Rm(p)|$, and $T=1$ is always a singular time, i.e.~the flow can never be smoothly extended past this time. 
     
For the convenience of the reader, we recall the exact definitions of parabolic rescaling; we specialise to the case of Ricci shrinkers, but the definition is the same for a general Ricci flow. 

Let $(M,g(t))_{t<1}$ be the Ricci flow associated with a Ricci shrinker $(M,g,f)$.
If $\lambda>0$ is a positive number and $t_{0}\leq 1$, the Ricci flow $(M,g_{-t_{0},\lambda}(t))_{t <T}$, where
\begin{equation}\label{eq.parresc}
g_{-t_{0},\lambda}(t)=\lambda g(t_{0}+t/\lambda)
\end{equation}
and $T=\lambda(1-t_{0})$, is called a parabolic rescaling of $(M,g(t))_{t<1}$. 

This operation produces a new Ricci flow $(M,g(t))_{t< T}$, where now $T<\infty$ is the singular time. Note that the above includes also the case $\lambda=1$, which corresponds to a time-shift; if in addition $t_{0}=0$ we are not changing the flow at all. 

\begin{defn}\label{ricci-flow-induced} Every Ricci flow obtained by \eqref{eq.parresc} is called a \emph{Ricci flow induced by the Ricci shrinker} $(M,g,f)$. 
\end{defn}    
  
Given a point $q\in M$ and a sequence of positive numbers $\lambda_{k}\nearrow \infty$, the corresponding sequence of parabolically rescaled pointed Ricci flows $(M,g_{k}(t),q)_{t<T_{k}}$ is called a blow-up sequence of $M$ based at $q$ with scales $\lambda_{k}$. A particularly important case for us is when $t_0=1$ is the singular time.
     
\subsection{Heat Kernel at Regular Times on a Ricci Shrinker}\label{heat-kernel-section}

Let $(M,g(t))_{t\in [0,T]}$ be a Ricci flow over an $n$-dimensional manifold $M$. There are two particularly important partial differential equations one can consider, namely the heat equation
\begin{equation}
\partial_{t}u=\Delta u  \label{heat-eq}
\end{equation}
and the conjugate heat equation
\begin{equation}
\partial_{t}u=-\Delta u + S u. \label{conj-heat-eq}
\end{equation}
These two equations are well understood, especially in the static case, and a lot of effort has been devoted to their study. Note that in our setting, \eqref{heat-eq} and \eqref{conj-heat-eq} are coupled with the Ricci flow, in particular $\Delta = \Delta_{g(t)}$ is a time-dependent operator.

As usual, we introduce the following operators acting on smooth functions:
\begin{equation*}
\square = \partial_{t}-\Delta, \quad \square^{\ast}=\partial_{t}+\Delta -S,
\end{equation*}
that allow us to write \eqref{heat-eq} and \eqref{conj-heat-eq} more concisely as $\square u=0$ and $\square^{\ast}u=0$, respectively. 

One of the main results in the field regards the existence of a so-called heat kernel function.
It is well known that such a function exists and is unique when $M$ is a compact manifold, or if $M$ is complete with uniformly bounded scalar curvature; see \cite[Chapter~24]{CCGG-part3} for the relevant definitions and proofs. In the general setting, i.e.~without any curvature assumptions, a similar result is known to hold for Ricci shrinkers, which is the case we are interested in.

Let as usual $(M,g,f)$ be a Ricci shrinker, with associated Ricci flow $(M,g(t))_{t<1}$, and denote
\begin{equation*}
\mathcal{D}=\{(x,t, y,s)\colon x,y \in M, -\infty<s<t<1\}.
\end{equation*}
The following holds; see \cite[Theorem~7]{heatkernel}.

\begin{prop}[Existence and Properties of Heat Kernel on Ricci Shrinkers]\label{heat-kernel-thm}
There exists a positive smooth function $H\colon \mathcal{D}\to \R$
satisfying the following conditions.
\begin{enumerate}
\item For every $y\in M$ and $s<1$, the function $H(\cdot,\cdot \hspace{1pt} ; y,s)$ solves the heat equation \eqref{heat-eq}
\begin{equation*}
\square_{x,t}H(\cdot,\cdot \hspace{1pt} ; y,s)=0
\end{equation*}
on $M\times (s,1)$.
\item For every $x\in M$ and $t<1$, the function $H(x,t;\cdot,\cdot)$ solves the conjugate heat equation \eqref{conj-heat-eq}
\begin{equation*}
\square_{y,s}^{\ast}H(x,t;\cdot,\cdot)=0
\end{equation*}
on $M\times (-\infty,t)$.
\item $H$ converges to a Dirac delta. More precisely
\begin{align*}
\lim_{t\searrow s}H(\cdot,t; y,s)&=\delta_{y}, \quad \forall y\in M, s<1,\\
\ \lim_{s\nearrow t}H(x,t;\cdot,s)&=\delta_{x}, \quad \forall x\in M, t<1
\end{align*}
in the weak* sense. 
\end{enumerate}
Furthermore, $H$ satisfies the semigroup property
\begin{equation}\label{semigroup}
H(x,t;y,s)=\int_{M}H(x,t;z, \varrho)H(z, \varrho; y,s)dV_{\varrho}(z), \quad \forall (x,t,y,s)\in \mathcal{D}, \varrho\in (s,t)
\end{equation}
and the following integral relationships:
\begin{align}
\int_{M}H(x,t; y,s)dV_{t}(x)&\leq 1, \quad \forall y\in M, s<t<1; \label{integral-t}\\
\int_{M}H(x,t; y,s)dV_{s}(y)&=1, \quad \forall x\in M, s<t<1. \label{integral-s}
\end{align}
\end{prop}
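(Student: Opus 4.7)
The plan is to construct $H$ by a Dirichlet exhaustion of $M$ and to verify the listed properties by passing to the limit. The key point is that a general Ricci shrinker may have unbounded curvature, so one cannot invoke the classical construction on flows with bounded geometry directly; instead the shrinker structure, particularly the growth estimate \eqref{fgrowth} together with the relations \eqref{potential-time-deriv}--\eqref{potential-curvature-eq}, must be exploited as a substitute.

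First, fix the exhaustion $\Omega_{k}=\{f<k\}$, which are precompact with $\bigcup_{k}\Omega_{k}=M$ thanks to \eqref{fgrowth}. For each $k$, standard parabolic theory on compact domains with boundary yields a positive smooth Dirichlet heat kernel $H_{k}$ on $\Omega_{k}\times\Omega_{k}\times\{s<t\}$ that solves \eqref{heat-eq} in $(x,t)$ and \eqref{conj-heat-eq} in $(y,s)$ with Dirichlet boundary conditions, satisfies the Dirac-delta initial conditions, enjoys the semigroup identity, and obeys Dirichlet analogues of \eqref{integral-t}--\eqref{integral-s}. Extending $H_{k}$ by zero to all of $M$, the parabolic maximum principle gives monotonicity $H_{k}\leq H_{k+1}$, so $H:=\lim_{k}H_{k}$ is well-defined pointwise. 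To conclude that $H$ is finite and smooth one needs uniform local upper bounds on the $H_{k}$; these can be obtained via a Gaussian-type estimate exploiting \eqref{fgrowth} and \eqref{potential-curvature-eq} (which effectively provide a weighted curvature control along the flow), combined with the parabolic Schauder estimates from Appendix \ref{schauder-appendix}. Local parabolic regularity then upgrades the convergence to $C^{\infty}_{\mathrm{loc}}$, and properties (1)--(3) together with \eqref{semigroup} transfer from the $H_{k}$ to $H$ directly.

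The integral bound \eqref{integral-t} follows by Fatou applied to the Dirichlet bound $\int_{\Omega_{k}}H_{k}(\cdot,t;y,s)\,dV_{t}\leq 1$, which itself comes from integrating $\square H_{k}=0$ and using $S\geq 0$ together with the vanishing boundary values. The main obstacle, as I expect, is the conservativeness identity \eqref{integral-s}. For fixed $(x,t)$, set $u_{R}(s)=\int_{M}H(x,t;y,s)\,\varphi_{R}(y)\,dV_{s}(y)$ for a cutoff $\varphi_{R}$ equal to $1$ on $\{f\leq R\}$. Using the conjugate heat equation and the identity $\partial_{s}dV_{s}=-S\,dV_{s}$, one computes $u_{R}'(s)=-\int_{M}H\,\Delta \varphi_{R}\,dV_{s}$. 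The goal is then to pass to $R\to\infty$ and deduce $\tfrac{d}{ds}\int_{M}H(x,t;y,s)\,dV_{s}(y)=0$, which combined with $\lim_{s\nearrow t}\int_{M}H\,dV_{s}=1$ (from the Dirac-delta initial condition) yields \eqref{integral-s}. The hard step is controlling the error term $\int_{M}H\,\Delta \varphi_{R}\,dV_{s}$ as $R\to\infty$: this requires a Gaussian upper bound on $H$ together with the quadratic growth \eqref{fgrowth} of $f$, ensuring that mass in $\{f\geq R\}$ decays sufficiently fast. This is precisely the step where the shrinker geometry plays an essential role not available in the non-shrinker setting.
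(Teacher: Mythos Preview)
The paper does not prove this proposition at all: it is quoted from Li--Wang \cite[Theorem~7]{heatkernel} and used as a black box. So there is no ``paper's own proof'' to compare against; your outline is an attempt to reconstruct the argument in the cited reference, and the Dirichlet exhaustion scheme you describe is indeed the one used there.

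Your sketch is structurally sound, but two points are not correctly identified. First, the uniform local upper bound on the $H_{k}$ does \emph{not} come from a Gaussian estimate derived from \eqref{fgrowth} and \eqref{potential-curvature-eq}; the actual source is the log-Sobolev inequality on shrinkers (which follows from Perelman's entropy monotonicity and the normalisation \eqref{potential-normalisation}), yielding the on-diagonal ultracontractivity bound \eqref{ultracontr}. That bound is what makes $H=\lim H_{k}$ finite; Gaussian off-diagonal decay is a separate, later step. Appealing vaguely to \eqref{fgrowth}--\eqref{potential-curvature-eq} would not close this gap. Second, for the stochastic completeness \eqref{integral-s} you do not need a Gaussian bound on $H$ either: once you know $\int_{M}H(x,t;\cdot,s)\,dV_{s}\leq 1$ (by Fatou, as you note), it suffices to have cut-offs with $|\square\phi^{r}|\leq C(n)/r$, since then $|u_{r}'(s)|\leq C(n)/r$ directly. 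The existence of such cut-offs is precisely \cite[Lemma~3]{heatkernel}, and their construction uses $F$ (hence time-dependent cut-offs) together with the shrinker identities; a time-independent $\varphi_{R}$ with only $\Delta_{g(s)}\varphi_{R}$ controlled uniformly in $s$ is harder to produce.
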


\begin{rem}\label{rem.compareconvergences}
We recall here that if $(X,\tau)$ is a Hausdorff, locally compact topological space and
$\mathcal{B}(\tau)$ is its Borel $\sigma$-algebra, a sequence of measures $(\mu_{i})_{i\in \N}$ on $(X,\mathcal{B}(\tau))$ converges \emph{narrowly} to a measure $\mu$ on $(X,\mathcal{B}(\tau))$ if 
\begin{equation}\label{measure-conv-def}
\int_{X}f d\mu_{i}\longrightarrow \int_{X}f d\mu
\end{equation}
for every continuous, bounded function $f\colon X\to \R$. If $\mu_{i}$, $i \in \N$ and $\mu$ are probability measures, narrow convergence is equivalent to weak* convergence, i.e.~it is sufficient to prove \eqref{measure-conv-def} for $f\in C_{c}(X)$, the space of continuous functions on $X$ with compact support; see \cite[Problem~5.10]{bill}\end{rem}

Among the many additional properties satisfied by $H$, for the moment we only need the following on-diagonal upper bound, which holds for every 
$(x,t,y,s)\in \mathcal{D}$:
\begin{equation}\label{ultracontr}
H(x,t; y,s)\leq \frac{e^{-\mu_{g}}}{(4\pi(t-s))^{\frac{n}{2}}}.
\end{equation}
For a proof see \cite[Theorem~15]{heatkernel}.

Finally, note that the functions $\overline{u}=F+\frac{n}{2}t$ and $\overline{v}=(4\pi \tau)^{-n/2}e^{-f(t)}$ are solutions on $M\times (-\infty,1)$ to the heat equation \eqref{heat-eq} and to the conjugate heat equation \eqref{conj-heat-eq}, respectively. Moreover, $\int_{M}\overline{v}(x,t)dV_{g(t)}=1$ for any $t<1$; however $\overline{v}$ does not satisfy Theorem \ref{thm-sing-heat-kernel-intro}.

\subsection{Heat Kernel Based at the Singular Time}

We now prove Theorem \ref{thm-sing-heat-kernel-intro}, dividing it into several propositions for convenience of the reader. We will also deduce some additional properties of the heat kernel at the singular time.

As usual, let $(M,g,f)$ be a Ricci shrinker and let $(M,g(t))_{t<1}$ be its associated Ricci flow. 
Fix a point $q\in M$ and let $\mathsf{t}_{i}\nearrow 1$ be a sequence converging to the singular time $T=1$.
Thanks to the results from Section \ref{heat-kernel-section}, each function 
\begin{equation*}
v_{i}(y,s)=H(q,\mathsf{t}_{i}; y,s)
\end{equation*}
solves the conjugate heat equation \eqref{conj-heat-eq} on $M\times (-\infty,\mathsf{t}_{i})$.
We are going to show that the sequence $(v_{i})_{i\in \N}$ converges to some limit function $v$ and describe its properties.

\begin{prop}[Existence of Heat Kernel Based at the Singular Time]\label{singular-heat-kernel-existence}
There exists a smooth function $v \in C^{\infty}(M\times (-\infty,1))$ such that $(v_{i})_{i \in \N}$ subconverges to $v$ in $C^{\infty}_{\text{loc}}(M\times (-\infty,1))$. Moreover, $v$ is nonnegative, solves the conjugate heat equation \eqref{conj-heat-eq} on $M\times (-\infty,1)$ and, for every $s<1$, the function $v(\cdot,s)$ is a probability density with respect to the volume form $dV_{s}$. 
\end{prop}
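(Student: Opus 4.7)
The plan is to use the on-diagonal heat kernel bound \eqref{ultracontr} to extract uniform local bounds on the sequence $(v_{i})$, upgrade these to $C^{\infty}_{\mathrm{loc}}$ bounds via parabolic Schauder estimates, and then extract a subsequential limit.

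I will first fix a compact set $K\subset M\times(-\infty,1)$ and pick $s^{*}<1$ with $K\subset M\times(-\infty,s^{*}]$. Since $\mathsf{t}_{i}\nearrow 1$, for $i$ sufficiently large one has $\mathsf{t}_{i}-s\geq (1-s^{*})/2$ for every $(y,s)\in K$, and \eqref{ultracontr} yields the uniform bound
\[
0\leq v_{i}(y,s)\leq \frac{e^{-\mu_{g}}}{\bigl(2\pi(1-s^{*})\bigr)^{n/2}}.
\]
Since each $v_{i}$ solves $\square^{\ast}v_{i}=0$ with smooth coefficients (item (2) of Proposition \ref{heat-kernel-thm}), the Schauder estimates from Appendix \ref{schauder-appendix} promote this $L^{\infty}$ bound to uniform $C^{k,\alpha}$-bounds on any slightly smaller compact subset, for every $k\geq 0$. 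An Arzel\`a--Ascoli/diagonal argument applied to an exhaustion of $M\times(-\infty,1)$ by compact sets then produces a subsequence (still denoted $v_{i}$) converging in $C^{\infty}_{\mathrm{loc}}(M\times(-\infty,1))$ to a smooth limit $v$. Nonnegativity of $v$ follows from the pointwise limit, and passing to the limit in $\square^{\ast}v_{i}=0$ gives $\square^{\ast}v=0$ on $M\times(-\infty,1)$.

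The main obstacle is verifying the probability density property, since $C^{\infty}_{\mathrm{loc}}$ convergence does not control the total mass at infinity. One direction is immediate: by Fatou's lemma and \eqref{integral-s},
\[
\int_{M}v(y,s)\,dV_{s}(y)\leq \liminf_{i\to\infty}\int_{M}v_{i}(y,s)\,dV_{s}(y)=1.
\]
To prevent mass from escaping to spatial infinity, the plan is to establish tightness of the family of probability measures $\{v_{i}(\cdot,s)\,dV_{s}\}_{i}$. The natural tool is a Gaussian-type upper bound on the conjugate heat kernel $H$ of a Ricci shrinker, available from Li--Wang \cite{heatkernel, heatkernel2}, which, combined with the quadratic $f$-growth \eqref{fgrowth} and the resulting volume-growth control of the shrinker, should give that for any fixed $s<1$,
\[
\sup_{i}\int_{M\setminus B_{s}(p,R)}v_{i}(y,s)\,dV_{s}(y)\longrightarrow 0 \quad\text{as }R\to\infty.
\]
Combined with the local smooth convergence, this forces equality in Fatou. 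An alternative approach would be to verify directly that $I(s)=\int_{M}v(\cdot,s)\,dV_{s}$ is constant in $s$ by differentiating under the integral: using $\partial_{s}dV_{s}=-S\,dV_{s}$ and the conjugate heat equation one gets $\partial_{s}I(s)=-\int_{M}\Delta v\,dV_{s}=0$, provided enough decay of $v$ and $|\nabla v|$ is available to justify the integration by parts, and then compute the value of $I$ at a time where the asymptotics are understood.
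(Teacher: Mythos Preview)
Your convergence argument (uniform $L^\infty$ bound from \eqref{ultracontr}, upgrade via the parabolic Schauder estimates of Appendix \ref{schauder-appendix}, Arzel\`a--Ascoli plus a diagonal extraction over an exhaustion) matches the paper's approach exactly and is correct.

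For the probability density property, the paper takes a different and more self-contained route than either of your proposals. Rather than invoking off-diagonal Gaussian bounds, it uses the cut-off functions $\phi^{r}$ from \cite[Lemma~3]{heatkernel}, which satisfy $|\square \phi^{r}|\leq C(n)r^{-1}$ uniformly. Setting $\mathcal{I}_{i}(s)=\int_{M}v_{i}(y,s)\phi^{r}(y,s)\,dV_{s}(y)$, the conjugate heat equation and integration by parts give $|\partial_{s}\mathcal{I}_{i}(s)|\leq C(n)r^{-1}$; since $\mathcal{I}_{i}(s)\to \phi^{r}(q,\mathsf{t}_{i})=1$ as $s\nearrow \mathsf{t}_{i}$ (by the Dirac property, using the monotonicity $\partial_{s}F\leq 0$ from \eqref{potential-time-deriv} to ensure $q$ remains in $\{\phi^{r}=1\}$), one obtains $1-\int_{K}v_{i}(\cdot,\bar{s})\,dV_{\bar{s}}\leq C(n)(1-\bar{s})r^{-1}$ for the compact set $K=\{F(\cdot,\bar{s})\leq 3r\}$. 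This is exactly the uniform tightness estimate, obtained using only the on-diagonal bound and elementary cut-off technology. Your approach (a) is plausible in spirit and would likely go through if the Gaussian bounds from \cite{heatkernel} are uniform as the base time $\mathsf{t}_{i}\nearrow 1$, but this is a deeper input that you have not verified.

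Your approach (b) has a genuine gap: even if one could justify $\partial_{s}\int_{M}v(\cdot,s)\,dV_{s}=0$ (which already requires decay of $|\nabla v|$ at spatial infinity that you have not established), there is no time at which the value of this integral is known \emph{a priori}. The limit $v$ is defined only for $s<1$, with no boundary condition available as $s\nearrow 1$ or $s\to -\infty$, so constancy alone does not determine the constant.
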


\begin{proof}
We start proving that for every compact subset $\Omega \subseteq M\times (-\infty,1)$ and every $m\in \N$, the sequence $(v_{i})_{i\in \N}$ is uniformly bounded in $C^{m}(\Omega)$. It is sufficient to consider compact subsets of the form $\Omega=K\times [a,b]$, with $K\subseteq M$ compact and $[a,b]\subseteq (-\infty,1)$. 
Take $\delta>0$ sufficiently small so that $b+2\delta<1$. Set $T=b-a+2\delta$ and define $\lambda\colon [0,T]\to [a-\delta,b+\delta]$ by $\tau\mapsto b+\delta-\tau$. This is a bijection and $\lambda(I)=[a,b]$, where $I=[\delta,b-a+\delta]$.

Fix $i_{0}\in \N$ such that $\mathsf{t}_{i}>b+2\delta$ for $i\geq i_{0}$ and consider the functions 
\begin{equation*}
u_{i}\colon M\times [0,T]\to \R
\end{equation*}
defined by 
\begin{equation*}
u_{i}(y,\tau)=v_{i}(y,\lambda(\tau)).
\end{equation*}
Setting $\widetilde{g}(\tau)=g(\lambda(\tau))$, then $(M,\widetilde{g}(\tau))_{\tau \in [0,T]}$ is a backward Ricci flow on $M$ while $u_{i}$ satisfies the conjugate backward heat equation \eqref{conjheateq-backward}. From the Schauder estimates in Proposition \ref{schauder-higher-flow}, it follows that for every $(x_{0},\tau_{0})\in M\times I$ there exist $P(x_{0},\tau_{0},\varepsilon)$ and $P^{\ast}(x_{0},\tau_{0},\varepsilon)$, open neighbourhoods of $(x_{0},\tau_{0})$ in $M\times (0,T)$ for some $\varepsilon>0$ and with $P\subseteq P^{\ast}$, such that
\begin{equation}\label{estimate-ui}
\|u_{i} \|_{C^{m}(P)} \leq C_{m}\|u_{i}\|_{L^{\infty}(P^{\ast})}
\end{equation}
for every $m\in \N$. Note that $\varepsilon$ depends only on $\delta$, and thus on $I$, and $C_{m}$ is independent of $u_{i}$.

Now $\mathcal{U}=\{P(x_{0},\tau_{0},\varepsilon)\}_{(x_{0},\tau_{0})\in K\times I}$ is an open cover of $K\times I$, while $P^{\ast}(x_{0},\tau_{0},\varepsilon)\subseteq M\times [0,T]$ for every $(x_{0},\tau_{0})\in K\times I$. Since $K\times I$ is compact we can extract a finite sub-cover from $\mathcal{U}$ and \eqref{estimate-ui}, together with a standard argument, implies
\begin{equation*}
\|u_{i}\|_{C^m(K\times I)} \leq C_{m}\|u_{i}\|_{L^{\infty}(M\times [0,T])}.
\end{equation*}
Returning to the original functions $v_{i}$, observe that the estimate \eqref{ultracontr} gives
\begin{equation*}
v_{i}(y,s)\leq \frac{e^{-\mu_{g}}}{(4\pi(\mathsf{t}_{i}-s))^{\frac{n}{2}}} \leq \frac{e^{-\mu_{g}}}{(4\pi \delta)^{\frac{n}{2}}}
\end{equation*}
for every $(y,s)\in M\times [a-\delta,b+\delta]$, and therefore
\begin{equation}\label{estimate-vi}
\|v_{i}\|_{C^m(K\times [a,b])} \leq C_{m}\|v_{i}\|_{L^{\infty}(M\times [a-\delta,b+\delta])} \leq C_{m}\frac{e^{-\mu_{g}}}{(4\pi\delta)^{\frac{n}{2}}}.
\end{equation}

Since the product manifold $M\times (-\infty,1)$ can be exhausted by a family of compact subsets of the form $(K_{i}\times [-i,1-1/i])_{i \in \N}$, where $(K_{i})_{i\in \N}$ is an exhaustion of $M$, using \eqref{estimate-vi} together with the Arzelà-Ascoli theorem and a diagonal argument, it is possible to extract from $(v_{i})_{i\in \N}$ a subsequence that converges to a smooth limit function $v\in C^{\infty}(M\times (-\infty,1))$ in $C^{\infty}_{\text{loc}}(M\times (-\infty,1))$, proving the existence part of the proposition.

Let us prove the stated additional properties of $v$. First of all, $v\geq 0$ as each $v_{i}$ is positive. 
Since the convergence is smooth on compact subsets, equation \eqref{conj-heat-eq} passes to the limit, and thus $v$ solves the conjugate heat equation on $M\times (-\infty,1)$. It remains to prove that $v(\cdot,s)$ is a probability density for $dV_{s}$; the proof of this fact is a little bit more involved.

Fix a time $\overline{s}<1$. Then $\int_{M}v_{i}(y,\overline{s})dV_{\overline{s}}(y)=1$ due to \eqref{integral-s}. Since $v_{i}(\cdot,\overline{s})\to v(\cdot, \overline{s})$ pointwise on $M$, Fatou's lemma implies
\begin{equation*}
\lambda:=\int_{M}v(y,\overline{s})dV_{\overline{s}}(y)\leq 1.
\end{equation*}
We want to show that $\lambda=1$, or equivalently $1-\lambda=0$. Let $K$ be a compact subset to be determined later. Write
\begin{align*}
0 \leq 1-\lambda &= \left(1-\int_{K}v_{i}(y,\overline{s})dV_{\overline{s}}(y)\right)
+ \left(\int_{K}v_{i}(y,\overline{s})dV_{\overline{s}}(y)-\int_{K}v(y,\overline{s})dV_{\overline{s}}(y)\right)\\
&\quad + \left(\int_{K}v(y,\overline{s})dV_{\overline{s}}(y) -\lambda\right)
=I+I\! I+I\! I\! I.
\end{align*}
Clearly $I\! I\! I \leq 0$, while
\begin{equation*}
| I\! I |\leq \int_{K}|v_{i}(y,\overline{s})-v(y,\overline{s})|dV_{\overline{s}}(y)\leq \Vol_{\overline{s}}(K)\sup_{K}|v_{i}(y,\overline{s})-v(y,\overline{s})|.
\end{equation*}
It remains to estimate the term $I$. First of all, recall that there exists a smooth family of cut-off functions $(\phi^{r}(y,s))_{r>0}$ as in \cite[Lemma~3]{heatkernel}.
By construction, $\phi^{r}(\cdot,s)\equiv 1$ on $\{F(\cdot,s)\leq r\}$ and $\phi^{r}(\cdot,s)=0$ on $\{F(\cdot,s)\geq 2r\}$. Now from \eqref{potential-time-deriv} one sees that $\partial_{s}F(y,s)\leq 0$. Therefore $F(\cdot,s)\leq F(\cdot,\overline{s})$ on $M$ if $s\geq \overline{s}$.
Assume $r>0$ is large enough such that $q\in \{F(\cdot,\overline{s})\leq r\}$. Then $q\in \{F(\cdot,s)\leq r\}$ also for $s\geq \overline{s}$, that is $\phi^{r}(q,s)=1$ for such $s$. Defining $K=\{F(\cdot,\overline{s})\leq 3r\}$ then $\supp \phi^{r}(\cdot,\overline{s})\subseteq K$ and we have
\begin{equation*}
\int_{K}v_{i}(y,\overline{s})dV_{\overline{s}}(y)=\int_{M}v_{i}(y,\overline{s})\mathbf{1}_{K}(y)dV_{\overline{s}}(y) \geq \int_{M}v_{i}(y,\overline{s})\phi^{r}(y,\overline{s})dV_{\overline{s}}(y).
\end{equation*}
Therefore
\begin{equation*}
I=1-\int_{K}v_{i}(y,\overline{s})dV_{\overline{s}} \leq 1-\int_{M}v_{i}(y,\overline{s})\phi^{r}(y,\overline{s})dV_{\overline{s}}(y).
\end{equation*}
Consider the function 
\begin{equation*}
\I_{i}(s)=\int_{M}v_{i}(y,s)\phi^{r}(y,s)dV_{s}(y)
\end{equation*}
for $s\in [\overline{s},\mathsf{t}_{i})$. 
Then we can differentiate under the integral sign, obtaining
\begin{equation*}
\partial_{s}\I_{i}(s)=\int_{M}v_{i}(y,s)\square_{y,s}\phi^{r}(y,s)dV_{s}(y),
\end{equation*}
where we used the conjugate heat equation, the Green formulas and the evolution equation for the volume density under Ricci flow. Since by \cite[Lemma~3]{heatkernel} we have $|\square_{y,s}\phi^{r}(y,s)|\leq C(n)r^{-1}$, it follows that
\begin{equation*}
|\partial_{s}\I_{i}(s)|\leq \int_{M}v_{i}(y,s)|\square_{y,s}\phi^{r}(y,s)|dV_{s}(y)\leq C(n)r^{-1}
\end{equation*}
for every $s\in [\overline{s},\mathsf{t}_{i})$. 
Therefore
\begin{equation*}
|\I_{i}(s)-\I_{i}(\overline{s})|\leq \int_{\overline{s}}^{s} C(n)r^{-1}d\sigma=C(n)(s-\overline{s})r^{-1}.
\end{equation*}
Taking the limit for $s\nearrow \mathsf{t}_{i}$ one gets
\begin{equation*}
\lim_{s\nearrow \mathsf{t}_{i}}\I_{i}(s)=\lim_{s\nearrow \mathsf{t}_{i}}\int_{M}v_{i}(y,s)\phi^{r}(y,s)dV_{s}(y)=\delta_{q}(\phi^{r}(y,\mathsf{t}_{i}))=1,
\end{equation*}
since $v_{i}(\cdot,s)\to \delta_{q}$ in the weak* sense. 
At this point we obtain
\begin{equation*}
| I | = |1-\I_{i}(\overline{s})|\leq C(n)(\mathsf{t}_{i}-\overline{s})r^{-1} \leq C(n)(1-\overline{s})r^{-1} .
\end{equation*}
Combining the previous estimates, we have
\begin{equation*}
1-\lambda \leq \Vol_{\overline{s}}(K)\sup_{K}|v_{i}(y,\overline{s})-v(y,\overline{s})| + C(n)(1-\overline{s})r^{-1}.
\end{equation*}
Now given any $\varepsilon>0$ there exists $r>0$ such that $ C(n)(1-\overline{s})r^{-1} \leq \varepsilon$. Since $K=\{F(\cdot,\overline{s})\leq 3r\}$ is fixed and $v_{i}(\cdot,\overline{s})\to v(\cdot,\overline{s})$ uniformly on $K$, there exists $i_{1}\geq i_{0}$ such that 
\begin{equation*}
\Vol_{\overline{s}}(K)\sup_{K}|v_{i}(y,\overline{s})-v(y,\overline{s})| \leq \varepsilon
\end{equation*} 
for every $i\geq i_{1}$. It follows $1-\lambda \leq 2 \varepsilon$ and since $\varepsilon>0$ was arbitrary, we are done.
\end{proof}

\begin{defn} 
A function $v$ obtained as above is called a \emph{conjugate heat kernel based at the singular point $(q,1)$.} It will be denoted by $v_{q}$. 
\end{defn}

\begin{rem} Observe that, in general, $v_{q}$ could depend on the sequence $\mathsf{t}_{i}\nearrow 1$ and on the subsequence chosen in the proof.
\end{rem}

From the construction, it is possible to deduce some other properties of $v_{q}$, similar to the ones seen in Section \ref{heat-kernel-section} for the heat kernel at regular times. First of all, passing to the limit in \eqref{ultracontr} we obtain the inequality 
\begin{equation}\label{ultracontr-sing}
v_{q}(y,s)\leq \frac{e^{-\mu_{g}}}{(4\pi(1-s))^{\frac{n}{2}}}
\end{equation}
for every $y\in M$ and $s<1$. 

Using $H$ and $v_{q}$ we can introduce the following probability measures on the measurable space $(M,\mathcal{B}(M))$,
where $\mathcal{B}(M)$ is the Borel $\sigma$-algebra induced by the manifold topology.
For $s<t\leq 1$, define:
\begin{equation*}
d\nu_{(x,t);s}=\begin{cases}
H(x,t; \cdot, s)dV_{s},\quad &\text{if } x\in M\ \text{and } t<1,\\
v_{q}(\cdot,s)dV_{s}, \quad &\text{if } x=q\ \text{and } t=1.
\end{cases}
\end{equation*}
For $x=q$ and $t=1$, we also write $d\nu_{q;s} = d\nu_{(q,1);s}$. The following convergence result will allow us to extend the semigroup property \eqref{semigroup} to $\nu_{q;s}$. 

\begin{prop}[Narrow Convergence of Conjugate Heat Kernel Measures]\label{weakconv} 
The sequence of probability measures $(\nu_{(q,\mathsf{t}_{i});s})_{i\in \N}$ converges narrowly to $\nu_{q;s}$ for $i\to \infty$. 
\end{prop}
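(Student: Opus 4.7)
The plan is to invoke the equivalence from Remark \ref{rem.compareconvergences}: since $M$ is a second-countable locally compact Hausdorff space and both $\nu_{(q,\mathsf{t}_i);s}$ and $\nu_{q;s}$ are probability measures, it suffices to verify weak$^{\ast}$ convergence, i.e.~that
\begin{equation*}
\int_{M} \varphi \, d\nu_{(q,\mathsf{t}_i);s} \longrightarrow \int_{M} \varphi \, d\nu_{q;s}
\end{equation*}
for every $\varphi \in C_{c}(M)$. The fact that $\nu_{(q,\mathsf{t}_i);s}$ has total mass $1$ is \eqref{integral-s}, while the corresponding statement for $\nu_{q;s}$ is part of Proposition \ref{singular-heat-kernel-existence}.

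Once the problem is reduced to testing against $\varphi \in C_{c}(M)$, the result follows almost immediately from the local smooth convergence $v_i \to v_q$ obtained in Proposition \ref{singular-heat-kernel-existence}. Fix such a $\varphi$ with $K := \supp \varphi$ compact. Then
\begin{equation*}
\left| \int_{M} \varphi \, d\nu_{(q,\mathsf{t}_i);s} - \int_{M} \varphi \, d\nu_{q;s} \right|
= \left| \int_{K} \varphi \bigl(v_i(\cdot,s) - v_q(\cdot,s)\bigr)\, dV_{s} \right|
\leq \|\varphi\|_{\infty}\,\Vol_{s}(K)\,\sup_{K}\bigl|v_i(\cdot,s) - v_q(\cdot,s)\bigr|,
\end{equation*}
and the right-hand side tends to $0$ by uniform convergence of $v_i(\cdot,s)$ to $v_q(\cdot,s)$ on $K$ (here we recall that the $v_i$ are taken along the same subsequence produced by Proposition \ref{singular-heat-kernel-existence}, so that $v_q$ is genuinely their limit).

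There is essentially no obstacle in the argument: the entire content is the equivalence of the two convergence modes for probability measures, together with the local smooth convergence already established. The slightly subtle point worth highlighting in the write-up is the necessity that the limit $v_q(\cdot,s)\,dV_{s}$ be a probability measure, since weak$^{\ast}$ convergence alone would only yield a sub-probability limit a priori; this is exactly why the somewhat delicate mass-preservation argument at the end of Proposition \ref{singular-heat-kernel-existence} is indispensable here.
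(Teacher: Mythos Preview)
Your proof is correct and follows essentially the same approach as the paper: reduce to weak$^{\ast}$ convergence via Remark~\ref{rem.compareconvergences}, test against $\varphi\in C_c(M)$, and use the uniform convergence $v_i(\cdot,s)\to v_q(\cdot,s)$ on the compact support together with the mass-preservation from Proposition~\ref{singular-heat-kernel-existence}.
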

\begin{proof}
Take $f\in C_{c}(M)$ and let $K=\supp f$ be its support; also denote $C=\sup |f|$. Then
\begin{align*}
\left \vert \int_{M}f d\nu_{(q,\mathsf{t}_{i});s} - \int_{M}f d\nu_{q;s}\right\vert 
&= \left\vert \int_{K}(v_{i}(y,s)-v_{q}(y,s))f(y)dV_{s}(y)\right\vert\\
&\leq C\int_{K}|v_{i}(y,s)-v_{q}(y,s)|dV_{s}(y)\\
&\leq C \Vol_{s}(K)\sup_{K}|v_{i}(y,s)-v_{q}(y,s)|.
\end{align*}
Since $v_{i}(\cdot,s)\to v_{q}(\cdot,s)$ uniformly on $K$, we get weak* convergence. As all $\nu_{(q,\mathsf{t}_{i});s}$ as well as the limit $\nu_{q;s}$ are probability measures, this implies narrow convergence, as explained in Remark \ref{rem.compareconvergences}.
\end{proof}

\begin{prop}[Semigroup Property]\label{prop.semigroup}
We have
\begin{equation}\label{semigroup-sing}
v_{q}(y,s)=\int_{M}H(z,\varrho; y,s)v_{q}(z,\varrho)dV_{\varrho}(z)
\end{equation}
for every $y\in M$, $s<1$ and $\varrho \in (s,1)$.
\begin{proof}
Applying the semigroup property \eqref{semigroup} to $v_{i}$, we can write, for $y\in M, s<1$ and $\varrho\in (s,\mathsf{t}_{i})$ fixed,
\begin{equation}\label{semigroup-vi}
v_{i}(y,s)=\int_{M}H(z,\varrho; y,s)v_{i}(z,\varrho)dV_{\varrho}(z)= \int_{M}f(z)d\nu_{(q, \mathsf{t}_{i});s}(z),
\end{equation}
where $f(z)=H(z,\varrho; y,s)$. As the function $f$ is continuous and bounded thanks to \eqref{ultracontr}, the claim follows from Proposition \ref{weakconv} by passing to the limit in \eqref{semigroup-vi}.
\end{proof}
\end{prop}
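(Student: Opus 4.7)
The plan is to derive the semigroup identity for $v_q$ by passing to the limit $i\to\infty$ in the corresponding identity for the approximating sequence $v_i$, using the narrow convergence of measures established in Proposition~\ref{weakconv}.

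Concretely, I would fix $y\in M$, $s<1$ and $\varrho\in(s,1)$, and restrict to indices $i$ large enough that $\varrho<\mathsf{t}_i$. Applying the semigroup property~\eqref{semigroup} of $H$ to $v_i(y,s)=H(q,\mathsf{t}_i;y,s)$ with intermediate time $\varrho$ yields
\begin{equation*}
v_i(y,s)=\int_M H(z,\varrho;y,s)\,v_i(z,\varrho)\,dV_\varrho(z)=\int_M f(z)\,d\nu_{(q,\mathsf{t}_i);\varrho}(z),
\end{equation*}
where $f(z):=H(z,\varrho;y,s)$ and I have rewritten the integrand using $d\nu_{(q,\mathsf{t}_i);\varrho}=v_i(\cdot,\varrho)\,dV_\varrho$. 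The left-hand side converges pointwise to $v_q(y,s)$ by the $C^{\infty}_{\mathrm{loc}}$ convergence from Proposition~\ref{singular-heat-kernel-existence}.

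For the right-hand side, Proposition~\ref{weakconv} provides narrow convergence $\nu_{(q,\mathsf{t}_i);\varrho}\to\nu_{q;\varrho}$, so it suffices to check that the test function $f$ is continuous and bounded on $M$. Continuity of $H$ on $\mathcal{D}$ is part of Proposition~\ref{heat-kernel-thm}, and the on-diagonal bound~\eqref{ultracontr} gives $f(z)\le e^{-\mu_g}(4\pi(\varrho-s))^{-n/2}$, a finite constant independent of $z$ since $\varrho-s>0$ is fixed. Passing to the limit then yields the claimed identity. I do not foresee any real obstacle here; the only subtle point worth noting is that the uniform-in-$z$ bound on $f$ crucially relies on working at an intermediate time $\varrho$ strictly larger than $s$, rather than letting $\varrho\searrow s$.
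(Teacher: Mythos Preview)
Your proof is correct and follows essentially the same approach as the paper's: apply the semigroup identity to $v_i$, rewrite the right-hand side as an integral against $d\nu_{(q,\mathsf{t}_i);\varrho}$, observe that the test function $f(z)=H(z,\varrho;y,s)$ is continuous and bounded by~\eqref{ultracontr}, and pass to the limit via the narrow convergence from Proposition~\ref{weakconv}. In fact you are slightly more careful than the paper, which contains a typographical slip writing $d\nu_{(q,\mathsf{t}_i);s}$ where it should read $d\nu_{(q,\mathsf{t}_i);\varrho}$.
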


As a direct consequence, we can prove the positivity of the singular time heat kernel and the equivalence between the measures introduced so far. 

\begin{cor}[Positivity of Singular Time Heat Kernel]\label{cor.positivity}
The singular time heat kernel $v_{q}$ is strictly positive everywhere. Moreover, for every $(x,t)\in M\times (-\infty,1)$ and $s<t$, the measures $dV_{s}$, $\nu_{(x,t);s}$ and $\nu_{q;s}$ are all equivalent. In particular, they all have full support.  
\end{cor}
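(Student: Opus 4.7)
The plan is to deduce everything from the semigroup identity \eqref{semigroup-sing} together with the fact that the regular-time heat kernel $H$ is strictly positive. First I would establish pointwise positivity of $v_q$. Fix $(y,s)\in M\times(-\infty,1)$ and pick any $\varrho\in(s,1)$. By Proposition \ref{singular-heat-kernel-existence}, the function $z\mapsto v_q(z,\varrho)$ is nonnegative and satisfies $\int_M v_q(\cdot,\varrho)\,dV_\varrho=1$, so the set $U=\{z\in M : v_q(z,\varrho)>0\}$ has strictly positive $dV_\varrho$-measure. Since $H(z,\varrho;y,s)>0$ everywhere on $\mathcal{D}$ (Proposition \ref{heat-kernel-thm}), the integrand on the right-hand side of
\begin{equation*}
v_q(y,s)=\int_M H(z,\varrho;y,s)\,v_q(z,\varrho)\,dV_\varrho(z)
\end{equation*}
is nonnegative and strictly positive on $U$, hence the integral is strictly positive. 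This yields $v_q(y,s)>0$ at every $(y,s)\in M\times(-\infty,1)$.

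Next I would read off the equivalence of the three measures. For $s<t<1$ and $x\in M$, the density $z\mapsto H(x,t;z,s)$ of $\nu_{(x,t);s}$ with respect to $dV_s$ is smooth and strictly positive by Proposition \ref{heat-kernel-thm}, so $\nu_{(x,t);s}$ and $dV_s$ are equivalent. Similarly the density $y\mapsto v_q(y,s)$ of $\nu_{q;s}$ with respect to $dV_s$ is smooth and, by the previous paragraph, strictly positive, so $\nu_{q;s}$ and $dV_s$ are equivalent. Equivalence of all three measures then follows by transitivity.

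Finally, since $dV_s$ is a Riemannian volume measure on a connected smooth manifold, it assigns positive mass to every non-empty open set, i.e.~it has full support; by equivalence the same holds for $\nu_{(x,t);s}$ and $\nu_{q;s}$. The only real content of the corollary is therefore the positivity statement, and the mild point there is noting that the probability-density property established in Proposition \ref{singular-heat-kernel-existence} rules out $v_q(\cdot,\varrho)\equiv 0$; I do not foresee a substantive obstacle.
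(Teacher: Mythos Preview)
Your proof is correct and follows essentially the same approach as the paper: positivity of $v_q$ is deduced from the semigroup identity \eqref{semigroup-sing}, the strict positivity of $H$, and the fact that $v_q(\cdot,\varrho)$ is a probability density (hence not identically zero); the equivalence and full-support statements then follow exactly as you wrote. The only cosmetic difference is that the paper phrases the positivity step as a contradiction argument while you argue directly.
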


\begin{proof}
Suppose towards a contradiction that there exists a point $(y,s)\in M\times (-\infty,1)$ with $v_{q}(y,s)=0$, and let $\varrho\in (s,1)$ be arbitrary. 
Then by the semigroup property
\begin{equation*}
v_{q}(y,s)=0=\int_{M}H(z,\varrho; y,s)v_{q}(z,\varrho)dV_{\varrho}(z).
\end{equation*}
As the integrand is nonnegative, we deduce $H(z,\varrho; y,s)v_{q}(z,\varrho)\equiv 0$ on $M$, and therefore $v_{q}(z,\varrho)\equiv 0$ since the standard heat kernel $H$ is positive. On the other hand, $v_{q}(\cdot,\varrho)$ is a probability density which gives the desired contradiction. 

For the additional statement, it is sufficient to observe that $\nu_{(x,t);s}$ and $\nu_{q;s}$ are equivalent to the Riemannian volume measure $dV_{s}$, having strictly positive densities with respect to it. The last statement follows, since $dV_{s}$ has (obviously) full support. 
\end{proof}

Theorem \ref{thm-sing-heat-kernel-intro} follows by combining Propositions \ref{singular-heat-kernel-existence} and \ref{prop.semigroup} as well as Corollary \ref{cor.positivity}.

\section{Ricci Shrinkers as Metric Flows}\label{shrinker-mf-section}
\subsection{Variance and H-Centers on Ricci Shrinkers}

Let $(M,g,f)$ be a Ricci shrinker and let $(M,g(t))_{t<1}$ be the associated Ricci flow. Obviously, $(M,g(t))$ is a metric space with the induced distance $d_{t}$, and since all the metrics $d_{t}$ induce the manifold topology the $\sigma$-algebras $\mathcal{B}(M,d_{t})$ coincide with $\mathcal{B}(M)$. As in \cite{heatkernel2}, one can consider the variance and the $W_{1}$-Wasserstein distance associated to $(M,d_{t})$, which will be denoted by 
$\var_{t}$ and $d_{W_{1}}^{t}$, respectively. The reader can consult \cite[Section~2]{Ba20_compactness} for a detailed exposition of the topic in the more general context of metric measure spaces. Here we limit ourselves to state the following useful result; see \cite[Lemma~2.8, Lemma~2.9]{Ba20_compactness}. 

\begin{prop}[Basic Relations Between Variance and Wasserstein Distance]\label{lsc-var-prop}  
Let $(X,d)$ be a complete, separable metric space, and let $\mathcal{P}(X,d)$ be the set of its probability measures.
\begin{enumerate}
\item For any $\mu_{1},\mu_{2} \in \mathcal{P}(X,d)$ we have
\begin{equation}\label{wdistance-variance}
d_{W_{1}}(\mu_{1},\mu_{2})\leq \sqrt{\var(\mu_{1},\mu_{2})}\leq d_{W_{1}}(\mu_{1},\mu_{2})+\sqrt{\var(\mu_{1})}+\sqrt{\var(\mu_{2})}.
\end{equation}
\item If a sequence $(\mu_{i})_{i\in \N}\subseteq \mathcal{P}(X,d)$ converges narrowly to 
$\mu\in \mathcal{P}(X,d)$ and $\var(\mu_{i})\leq C<\infty$ for every $i\in \N$, then it converges to $\mu$ also with respect to 
$d_{W_{1}}$, and the following inequality holds 
\begin{equation}\label{lsc-var}
\var(\mu)\leq \liminf_{i\to \infty}\var(\mu_{i}).
\end{equation}
\end{enumerate}
\end{prop}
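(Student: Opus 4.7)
The first inequality in (1) is immediate from Jensen applied to the product coupling $\mu_1\otimes\mu_2$, since it is admissible for $d_{W_1}(\mu_1,\mu_2)=\inf_q\int d\,dq$ and $\int d\,d(\mu_1\otimes\mu_2)\le(\int d^2\,d(\mu_1\otimes\mu_2))^{1/2}$. For the second inequality, my plan is to fix any coupling $\pi$ of $\mu_1,\mu_2$ and, for each fixed $(y_1,y_2)$, take the $L^2(\mu_1\otimes\mu_2)$-norm of the triangle inequality $d(x_1,x_2)\le d(x_1,y_1)+d(y_1,y_2)+d(y_2,x_2)$, obtaining
\[
\sqrt{\var(\mu_1,\mu_2)}\le \|d(\cdot,y_1)\|_{L^2(\mu_1)}+d(y_1,y_2)+\|d(y_2,\cdot)\|_{L^2(\mu_2)}.
\]
Since the left-hand side does not depend on $(y_1,y_2)$, integrating against $\pi$ leaves it unchanged; on the right, Jensen applied to the marginals $\mu_i$ turns each $L^2$-term into $\sqrt{\var(\mu_i)}$, and choosing $\pi$ near-optimal produces $d_{W_1}(\mu_1,\mu_2)$ in place of $\int d(y_1,y_2)\,d\pi$.

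For the lower-semicontinuity claim in (2) I would apply the Portmanteau theorem to the product sequence $\mu_i\otimes\mu_i$, which converges narrowly to $\mu\otimes\mu$ on $X\times X$. Since $d^2$ is continuous and nonnegative, the standard lsc form of Portmanteau yields $\var(\mu)=\int d^2\,d(\mu\otimes\mu)\le\liminf_i\int d^2\,d(\mu_i\otimes\mu_i)=\liminf_i\var(\mu_i)$.

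The main obstacle is the promotion of narrow convergence to $W_1$-convergence. The plan is to first upgrade $\var(\mu_i)\le C$ to a uniform second-moment bound against a fixed basepoint $x_0$. For each $i$, the function $f_i(y):=\int d(x,y)^2\,d\mu_i(x)$ satisfies $\int f_i(y)\,d\mu_i(y)=\var(\mu_i)\le C$, so near-minimizers $y_i$ exist with $f_i(y_i)\le C+1$. Chebyshev then gives $\mu_i(B(y_i,R))\ge 1-(C+1)/R^2$, and combining this with tightness of $\{\mu_i\}$ (which follows from narrow convergence via Prokhorov on a complete separable metric space) forces $\{y_i\}$ to stay at bounded distance from any fixed compact set carrying most of $\mu$'s mass; a triangle inequality then produces $\sup_i\int d(\cdot,x_0)^2\,d\mu_i<\infty$. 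This uniform $L^2$-bound supplies uniform integrability of the $1$-Lipschitz function $d(\cdot,x_0)$ (Cauchy--Schwarz on the tail, Chebyshev to control the $\mu_i$-mass of that tail), so narrow convergence upgrades to convergence of first moments. By Kantorovich--Rubinstein duality, testing against truncations of arbitrary $1$-Lipschitz functions and absorbing the tails using the same uniform $L^2$-control then yields $d_{W_1}(\mu_i,\mu)\to 0$.
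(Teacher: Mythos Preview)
The paper does not actually prove this proposition; it simply cites \cite[Lemmas~2.8 and 2.9]{Ba20_compactness}. So there is no argument in the paper to compare yours against.

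Your proof is correct. The argument for part~(1) is clean: the product coupling plus Cauchy--Schwarz gives the left inequality, and your triangle-inequality-in-$L^2$ trick followed by Jensen for the concave square root gives the right one. For the lower semicontinuity in~(2), narrow convergence of products $\mu_i\otimes\mu_i\to\mu\otimes\mu$ together with truncation of $d^2$ is standard and works as you say.

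The only place worth tightening is the final step of the $W_1$-convergence. Your chain ``uniform second moment at a fixed $x_0$ $\Rightarrow$ uniform integrability of $d(\cdot,x_0)$ $\Rightarrow$ $W_1$-convergence via Kantorovich--Rubinstein and truncation'' is correct, but the last implication needs uniformity in the test function $f$. This is supplied by noting that the truncations $f_M=\max(-M,\min(f,M))$ of $1$-Lipschitz functions $f$ with $f(x_0)=0$ all satisfy $\|f_M\|_\infty+\mathrm{Lip}(f_M)\le M+1$, and the bounded-Lipschitz metric metrizes narrow convergence on Polish spaces; hence $\sup_f|\int f_M\,d\mu_i-\int f_M\,d\mu|\le (M+1)\,d_{BL}(\mu_i,\mu)\to 0$, uniformly in $f$. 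Alternatively, once you have the uniform second-moment bound, you can simply invoke the standard equivalence ``narrow convergence plus convergence of first moments $\Leftrightarrow$ $W_1$-convergence'' (e.g.\ via Skorokhod representation) and skip the duality computation.
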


We are mainly concerned with the properties of the probability measures induced by the standard and the singular time heat kernels. The following result gives a bound on their variance.

\begin{prop}[Bound on Variance of Heat Kernel]\label{concentration-shrinker} 
Defining $H_{n}=\frac{1}{2}(n-1)^{2}\pi^{2} + 4$, the following concentration properties hold. 
\begin{enumerate}
\item Let $(x,t)\in M\times (-\infty,1)$ and consider $\nu_{(x,t);s}$. Then
\begin{equation}\label{standard-prob-variance}
\var_{s}(\nu_{(x,t);s})\leq H_{n}(t-s)
\end{equation}
for every $s<t$. 
\item Let $q\in M$ and consider $\nu_{q;s}$. Then
\begin{equation}\label{conj-prob-variance}
\var_{s}(\nu_{q;s})\leq H_{n}(1-s)
\end{equation}
for every $s<1$.
\end{enumerate}
\end{prop}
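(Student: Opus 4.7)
The proof naturally splits along the two parts of the statement, and the second part reduces to the first by a limiting argument, so the plan is to establish part (1) and then push the bound through to the singular-time measure.

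For part (1), the inequality $\var_s(\nu_{(x,t);s}) \leq H_n(t-s)$ is the Hein--Naber / Bamler $H_n$-concentration estimate for the conjugate heat kernel of a Ricci flow. In the compact case it follows from the gradient estimate on $\sqrt{H}$ and the log-Sobolev inequality; in the complete non-compact Ricci shrinker setting, one uses the cut-off functions $\phi^r$ from \cite[Lemma~3]{heatkernel} together with the fast decay of $H(x,t;\cdot,s)$ controlled by \eqref{ultracontr} to justify the same integration by parts as in \cite{Ba20_compactness}. The plan is therefore to cite (or very briefly recall) this estimate in the form already proved for Ricci shrinkers in \cite[Section~3]{heatkernel} and \cite[Section~6]{heatkernel2}, where \eqref{standard-prob-variance} is established.

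For part (2), the plan is to apply part (1) to the approximating measures and pass to the limit using the tools assembled in Section \ref{shrinker-preliminaries-section}. Explicitly, fix $s < 1$ and a sequence $\mathsf{t}_i \nearrow 1$ along which $v_i = H(q, \mathsf{t}_i; \cdot, \cdot)$ converges to $v_q$ in $C^\infty_{\mathrm{loc}}$, as in Proposition \ref{singular-heat-kernel-existence}. Since each $v_i(\cdot,s)$ is a standard conjugate heat kernel density based at a regular time, part (1) applied with $t = \mathsf{t}_i$ gives
\begin{equation*}
\var_s(\nu_{(q,\mathsf{t}_i);s}) \leq H_n(\mathsf{t}_i - s).
\end{equation*}
By Proposition \ref{weakconv}, $\nu_{(q,\mathsf{t}_i);s} \to \nu_{q;s}$ narrowly on the complete separable metric space $(M,d_s)$, and the right-hand side above is bounded above by $H_n(1-s)$ uniformly in $i$.

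Now the lower semicontinuity statement \eqref{lsc-var} from Proposition \ref{lsc-var-prop} applied to this sequence yields
\begin{equation*}
\var_s(\nu_{q;s}) \leq \liminf_{i\to\infty} \var_s(\nu_{(q,\mathsf{t}_i);s}) \leq \liminf_{i\to\infty} H_n(\mathsf{t}_i - s) = H_n(1-s),
\end{equation*}
which is \eqref{conj-prob-variance}. The only point requiring care is checking the hypotheses of Proposition \ref{lsc-var-prop}: narrow convergence is already Proposition \ref{weakconv}, the uniform variance bound $\var_s(\nu_{(q,\mathsf{t}_i);s}) \leq H_n(1-s)$ follows from part (1), and completeness and separability of $(M,d_s)$ are automatic. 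The main (mild) obstacle is thus only part (1): ensuring that the standard $H_n$-concentration estimate applies verbatim in the non-compact Ricci shrinker setting without curvature assumptions, which is precisely what the heat kernel theory developed in \cite{heatkernel, heatkernel2} provides.
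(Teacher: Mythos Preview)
Your proof is correct and follows essentially the same approach as the paper: part (1) is cited from \cite[Proposition~3.10]{heatkernel2}, and part (2) is obtained by applying part (1) to the approximating measures $\nu_{(q,\mathsf{t}_i);s}$, using the narrow convergence from Proposition \ref{weakconv} and the lower semicontinuity of variance from Proposition \ref{lsc-var-prop}. Your write-up is in fact slightly more explicit than the paper's in checking the hypotheses of the lower semicontinuity lemma.
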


\begin{proof}
Estimate \eqref{standard-prob-variance} is proved in \cite[Proposition~3.10]{heatkernel2}. To prove \eqref{conj-prob-variance}, let $\mathsf{t}_{i}\nearrow 1$ be the sequence defining $v_{q}$ and fix a time $s<1$. 
Applying the previous point to $\nu_{(q,\mathsf{t}_{i});s}$ for $\mathsf{t}_{i}>s$, we find
\begin{equation*}
\var_{s}(\nu_{(q,\mathsf{t}_{i});s})\leq H_{n}(\mathsf{t}_{i}-s)\leq H_{n}(1-s).
\end{equation*}
Since $\nu_{(q,\mathsf{t}_{i});s}\to \nu_{q;s}$ narrowly, Proposition \ref{lsc-var-prop} gives the claim.
\end{proof}

An important concept in Bamler's theory is the one of $H$-centers. 

\begin{defn}\label{def.Hcenter}
Given $H>0$, a point $(z,s)\in M\times (-\infty,1)$ is called: 
\begin{enumerate}
\item An \emph{$H$-center of} $(x,t)\in M\times (-\infty,1)$ if $s\leq t$ and
\begin{equation*}
\var_{s}(\nu_{(x,t);s},\delta_{z})\leq H(t-s);
\end{equation*}
\item An \emph{$H$-center of} $(q,1)$ if
\begin{equation*}
\var_{s}(\nu_{q;s},\delta_{z})\leq H(1-s).
\end{equation*}
\end{enumerate}
\end{defn}

\begin{rem} In both cases, \eqref{wdistance-variance} gives bounds on the $W_{1}$-Wasserstein distances from $\delta_{z}$ as well. 
\end{rem}

Generalising results of Bamler about compact Ricci flows or complete Ricci flows with bounded curvature to Ricci flows associated with Ricci shrinkers (without any curvature assumption), Li and Wang proved the existence of $H_{n}$-centers for the standard (i.e.~regular time) heat kernel in \cite[Proposition~3.12]{heatkernel2}. Similarly, for the singular time heat kernel we have the following.

\begin{prop}[Existence of $H_n$-Centers]\label{center-singular-time} 
For every $s< 1$, there exists an $H_{n}$-center $(z,s)$ of $(q,1)$ as in Point 2 of Definition \ref{def.Hcenter}.
\begin{proof}
Writing
\begin{align*}
\var_{s}(\nu_{q;s})&=\int_{M}\int_{M}d_{s}^{2}(x,y)d\nu_{q;s}(x)d\nu_{q;s}(y)=\int_{M}\left(\int_{M}d_{s}^{2}(x,y)d\nu_{q;s}(x)\right)d\nu_{q;s}(y)\\
&=\int_{M}\var_{s}(\nu_{q;s},\delta_{y})d\nu_{q;s}(y),
\end{align*}
we deduce
\begin{equation*}
\int_{M}\var_{s}(\delta_{y},\nu_{q;s})d\nu_{q;s}(y)\leq H_{n}(1-s)
\end{equation*}
by Proposition \ref{concentration-shrinker}. Being $\nu_{q;s}$ a probability measure, there exists at least one point $z\in M$ such that $\var_{s}(\delta_{z},\nu_{q;s})\leq H_{n}(1-s)$, and the corresponding $(z,s)$ is the desired $H_{n}$-center. 
\end{proof}
\end{prop}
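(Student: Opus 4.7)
The plan is to mimic the classical mean-value/Fubini argument that produces $H$-centers for the standard heat kernel on a metric measure space, using the variance bound from Proposition \ref{concentration-shrinker}. The key identity is that the total variance of a probability measure equals the $\nu_{q;s}$-average of its variance against Dirac masses, i.e.
\begin{equation*}
\var_{s}(\nu_{q;s})=\int_{M}\left(\int_{M}d_{s}^{2}(x,y)\, d\nu_{q;s}(x)\right)d\nu_{q;s}(y)=\int_{M}\var_{s}(\nu_{q;s},\delta_{y})\, d\nu_{q;s}(y),
\end{equation*}
which follows by Fubini's theorem applied to the nonnegative integrand $d_{s}^{2}(x,y)$ together with the symmetry of the distance function.

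Next, I would plug in the bound $\var_{s}(\nu_{q;s})\leq H_{n}(1-s)$ from Point~2 of Proposition \ref{concentration-shrinker}, so that the $y$-average above is at most $H_{n}(1-s)$. Since $\nu_{q;s}$ is a probability measure on $(M,\mathcal{B}(M))$, the mean value argument guarantees at least one point $z\in M$ for which the integrand does not exceed its average, i.e.~$\var_{s}(\delta_{z},\nu_{q;s})\leq H_{n}(1-s)$. By Definition~\ref{def.Hcenter}, Point~2, the pair $(z,s)$ is then the required $H_{n}$-center of $(q,1)$.

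There is really no serious obstacle here: Fubini applies because the integrand is nonnegative and measurable, and the mean-value step uses only that $\nu_{q;s}(M)=1$ (which is part of Proposition \ref{singular-heat-kernel-existence}). The entire argument is the direct analogue of \cite[Proposition~3.12]{heatkernel2} for regular-time $H$-centers, with the only new ingredient being the variance estimate \eqref{conj-prob-variance} that we already established via narrow convergence $\nu_{(q,\mathsf{t}_{i});s}\to \nu_{q;s}$ and the lower semicontinuity of the variance (Proposition \ref{lsc-var-prop}).
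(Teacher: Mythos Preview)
Your proof is correct and follows essentially the same approach as the paper: rewrite $\var_{s}(\nu_{q;s})$ via Fubini as the $\nu_{q;s}$-average of $\var_{s}(\nu_{q;s},\delta_{y})$, bound it by $H_{n}(1-s)$ using Proposition~\ref{concentration-shrinker}, and apply the mean-value argument. The additional remarks you include (justification of Fubini, the analogy with \cite[Proposition~3.12]{heatkernel2}) are accurate and do not change the underlying strategy.
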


\begin{rem}
Actually, as seen from the proof, for every fixed $s<1$ there exists a positive measure set of $H_{n}$-centers of $(q,1)$. The same holds for the standard heat kernel measure. Moreover, obviously, if $H \geq H_{n}$ and $(z,t)$ is an $H_{n}$-center, then it is also an $H$-center.  
\end{rem}

\begin{cor}[Wasserstein Distance to Dirac Delta]\label{centers-sequence} 
Given $s_{i}\nearrow 1$, there exists a sequence of points $(z_{i})_{i\in \N}$ in $M$ such that
\begin{equation*}
\lim_{i\to \infty}d_{W_{1}}^{s_{i}}(\nu_{q;s_{i}},\delta_{z_{i}})=0.
\end{equation*}
\end{cor}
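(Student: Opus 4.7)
The plan is to combine the existence of $H_n$-centers for the singular-time heat kernel (Proposition \ref{center-singular-time}) with the general inequality relating $W_1$-Wasserstein distance to variance (part 1 of Proposition \ref{lsc-var-prop}). The statement is essentially a direct consequence of these two results applied at each time $s_i$.

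More precisely, for each index $i$, since $s_i < 1$, I would invoke Proposition \ref{center-singular-time} to select a point $z_i \in M$ such that $(z_i, s_i)$ is an $H_n$-center of $(q,1)$, which by Definition \ref{def.Hcenter} means
\begin{equation*}
\var_{s_i}(\nu_{q;s_i}, \delta_{z_i}) \leq H_n(1 - s_i).
\end{equation*}
This produces the desired sequence $(z_i)_{i\in\N}$.

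Next, I would apply the first inequality in \eqref{wdistance-variance} of Proposition \ref{lsc-var-prop} to the pair $(\nu_{q;s_i}, \delta_{z_i})$, with respect to the metric $d_{s_i}$ on $M$ (which is complete and separable), yielding
\begin{equation*}
d_{W_1}^{s_i}(\nu_{q;s_i}, \delta_{z_i}) \leq \sqrt{\var_{s_i}(\nu_{q;s_i}, \delta_{z_i})} \leq \sqrt{H_n(1 - s_i)}.
\end{equation*}
Since $s_i \nearrow 1$, the right-hand side tends to $0$ as $i \to \infty$, which gives the conclusion.

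There is no real obstacle here: the two ingredients are already in place, and the argument is a two-line application. The only minor point to check is that the first inequality in \eqref{wdistance-variance} applies with one of the measures being a Dirac mass, which is immediate since $\var(\delta_z) = 0$ and the Wasserstein distance is well-defined on the complete separable space $(M, d_{s_i})$.
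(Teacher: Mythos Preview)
Your proposal is correct and follows exactly the same approach as the paper: pick an $H_n$-center $(z_i,s_i)$ of $(q,1)$ via Proposition \ref{center-singular-time}, then use the inequality $d_{W_1}\leq \sqrt{\var}$ from \eqref{wdistance-variance} to bound $d_{W_1}^{s_i}(\nu_{q;s_i},\delta_{z_i})$ by $\sqrt{H_n(1-s_i)}\to 0$.
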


\begin{proof}
For every $i \in \N$ let $(z_{i},s_{i})$ be an $H_{n}$-center of $(q,1)$. Then
\begin{equation*}
d_{W_{1}}^{s_i}(\nu_{q;s_{i}},\delta_{z_{i}})\leq \sqrt{\var_{s_{i}}(\nu_{q;s_{i}},\delta_{z_{i}})}\leq \sqrt{H_{n}(1-s_i)}\longrightarrow 0
\end{equation*}
as $i\to \infty$.
\end{proof}

\subsection{Heat Kernel under Parabolic Rescaling}

Consider a Ricci flow $(M,g_{-t_{0},\lambda}(t))_{t<T}$ induced by a Ricci shrinker $(M,g,f)$ as in Definition \ref{ricci-flow-induced}, where $T$ is the singular time. Let $H(x,t;y,s)$ be the heat kernel of the standard flow $(M,g(t))_{t<1}$ given by Proposition \ref{heat-kernel-thm}. If we define the function 
\begin{equation*}
H^{-t_{0},\lambda}(x,t; y,s)=\lambda^{-\frac{n}{2}}H(x,t_{0}+t/\lambda; y, t_{0}+s/\lambda)
\end{equation*}
for $(x,t; y,s)\in \mathcal{D}^{-t_{0},\lambda}$, where $\mathcal{D}^{-t_{0},\lambda}=\{(x,t;y,s)\colon x,y\in M, -\infty<s<t<\lambda(1-t_{0})\}$, it is easy to check that $H^{-t_{0},\lambda}$ satisfies an appropriate version of Proposition \ref{heat-kernel-thm} with respect to the flow $(M,g_{-t_{0},\lambda}(t))_{t<T}$. Therefore, all the theory seen in the previous sections carries over almost verbatim to this case, up to very minor modifications. 

Indeed, given a point $q\in M$, we can consider the singular heat kernel $v^{-t_{0},\lambda}_{q,T}$ based at $(q,T)$, constructed starting from a sequence of times $\mathsf{t}_{i}\nearrow T$ and taking the limit of (a subsequence of) the functions $H^{-t_{0},\lambda}(q,\mathsf{t}_{i};y,s)$ in $C^{\infty}_{\text{loc}}(M\times (-\infty,T))$. Once $v^{-t_{0},\lambda}_{q,T}$ is defined, we introduce the probability measures $\nu^{-t_{0},\lambda}_{(q,T);t}$ and $\nu^{-t_{0},\lambda}_{(x,t);s}$, where $x\in M$ and $s<t<T$, given by
\begin{equation*}
d\nu^{-t_{0},\lambda}_{(x,t);s}=H^{-t_{0},\lambda}(x,t;y,s)dV_{g_{-t_{0},\lambda}(s)}, \quad \quad 
d\nu^{-t_{0},\lambda}_{(q,T);s}=v^{-t_{0},\lambda}_{q,T}(y,s)dV_{g_{-t_{0},\lambda}(s)}.
\end{equation*}
Note that
\begin{equation*}
d\nu^{-t_{0},\lambda}_{(x,t);s}=d\nu_{(x,t_{0}+t/\lambda);t_{0}+s/\lambda}, \quad \quad d\nu^{-t_{0},\lambda}_{(q,T);s}=d\nu_{(q,1);t_{0}+s/\lambda}
\end{equation*}
and therefore \eqref{standard-prob-variance} holds for $\nu^{-t_{0},\lambda}_{(x,t);s}$ in the same form, while \eqref{conj-prob-variance} translates to
\begin{equation*}
\var_{s}(\nu^{-t_{0},\lambda}_{(q,T);s})\leq H_{n}(T-s),
\end{equation*}
where now the variances are computed with respect to the distance $d_{g_{-t_{0},\lambda}(s)}$.  
Similarly, Proposition \ref{center-singular-time} and Corollary \ref{centers-sequence} can be rephrased as follows.

\begin{prop}[$H_n$-Centers for Rescaled Heat Kernels]\label{centers-rescaling} 
Consider the probability measures $\nu^{-t_{0},\lambda}_{(q,T);s}$ for $s<T$.
\begin{enumerate}
\item For every $s<T$ there exists $z \in M$ such that
\begin{equation*}
\var_{s}(\nu^{-t_{0},\lambda}_{(q,T);s},\delta_{z})\leq H_{n}(T-s).
\end{equation*}
\item Given $s_{i}\nearrow T$ there exists a sequence of points $(z_{i})_{i\in \N}$ such that
\begin{equation*}
\lim_{i\to \infty} d_{W_{1}}^{s_{i}}(\nu^{-t_{0},\lambda}_{(q,T);s_{i}},\delta_{\z_{i}})=0.
\end{equation*}
\end{enumerate}
\end{prop}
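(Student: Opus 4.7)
The plan is to observe that Proposition \ref{centers-rescaling} is just the rescaled analogue of Proposition \ref{center-singular-time} and Corollary \ref{centers-sequence}, and to give a direct proof that exactly mirrors those earlier arguments, using only the variance bound $\var_s(\nu^{-t_0,\lambda}_{(q,T);s}) \leq H_n(T-s)$ that has already been recorded in the paragraph preceding the proposition (together with the identification $d\nu^{-t_0,\lambda}_{(q,T);s} = d\nu_{(q,1);t_0+s/\lambda}$). There is no real obstacle here; the only point to take care of is that all variances and Wasserstein distances must be computed with respect to $d_{g_{-t_0,\lambda}(s)} = \sqrt{\lambda}\, d_{g(t_0+s/\lambda)}$, but this is already built into the definition of $\var_s$ in the rescaled setting.

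For Point 1, I would apply Fubini's theorem exactly as in the proof of Proposition \ref{center-singular-time}:
\begin{equation*}
\var_s(\nu^{-t_0,\lambda}_{(q,T);s}) = \int_M \var_s(\delta_y, \nu^{-t_0,\lambda}_{(q,T);s})\, d\nu^{-t_0,\lambda}_{(q,T);s}(y) \leq H_n(T-s).
\end{equation*}
Since $\nu^{-t_0,\lambda}_{(q,T);s}$ is a probability measure, the integrand must attain a value at most equal to the average at some point $z \in M$, which then gives $\var_s(\nu^{-t_0,\lambda}_{(q,T);s}, \delta_z) \leq H_n(T-s)$, i.e.\ $(z,s)$ is the desired $H_n$-center (in the rescaled flow).

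For Point 2, given $s_i \nearrow T$, I would take $z_i \in M$ to be any point produced by Point 1 applied at time $s_i$. Combining with the first inequality from Proposition \ref{lsc-var-prop}, namely $d_{W_1} \leq \sqrt{\var}$, yields
\begin{equation*}
d_{W_1}^{s_i}(\nu^{-t_0,\lambda}_{(q,T);s_i}, \delta_{z_i}) \leq \sqrt{\var_{s_i}(\nu^{-t_0,\lambda}_{(q,T);s_i}, \delta_{z_i})} \leq \sqrt{H_n(T-s_i)} \longrightarrow 0
\end{equation*}
as $i \to \infty$, which is the claim. The argument is structurally identical to Corollary \ref{centers-sequence}; no additional estimate on the rescaled heat kernel is needed because the variance bound has already been transferred to the rescaled picture.
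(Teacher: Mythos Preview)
Your proposal is correct and follows exactly the approach implicit in the paper: the paper does not even give a separate proof of Proposition \ref{centers-rescaling}, simply stating it as a rephrasing of Proposition \ref{center-singular-time} and Corollary \ref{centers-sequence} in the rescaled setting, and your argument is precisely that rephrasing written out.
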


\subsection{Metric Flows Associated to Ricci Shrinkers}

We now show how to associate a metric flow to a Ricci shrinker. For the main definitions and results regarding metric flows, the reader can consult \cite{Ba20_compactness}.
  
Let $(M,g(t))_{t<T}$ be an ancient Ricci flow induced by a Ricci shrinker $(M,g,f)$, with singular time $T<\infty$. Recall that if $x\in M$ and $t<T$ we introduced the family of probability measures $(\nu_{(x,t);s})_{s< t}$, whose densities are given by the heat kernel.

Given an open interval $I\subseteq (-\infty,T)$, we now construct a metric flow $\X$ over $I$. First of all, let
\begin{equation*}
\X=M\times I
\end{equation*}
and denote by $\mathfrak{t}\colon \X\to I$ the projection onto the second factor. Then each time slice $\X_{t}:=\mathfrak{t}^{-1}(t)$ is given by $\X_{t}=M\times\{t\}\simeq M$, and we endow $\X_{t}$ with the Riemannian distance $d_{t}=d_{g(t)}$, turning it into a complete metric space $(\X_{t},d_{t})$. 

For a point $\x=(x,t) \in \X$ define $\nu_{\x;s}$ as follows:
\begin{equation}\label{prob-mf}
\nu_{\x;s}=\begin{cases} 
\nu_{(x,t);s}\quad &\text{if } s<t; \\
\delta_{x}\quad &\text{if }\ s=t.
\end{cases}
\end{equation}
Then $\nu_{\x;s}\in \mathcal{P}(\X_{s})$ for every $s\in I$, $s\leq t$. Since the notations agree we will often write $\nu_{(x,t);s}$ instead of $\nu_{\x;s}$, explicitly referring to the time $t=\mathfrak{t}(\x)$. 
 
Finally, the desired \emph{metric flow} is the tuple
\begin{equation}\label{def.metricflow}
(\X,\mathfrak{t},(d_{t})_{t\in I},(\nu_{\x;s})_{\x \in \X,s\in I,s\leq \mathfrak{t}(\x)}),
\end{equation}
which will be concisely denoted by $\X$.

\begin{prop} 
$\X$ is a metric flow over the interval $I$ in the sense of Bamler \cite[Definition~3.1]{Ba20_compactness}.
\end{prop}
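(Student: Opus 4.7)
The plan is to verify directly the six conditions of Bamler's Definition~3.1 in \cite{Ba20_compactness} for the tuple \eqref{def.metricflow}, most of which follow from the smooth theory of Ricci shrinkers and the heat kernel already established. Since each $(\X_t, d_t)\simeq(M, d_{g(t)})$ is a smooth complete Riemannian manifold, it is automatically a complete separable metric space (a Polish space), so the metric-space axiom holds, and the time function $\mathfrak{t}\colon\X\to I$ is surjective so the countability condition on $I\setminus\mathfrak{t}(\X)$ is trivial.

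Next I would check the conjugate-heat-kernel axioms. The condition $\nu_{\x;\mathfrak{t}(\x)}=\delta_\x$ holds by definition \eqref{prob-mf}. Continuity $s\mapsto\nu_{\x;s}$ as $s\nearrow\mathfrak{t}(\x)$, together with narrow convergence to $\delta_\x$, follows from the corresponding property of $H$ in Proposition \ref{heat-kernel-thm}(3). Measurability of the assignment $(\x,s)\mapsto\nu_{\x;s}$ as a map into $\mathcal{P}(\X_s)$ follows from the smoothness of $H$ in all variables on $\mathcal{D}$. The crucial reproduction formula — for $\x\in\X$ with $\mathfrak{t}(\x)=t$ and $s\leq t_1\leq t$,
\begin{equation*}
\nu_{\x;s}=\int_{\X_{t_1}}\nu_{\y;s}\,d\nu_{\x;t_1}(\y),
\end{equation*}
is precisely a restatement of the semigroup property \eqref{semigroup} of $H$, after unwinding the definitions in \eqref{prob-mf}.

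The main obstacle, and the only nontrivial verification, is the \emph{gradient estimate} (the sharp Kantorovich-type contraction property), which asks that for all $s<t_1\leq t$ and all $\x_0,\x_1\in\X_{t_1}$,
\begin{equation*}
d_{W_1}^s\bigl(\nu_{\x_0;s},\nu_{\x_1;s}\bigr)\leq d_{t_1}(\x_0,\x_1),
\end{equation*}
together with the stronger $\Phi$-gradient bound phrased in terms of the Gaussian error function. For the Ricci flow associated with a Ricci shrinker this is exactly what Li and Wang established; in the language of the present paper it is encoded in the variance estimate \eqref{standard-prob-variance}, which was transported from \cite[Proposition~3.10]{heatkernel2}. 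I would invoke that estimate together with the coupling argument from \cite[Theorem~3.7]{Ba20_compactness} (which goes through verbatim once one has the standard Bochner-type inequality for heat flow on the underlying smooth manifold; see also \cite[Section~3]{heatkernel2} where this is worked out for Ricci shrinkers without global curvature assumptions).

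Combining these, the tuple \eqref{def.metricflow} satisfies all conditions of \cite[Definition~3.1]{Ba20_compactness}, which completes the proof. The hardest step is the Kantorovich contraction / $\Phi$-gradient estimate, but under our hypotheses this is precisely what has been made available by the work of Li and Wang, so the verification reduces to an application of their results.
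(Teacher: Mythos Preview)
Your overall approach matches the paper's: verify Bamler's axioms directly, noting that most are immediate from the smooth construction and that the reproduction formula is the semigroup property \eqref{semigroup}. The paper likewise dispatches conditions (1)--(5) in one line and spends its effort only on (6) and (7).

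There is, however, a genuine confusion in your treatment of the gradient estimate. You write that the $\Phi$-gradient bound ``is encoded in the variance estimate \eqref{standard-prob-variance}'', but this is not correct: \eqref{standard-prob-variance} is $\var_s(\nu_{(x,t);s})\le H_n(t-s)$, a bound on the spread of a \emph{single} conjugate heat kernel measure. It says nothing about how $\nu_{\x_0;s}$ and $\nu_{\x_1;s}$ compare for distinct points $\x_0,\x_1$, and it does not imply the $W_1$-contraction, let alone the sharp $\Phi$-version required by Bamler's Definition~3.1(6). Nor does a generic ``coupling argument'' recover this from a variance bound. What is actually needed is the statement that if $u$ solves the heat equation with $(0,1)$-valued data $u_s$ satisfying $|\nabla(\Phi_{\mathsf T}^{-1}\circ u_s)|\le 1$, then $|\nabla(\Phi_{\mathsf T+\tau-s}^{-1}\circ u(\cdot,\tau))|\le 1$ for later times. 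The paper obtains this by invoking \cite[Theorem~3.15]{heatkernel2} directly (after using \cite[Lemma~5]{heatkernel} to identify the integral $u_t(x)=\int u_s\,d\nu_{(x,t);s}$ as the unique bounded heat-flow solution), and then lets $\varepsilon\to 0$ to close the estimate. Your vague pointer to ``\cite[Section~3]{heatkernel2}'' lands in the right neighbourhood, but the specific citation you give is the wrong one and would not suffice.
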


\begin{proof}
In view of the previous subsection, we can, without loss of generality, assume that $T=1$, in order simplify notation.

To prove the proposition, we must verify items $(1)-(7)$ in \cite[Definition~3.1]{Ba20_compactness}. However, we notice that Points $(1)-(5)$ follow directly from the above construction, so it only remains to check Points $(6)$ and $(7)$.
\begin{enumerate}\setcounter{enumi}{5}
\item Fix $s<t \in I$, $\mathsf{T}\geq 0$ and let $u_{s}\colon \X_{s}\to [0,1]$ be a measurable function. 
As stated in \cite[Lemma~3.3]{Ba20_compactness}, it is sufficient to consider $\mathsf{T}>0$ and $u_{s}$ with values in $(0,1)$. Suppose that $\Phi^{-1}\circ u_{s}$ is $\mathsf{T}^{-1/2}$-Lipschitz on $\X_{s}$ and denote $\Phi_{t}(x)=\Phi(t^{-1/2}x)$ for $t>0$. The assumption can be rephrased as $|\nabla (\Phi^{-1}\circ u_{s})|\leq \mathsf{T}^{-1/2}$ or, equivalently, $|\nabla (\Phi_{\mathsf{T}}^{-1}\circ u_{s})|\leq 1$ almost everywhere. From \cite[Lemma~5]{heatkernel}, the function
\begin{equation*}
u(x,\tau)=\int_{M}H(x,\tau; y,s)u_{s}(y)dV_{s}(y)=\int_{M}u_{s}(y)d\nu_{(x,\tau);s}(y),
\end{equation*}
with $(x,\tau)\in M\times [s,t]$, is the unique bounded solution of $\square u=0$ on $M\times (s,t]$ with initial value $u_{s}$. Since $\nu_{(x,\tau);s}$ is a probability measure, $u(\cdot,\tau)$ takes values in $(0,1)$ and \cite[Theorem~3.15]{heatkernel2} applies to $u\in C^{\infty}([s+\varepsilon,t])$, where $\varepsilon\in (0,t-s)$, giving
\begin{equation*}
|\nabla (\Phi_{\mathsf{T}+\tau-s-\varepsilon}^{-1}\circ u(\cdot,\tau))|\leq 1,
\end{equation*}
or in other words
\begin{equation*}
|\nabla (\Phi^{-1}\circ u(\cdot,\tau))|\leq (\mathsf{T}+\tau-s-\varepsilon)^{-1/2}
\end{equation*}
for every $\tau \in [s+\varepsilon,t]$. Choosing $\tau=t$ we see that $u_{t}(x)=u(x,t)$ is a $(\mathsf{T}+t-s-\varepsilon)^{-1/2}$-Lipschitz function on $\X_{t}$; letting $\varepsilon \to 0$ it follows that $u_{t}(x)$ is $(\mathsf{T}+t-s)^{-1/2}$-Lipschitz. Finally, noticing that
\begin{equation*}
u_{t}(x)=\int_{\X_{s}}u_{s}(y)d\nu_{(x,t);s}(y),
\end{equation*}
the claim follows. 
\item This is an application of the semigroup property \eqref{semigroup}. Indeed, let $t_{1}\leq t_{2}\leq t_{3}\in I$ and $\x \in \X_{t_{3}}$. We need to show that
\begin{equation}\label{repr-mf}
\nu_{\x;t_{1}}(S)=\int_{\X_{t_{2}}}\nu_{\y;t_{1}}(S)d\nu_{\x;t_{2}}(\y)
\end{equation}
for every Borel set $S\subseteq \X_{t_{2}}$. 

Assuming $t_{1}<t_{2}<t_{3}$ we have
\begin{align*}
\nu_{\x;t_{1}}(S)&=\int_{M}\mathbf{1}_{S}(y)d\nu_{(x,t_{3});t_{1}}(y)=\int_{M}\mathbf{1}_{S}(y)H(x,t_{3},y,t_{1})dV_{t_{1}}(y)\\
&=\int_{M}\mathbf{1}_{S}(y)\left(\int_{M}H(x,t_{3},z,t_{2})H(z,t_{2},y,t_{1})dV_{t_{2}}(z) \right)dV_{t_{1}}(y)\\
&=\int_{M}H(x,t_{3},z,t_{2})\left(\int_{M}\mathbf{1}_{S}(y)H(z,t_{2},y,t_{1})dV_{t_{1}}(y) \right)dV_{t_{2}}(z)\\
&=\int_{M}H(x,t_{3},z,t_{2})\nu_{(z,t_{2});t_{1}}(S)dV_{t_{2}}(z)\\
&=\int_{M}\nu_{(y,t_{2});t_{1}}(S)d\nu_{(x,t_{3});t_{2}}(y)=\int_{\X_{t_{2}}}\nu_{\y;t_{1}}(S)d\nu_{\x;t_{2}}(\y),
\end{align*}
as required. 

The remaining cases, that is when $t_{1}=t_{2}<t_{3}$, $t_{1}<t_{2}=t_{3}$ or $t_{1}=t_{2}=t_{3}$, can be proved similarly, recalling Definition \eqref{prob-mf}.\qedhere
\end{enumerate}
\end{proof}

\begin{defn} 
The metric flow $\X$ over $I$ constructed above is called the \emph{metric flow corresponding to the Ricci flow} $(M,g(t))_{t\in I}$.
\end{defn}

One of the main properties of the metric flow $\X$ is stated in the following result.

\begin{prop}[Properties of Metric Flow $\X$]
The metric flow $\X$ is $H_{n}$-concentrated and continuous in the sense of Bamler \cite[Definitions 3.21 and 4.7]{Ba20_compactness}.
\end{prop}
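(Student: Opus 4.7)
The plan is to reduce both claims to known properties of the heat kernel on Ricci shrinkers that have already been established in Section 2 and Section 3.1, combined with standard manipulations from Bamler's framework.

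For $H_{n}$-concentration (\cite[Definition~3.21]{Ba20_compactness}), the condition to verify is that for all times $s \leq t_{1} \leq t_{2}$ in $I$ and all points $\x_{i} \in \X_{t_{i}}$,
\begin{equation*}
\var_{s}(\nu_{\x_{1};s}, \nu_{\x_{2};s}) \leq H_{n}(t_{2}-s).
\end{equation*}
First I would dispose of the diagonal case $\x_{1} = \x_{2} = \x \in \X_{t}$, which is exactly estimate \eqref{standard-prob-variance} from Proposition \ref{concentration-shrinker} (itself quoted from \cite[Proposition~3.10]{heatkernel2}). The off-diagonal case should then follow by combining this variance bound with the reproduction formula \eqref{repr-mf} already proven in Point (7) of the metric flow axioms. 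Concretely, writing $\nu_{\x_{2};s} = \int_{\X_{t_{1}}} \nu_{\y;s}\, d\nu_{\x_{2};t_{1}}(\y)$ and applying the conditional Jensen-type inequality that controls $\var_{s}(\nu_{\x_{1};s}, \nu_{\x_{2};s})$ in terms of the individual variances plus the pairwise variance at time $t_{1}$ — a standard manipulation in this setting, see \cite[Proposition~3.24]{Ba20_compactness} — one obtains the desired bound.

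For continuity (\cite[Definition~4.7]{Ba20_compactness}), the situation simplifies considerably because all time slices $\X_{t}$ share the same underlying smooth manifold $M$, and the heat kernel $H$ is smooth on its domain $\mathcal{D}$ by Proposition \ref{heat-kernel-thm}. Given any $t \in I$ and any sequence $\x_{i} = (x_{i}, t_{i}) \in \X$ with $t_{i} \to t$ and $x_{i} \to x$ in $M$, smoothness of $H$ implies that $H(x_{i}, t_{i}; \cdot, s) \to H(x, t; \cdot, s)$ locally uniformly on $M$ for every fixed $s < t$. Combining this local convergence with the uniform variance bound \eqref{standard-prob-variance} (which gives tightness of the measures $\nu_{\x_{i};s}$), the argument of Proposition \ref{weakconv} applies verbatim to yield narrow convergence $\nu_{\x_{i};s} \to \nu_{\x;s}$. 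Coupled with the smoothness of $g(t)$ as a Ricci flow (so that $d_{t_{i}} \to d_{t}$ uniformly on compact subsets), this verifies Bamler's continuity criterion at every $t \in I$.

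The main obstacle I anticipate is keeping careful track of the convergence modes and the precise form of Bamler's definitions — in particular, making sure that the off-diagonal variance bound is handled without a gap when $t_{1} < t_{2}$. No genuinely new analytic input should be required beyond what is already available from Section 2 (smoothness and semigroup property of $H$) and Section 3.1 (variance and $H_{n}$-center estimates).
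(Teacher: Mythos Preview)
Your statement of what $H_{n}$-concentration requires is incorrect. Bamler's Definition~3.21 asks that for every $s\leq t$ in $I$ and every pair $\x_{1},\x_{2}\in\X_{t}$ \emph{in the same time slice},
\[
\var_{s}(\nu_{\x_{1};s},\nu_{\x_{2};s}) \;\leq\; d_{t}^{2}(\x_{1},\x_{2}) + H_{n}(t-s).
\]
The $d_{t}^{2}$ term is essential: without it the inequality is plainly false for points that are far apart at time $t$. Your ``diagonal case'' (the single-measure bound \eqref{standard-prob-variance}) is therefore not the relevant input, and the ``off-diagonal'' two-time case you set up does not arise in the definition at all. The paper's proof is a one-liner: \cite[Proposition~3.10]{heatkernel2} already contains the \emph{two-point} bound $\var_{s}(\nu_{(x,t);s},\nu_{(y,t);s})\leq d_{t}^{2}(x,y)+H_{n}(t-s)$, which is precisely the concentration condition. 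You cite the same reference but extract the wrong statement from it.

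For continuity you take a genuinely different route from the paper. The paper observes that (after a time shift) the shrinker flow is a metric soliton in the sense of \cite[Example~3.44]{Ba20_compactness}, and then invokes Proposition~\ref{ms-cont}: for a metric soliton the quantity $t\mapsto\int\!\!\int d_{t}(\x,\y)\,d\mu_{t}(\x)\,d\mu_{t}(\y)=\sqrt{|t|}\int\!\!\int d(x,y)\,d\mu(x)\,d\mu(y)$ is manifestly continuous, and Bamler's characterisation \cite[Theorem~4.9]{Ba20_compactness} then gives continuity of $\X$. Your direct approach via smoothness of $H$ and narrow convergence of the $\nu_{\x_{i};s}$ is plausible, but what you describe checks convergence of conjugate heat kernels based at varying points, which is not the content of \cite[Definition~4.7]{Ba20_compactness}; you would still need to bridge to the actual continuity criterion (e.g.\ via \cite[Theorem~4.9]{Ba20_compactness}), and that bridge is where the real work lies. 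The paper's soliton argument sidesteps this entirely.
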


\begin{proof}
Let $s\leq t\in I$ and $\x,\y\in \X_{t}$; then \cite[Proposition~3.10]{heatkernel2} gives
\begin{equation*}
\var_{s}(\nu_{\x;s},\nu_{\y;s})=\var_{s}(\nu_{(x,t);s},\nu_{(y,t);s})\leq d_{t}^{2}(x,y)+H_{n}(t-s)
\end{equation*}
and the $H_{n}$-concentration follows.

For the continuity statement, suppose first that $(M,g(t))_{t<0}$ is obtained by time-shifting the Ricci flow associated with the shrinker. Then, as shown in \cite[Example~3.44]{Ba20_compactness}, it is possible to define a heat flow $(\mu_{t})_{t<0}$ on $\X$ so that $(\X,(\mu_{t})_{t<0})$ is a metric soliton. This implies that $\X$ is continuous, see Proposition \ref{ms-cont}, and the general case easily follows. 
\end{proof}

So far, in this subsection, the singular time heat kernel and the associated measure did not play any role, but they will show up now in the context of conjugate heat flows and metric flow pairs.  

Fix a point $q\in M$ and let $t_{\max}=\sup I \leq T$; 
recall that we have defined the family of probability measures $(\nu_{(q,t_{\max});t})_{t<t_{\max}}$,
with the \emph{caveat} that we use the singular time heat kernel $v_{q,T}$ when $t_{\max}=T$. For any $t\in I$ define
\begin{equation*}
\mu_{t}=\nu_{(q,t_{\max});t}.
\end{equation*}

\begin{prop}[Conjugate Heat Flow]
$(\mu_{t})_{t\in I}$ is a conjugate heat flow on the metric flow $\X$ in the sense of Bamler \cite[Definition 3.13]{Ba20_compactness}.
\end{prop}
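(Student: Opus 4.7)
The plan is to verify the two defining conditions of a conjugate heat flow from \cite[Definition~3.13]{Ba20_compactness}: first that $\mu_t\in\mathcal{P}(\X_t)$ for every $t\in I$, and second that the reproduction formula
\begin{equation*}
\mu_{t_1}(S)=\int_{\X_{t_2}}\nu_{\y;t_1}(S)\,d\mu_{t_2}(\y)
\end{equation*}
holds for all $t_1\leq t_2$ in $I$ and every Borel set $S\subseteq \X_{t_1}$. Since $I$ is open and $t_{\max}=\sup I$, we always have $t<t_{\max}$, so $\mu_t$ has a smooth density $w(\cdot,t)$ with respect to $dV_t$: either $w(\cdot,t)=H(q,t_{\max};\cdot,t)$ if $t_{\max}<T$, or $w(\cdot,t)=v_{q,T}(\cdot,t)$ if $t_{\max}=T$.

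The first condition splits accordingly: in the regular case $t_{\max}<T$ it is nothing but \eqref{integral-s} from Proposition \ref{heat-kernel-thm}, while in the singular case $t_{\max}=T$ it is the content of Proposition \ref{singular-heat-kernel-existence}, which guarantees that $v_{q,T}(\cdot,t)$ is a probability density.

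For the reproduction formula, the case $t_1=t_2$ is immediate from \eqref{prob-mf}, so I may assume $t_1<t_2$. Unpacking the right hand side in density form gives
\begin{equation*}
\int_{\X_{t_2}}\nu_{\y;t_1}(S)\,d\mu_{t_2}(\y)=\int_{M}\left(\int_{M}\mathbf{1}_S(y)H(z,t_2;y,t_1)\,dV_{t_1}(y)\right)w(z,t_2)\,dV_{t_2}(z).
\end{equation*}
Since all integrands are nonnegative and measurable, Tonelli's theorem allows me to exchange the order of integration. The inner integral then becomes $\int_M H(z,t_2;y,t_1)w(z,t_2)\,dV_{t_2}(z)$, which equals $w(y,t_1)$ by either the standard semigroup property \eqref{semigroup} applied to $w=H(q,t_{\max};\cdot,\cdot)$ (when $t_{\max}<T$), or by its singular time analogue \eqref{semigroup-sing} from Proposition \ref{prop.semigroup} (when $t_{\max}=T$). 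Collecting terms produces $\int_M \mathbf{1}_S(y)\,w(y,t_1)\,dV_{t_1}(y)=\mu_{t_1}(S)$, as required.

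The only delicate point is the singular case $t_{\max}=T$: here neither the existence of a probability density nor its compatibility with the regular-time semigroup are automatic, since $v_{q,T}$ was constructed merely as a $C^\infty_{\text{loc}}$-limit. This is precisely what Propositions \ref{singular-heat-kernel-existence} and \ref{prop.semigroup} were set up to deliver, so once they are invoked the verification reduces to the Tonelli exchange described above.
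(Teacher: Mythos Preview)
Your proof is correct and follows essentially the same approach as the paper's own proof: both reduce the reproduction formula to the semigroup property of the heat kernel, distinguishing the regular case $t_{\max}<T$ (where \eqref{semigroup} applies) from the singular case $t_{\max}=T$ (where \eqref{semigroup-sing} is needed). The only cosmetic difference is that in the regular case the paper invokes the already-established reproduction formula \eqref{repr-mf} for $\X$ directly, whereas you re-run the density computation in both cases; this is a matter of presentation, not substance.
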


\begin{proof}
For simplicity assume once again, without loss of generality, that $T=1$.

Given $s\leq t \in I$ we need to prove that
\begin{equation}\label{conj-heat-kernel-repr}
\mu_{s}(S)=\int_{\X_{t}}\nu_{\x;s}(S)d\mu_{t}(\x)
\end{equation}
for every $S\subseteq \X_{s}$ Borel set. 

If $t_{\max}<1$, this is simply a restatement of the reproduction formula \eqref{repr-mf}, with $t_{1}=s$, $t_{2}=t$, $t_{3}=t_{\max}$ and $\x=(q,t_{\max})$, since in this case $\nu_{\x;s}=\mu_{s}$ and $\nu_{\x;t}=\mu_{t}$.

Suppose $t_{\max}=1$, and thus $\mu_{s}$ is the heat kernel measure based at the singular point $(q,t_{\max})=(q,1)$. In this case, the claim will follow from the semigroup property \eqref{semigroup-sing}; indeed
\begin{align*}
\mu_{s}(S)&=\nu_{(q,1);s}(S)=\int_{M}\mathbf{1}_{S}(y)d\nu_{(q,1);s}(y)=\int_{M}\mathbf{1}_{S}(y)v_{q}(y,s)dV_{s}(y)\\
&=\int_{M}\mathbf{1}_{S}(y)\left(\int_{M}H(z,t; y,s)v_{q}(z,t)dV_{t}(z) \right)dV_{s}(y)\\
&=\int_{M}v_{q}(z,t)\left(\int_{M}\mathbf{1}_{S}(y)H(z,t; y,s)dV_{s}(y) \right)dV_{t}(z)\\
&=\int_{M}v_{q}(z,t) \nu_{(z,t);s}(S)dV_{t}(z)
=\int_{M}\nu_{(y,t);s}(S)d\nu_{(q,1);t}(y)=\int_{\X_{t}}\nu_{\y;s}(S)d\mu_{t}(\y),
\end{align*}
as required.
\end{proof}

Using the terminology of \cite[Definition~3.21, Definition~5.1]{Ba20_compactness}, and recalling Proposition \ref{concentration-shrinker}, we can state the following immediate summary.

\begin{cor}[Metric Flow Pair] 
The pair $(\X, (\mu_{t})_{t\in I})$ is an $H_{n}$-concentrated metric flow pair over $I$ that is fully defined over $I$.
\end{cor}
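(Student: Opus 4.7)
The corollary bundles three assertions about the pair $(\X,(\mu_t)_{t\in I})$, all of which have essentially been established in the preceding propositions, so my plan is to identify which earlier result supplies each piece of the statement rather than to produce any new computation.

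First, I would observe that the proposition showing that the tuple in \eqref{def.metricflow} is a metric flow over $I$, combined with the proposition showing that $(\mu_t)_{t\in I}$ is a conjugate heat flow on $\X$, is exactly the data required by \cite[Definition~5.1]{Ba20_compactness} to speak of a metric flow pair over $I$. The $H_n$-concentration of the pair is inherited from the underlying metric flow $\X$, which was shown to be $H_n$-concentrated (and in fact continuous) via the bound $\var_s(\nu_{\x;s},\nu_{\y;s})\leq d_t^2(x,y)+H_n(t-s)$ coming from \cite[Proposition~3.10]{heatkernel2}.

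The only remaining item is that the pair is \emph{fully defined} over $I$, which in Bamler's terminology means that the conjugate heat flow is defined at every $t\in I$ and has finite variance there. The first requirement holds by construction, since $\mu_t=\nu_{(q,t_{\max});t}$ is defined for every $t\in I$, using the regular-time heat kernel when $t_{\max}<T$ and the singular-time heat kernel $v_{q,T}$ when $t_{\max}=T$. The finite variance is exactly Proposition \ref{concentration-shrinker}: in the regular case $t_{\max}<T$, part~(1) gives $\var_t(\mu_t)\leq H_n(t_{\max}-t)$, while in the singular case $t_{\max}=T$, part~(2) yields $\var_t(\mu_t)\leq H_n(T-t)$.

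Since each ingredient is an invocation of an already-proved statement, I do not expect any genuine obstacle; the main thing to be careful about is simply to treat the two cases $t_{\max}<T$ and $t_{\max}=T$ uniformly, having set things up in the previous subsection precisely so that the regular-time and singular-time heat-kernel measures fit into the same notation. The corollary is therefore genuinely a bookkeeping statement that packages the object we have constructed so that it fits into Bamler's framework and can be fed into the $\F$-convergence machinery used in subsequent sections.
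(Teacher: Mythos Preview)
Your approach is essentially the same as the paper's, which treats the corollary as an immediate summary of the preceding propositions together with Proposition~\ref{concentration-shrinker}. One point needs correction, though: your description of ``fully defined over $I$'' is not quite Bamler's. In \cite[Definition~5.1]{Ba20_compactness}, a metric flow pair over $I$ consists of a metric flow over some $I'\subseteq I$ with $|I\setminus I'|=0$ together with a conjugate heat flow $(\mu_t)_{t\in I'}$ satisfying $\supp\mu_t=\X_t$ for every $t\in I'$; being \emph{fully defined over $I$} simply means $I'=I$. Finite variance of $\mu_t$ is not part of that definition (though it is of course true here and is used immediately afterwards in \eqref{var-metric-flow}--\eqref{var-goes-to-zero}).

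What you are missing is the full-support condition $\supp\mu_t=\X_t$, which is part of the definition of a metric flow pair. This was established in Corollary~\ref{cor.positivity}: the singular-time heat kernel $v_q$ is strictly positive, so $\nu_{q;s}$ is equivalent to $dV_s$ and hence has full support; the regular-time case is the positivity of $H$. The paper's remark following the corollary gives an alternative route via \cite[Proposition~3.16]{Ba20_compactness}. Once you add this ingredient, your argument is complete and matches the paper's.
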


\begin{rem} The fact that $\mu_{t}$ has full support can also be seen as a consequence of \cite[Proposition~3.16]{Ba20_compactness}. Indeed, $\mu_{t}$ and $\nu_{(x,\tau);t}$ have the same support for $t<\tau<t_{\max}$ and $\supp \nu_{(x,\tau);t}=M$ by positivity of the heat kernel.
\end{rem}

\begin{defn} 
The metric flow pair $(\X, (\mu_{t})_{t\in I})$ over $I$ is called the \emph{metric flow pair corresponding to the pointed Ricci flow} $(M,g(t), (q,t_{\max}))_{t\in I}$; we also say that $(q,t_{\max})$ is the \emph{basepoint} of the metric flow pair.
\end{defn}

\begin{rem} 
We underline that, in defining the metric flow pair of a Ricci flow, its basepoint plays a fundamental role, since it specifies which is the conjugate heat flow to consider. The basepoint is always a space-time point from $M\times\{t_{\max}\}$.
\end{rem}

Note that thanks to the definition, $\mu_{t}$ satisfies a concentration property as well, since
\begin{equation}\label{var-metric-flow}
\var_{t}(\mu_{t})\leq H_{n}(t_{\max}-t)
\end{equation}
for every $t\in I$. In particular, 
\begin{equation}\label{var-goes-to-zero}
\lim_{t\nearrow t_{\max}}\var_{t}(\mu_{t})=0,
\end{equation}
and therefore (see \cite[Remark~5.9]{Ba20_compactness}) we can extend the metric flow pair $(\X,(\mu_{t})_{t\in I})$ to the final time $t_{\max}$, obtaining the \emph{extended} metric flow pair $(\X,(\mu_{t})_{t\in I\cup\{t_{\max}\}})$, which is fully defined over $I\cup\{t_{\max}\}$. In particular, $\X_{t_{\max}}=\{\x_{\max}\}$, $\mu_{t_{\max}}=\delta_{\x_{\max}}$ and $\nu_{\x_{\max};t}=\mu_{t}$ for every $t\in I\cup\{t_{\max}\}$, where $\x_{\max}=(q,t_{\max})$.

\begin{rem} 
When $I=(-\infty,0)$ and $T>0$, the extended metric flow pairs $(\X,(\mu_{t})_{t \leq 0 })$ are exactly the ones considered in \cite[Section~6]{heatkernel2}.
\end{rem}

Following \cite[Definition~3.24]{Ba20_compactness} one can introduce the concept of $H$-center of a metric flow. From this point of view, \cite[Proposition~3.12]{heatkernel2} is a special instance of \cite[Proposition~3.25]{Ba20_compactness}, being implied by the $H_{n}$-concentration of $\X$. Similarly, Proposition \ref{center-singular-time} and Corollary \ref{centers-sequence} can be rephrased in the present context as follows.

\begin{prop}[$H_n$-Centers of Metric Flow Pair]\label{centers-metric-flow} 
Consider the metric flow pair $(\X, (\mu_{t})_{t\in I})$ over $I$.
\begin{enumerate}
\item For every $t\in I$ there exists $\z\in \X_{t}$ such that
\begin{equation*}
\var_{t}(\mu_{t},\delta_{\z})\leq H_{n}(t_{\max}-t).
\end{equation*}
\item Given $t_{i}\nearrow t_{\max}$ there exists a sequence of points $(\z_{i})_{i\in \N}$, with $\z_{i}\in \X_{t_{i}}$, such that 
\begin{equation*}
\lim_{i\to \infty} d^{\X_{t_{i}}}_{W_{1}}(\mu_{t_{i}},\delta_{\z_{i}})=0.
\end{equation*}
\end{enumerate}
\end{prop}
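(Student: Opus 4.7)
The plan is to mimic the arguments already used in Proposition \ref{center-singular-time} and Corollary \ref{centers-sequence}, now cast in the language of metric flows. The only inputs needed are the variance bound \eqref{var-metric-flow} for the conjugate heat flow $(\mu_t)_{t\in I}$ and the general inequality \eqref{wdistance-variance} relating the Wasserstein distance to the variance.

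For the first statement, I would fix $t\in I$ and, by the definition of the variance of a probability measure on a complete separable metric space together with Fubini's theorem, write
\begin{equation*}
\var_t(\mu_t) = \int_{\X_t}\int_{\X_t} d_t^{2}(\x,\y)\, d\mu_t(\x)\, d\mu_t(\y) = \int_{\X_t} \var_t(\mu_t,\delta_\y)\, d\mu_t(\y).
\end{equation*}
Combining this with \eqref{var-metric-flow} gives $\int_{\X_t}\var_t(\mu_t,\delta_\y)\,d\mu_t(\y) \leq H_n(t_{\max}-t)$. Since $\mu_t$ is a probability measure, the nonnegative integrand $\y\mapsto\var_t(\mu_t,\delta_\y)$ must be bounded by $H_n(t_{\max}-t)$ on a set of positive $\mu_t$-measure; any such point $\z\in\X_t$ serves as the required $H_n$-center.

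For the second statement, I would apply the first part at each $t_i$ to obtain points $\z_i\in\X_{t_i}$ with $\var_{t_i}(\mu_{t_i},\delta_{\z_i})\leq H_n(t_{\max}-t_i)$. Inequality \eqref{wdistance-variance} from Proposition \ref{lsc-var-prop} then yields
\begin{equation*}
d^{\X_{t_i}}_{W_1}(\mu_{t_i},\delta_{\z_i}) \leq \sqrt{\var_{t_i}(\mu_{t_i},\delta_{\z_i})} \leq \sqrt{H_n(t_{\max}-t_i)},
\end{equation*}
which tends to $0$ as $i\to\infty$ because $t_i\nearrow t_{\max}$.

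There is essentially no obstacle here: the proof is a direct transcription of the earlier arguments into the metric flow framework, and the nontrivial analytic work has already been absorbed in the variance bound \eqref{var-metric-flow}. Alternatively, since $\X$ has already been shown to be $H_n$-concentrated and $(\mu_t)_{t\in I}$ is a conjugate heat flow, the statement may also be obtained as a special case of the general existence result for $H$-centers on $H$-concentrated metric flows, \cite[Proposition~3.25]{Ba20_compactness}.
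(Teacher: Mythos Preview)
Your proposal is correct and matches the paper's approach exactly: the paper does not give a separate proof of this proposition but states it as a direct rephrasing of Proposition~\ref{center-singular-time} and Corollary~\ref{centers-sequence} in the metric flow language, using the variance bound \eqref{var-metric-flow}. Your alternative via \cite[Proposition~3.25]{Ba20_compactness} is also noted by the paper in the paragraph immediately preceding the statement.
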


Finally, we recall that if $(\X,(\mu_{t})_{t\in I'})$ is a general metric flow pair over an interval $I\subseteq \R$, given $t_{0}\in \R$ and $\lambda>0$ then $(\X^{-t_{0},\lambda},(\mu^{-t_{0},\lambda}_{t})_{t\in I'^{-t_{0},\lambda}})$ denotes its corresponding parabolic rescaling. For more about this concept see the Appendix \ref{parab-rescaling-mf-app} or \cite{Ba20_compactness}. 

\section{$\mathbb{F}$-Convergence of Ricci Shrinkers}\label{conv-result-section}

We are ready to prove Theorem \ref{scaling-compactness-thm-intro} from the introduction. As already mentioned, this result is analogous to \cite[Theorem~6.10]{heatkernel2}, and the main difference relies in the fact that in our case the final time of the metric flow pairs under consideration can be given by the \emph{singular} time of the flow. As we will see, this is not a major issue, since the proof of the theorem uses facts regarding general metric flow pairs.

For the relevant notation appearing in the statement of the theorem and its proof, recall Section~6.1 and 7.1 (and in particular Definitions 6.1 and 7.1) of Bamler \cite{Ba20_compactness}.

\begin{proof}[Proof of Theorem \ref{scaling-compactness-thm-intro}]
Let $(M_{k}^{n}, g_{k}(t))_{t<T_{k}}$, $k\in \N$, be a sequence of Ricci flows induced by Ricci shrinkers and defined over a maximal time-interval of the form $(-\infty,T_{k})$, where $0\leq T_{k}<\infty$. Given points $q_{k}\in M_{k}$, since $I=(-\infty,0)\subseteq (-\infty,T_{k})$ we can form the metric flow pairs $(\X^{k},(\mu^{k}_{t})_{t<0})$ corresponding to the pointed Ricci flows $(M_{k},g_{k}(t),(q_{k},0))_{t<0}$ as explained in the previous section. 

Furthermore, we can extend each of the above metric flow pair to the final time $t=0$, thus obtaining an extended metric flow pair $(\X^{k}, (\mu^{k}_{t})_{t\leq 0})$ fully defined over $(-\infty,0]$. 

Following the notation from \cite[Section~7.1]{Ba20_compactness}, we have $I^{k}=I^{\infty}=(-\infty,0]$ for every $k\in \N$ and $t_{\text{max}}=0\in I^{\infty}$. So we only need to verify that $(\X^{k},(\mu^{k}_{t})_{t\leq 0})\in \mathbb{F}_{(-\infty,0]}^{t_{\text{max}}}(H_{n},V,b,r)$ for some choice of $V\geq 0,r>0$ and $b\colon (0,1]\to (0,1]$. First of all, we have already proved that every metric flow $\X^{k}$ is $H_{n}$-concentrated. Second, we trivially have $(\X_{0},d_{0},\mu_{0})\in \mathbb{M}_{r}(V,b)$ for $V=0$ and $r>0$, $b\colon (0,1]\to (0,1]$ chosen arbitrarily, since $\X_{0}$ is a singleton. At this point we can apply the general compactness result from \cite[Theorem~7.6]{Ba20_compactness} and conclude. 
\end{proof}
 
\begin{rem}
In \cite[Theorem~6.10]{heatkernel2}, the parts regarding $\F^{\ast}_{(-\infty,0]}(H_{n})$ and the uniqueness of $\X^{\infty}$ are missing, but nevertheless they hold true since their theorem and ours derive from the same general result about metric flow pairs. 
\end{rem}

\begin{rem}
Note that, since $\X^{\infty}_{0}$ is a single point, $\var_{0}(\mu^{\infty}_{0})=0$ and therefore we also find $\var_{t}(\mu^{\infty}_{t})\leq H_{n}|t|$ for every $t\leq 0$ by \cite[Proposition~3.23]{Ba20_compactness}. 
\end{rem}

We end this section with a quite general consequence of $\F$-convergence; see Definition \ref{gromov-meas-conv} for the notion of pointed Gromov-measured convergence. 

\begin{prop}[Gromov-Measured Convergence]\label{conv-centers} 
Consider metric flow pairs $(\X^{k},(\mu^{k}_{t})_{t\leq 0}) \in \F^{\ast}_{(-\infty,0]}(H)$, for $k\in \N\cup\{\infty\}$ and some uniform $H>0$, such that 
\begin{equation*}
(\X^{k},(\mu^{k}_{t})_{t\leq 0}) \longrightarrow (\X^{\infty},(\mu^{\infty}_{t})_{t\leq 0})
\end{equation*} 
within a correspondence $\C$ on compact time-intervals. Suppose that the convergence is uniform at some time $t<0$, and let $\z_{k}\in \X^{k}_{t}$ be a sequence of $H$-centers of $\x^{k}_{0}\in \X^{k}_{0}$. Then there exist points $\y_{k} \in \X^{k}_{t}$ and $\y_{\infty}\in \X^{\infty}_{t}$ such that $\y_{k}\to \y_{\infty}$ strictly within $\C$ and $d^{k}_{t}(\z_{k},\y_{k})\leq r$ for some positive constant $r>0$. In particular, 
\begin{equation*}
(\X^{k}_{t},d^{k}_{t},\mu^{k}_{t},\y_{k})\longrightarrow (\X^{\infty}_{t},d^{\infty}_{t},\mu^{\infty}_{t},\y_{\infty})
\end{equation*}
in the pointed Gromov-measured sense.
\end{prop}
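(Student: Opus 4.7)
The plan is to transfer everything into the common metric space $Z_t$ furnished by the correspondence $\C$ at time $t$. Let $\varphi^k_t\colon \X^k_t\hookrightarrow Z_t$ be the isometric embeddings and write $\tilde\mu^k_t=(\varphi^k_t)_\ast\mu^k_t$ and $\tilde\z_k=\varphi^k_t(\z_k)$, with analogous notation for $k=\infty$. By definition of uniform convergence within $\C$ at time $t$, we have $d^{Z_t}_{W_1}(\tilde\mu^k_t,\tilde\mu^\infty_t)\to 0$. Since $\z_k$ is an $H$-center of $\x^k_0$ and $\nu_{\x^k_0;t}=\mu^k_t$ in the extended metric flow pair, Proposition \ref{lsc-var-prop} gives
\[
d^{Z_t}_{W_1}(\tilde\mu^k_t,\delta_{\tilde\z_k}) \leq \sqrt{\var_t(\mu^k_t,\delta_{\z_k})} \leq \sqrt{H|t|}=:C.
\]
Invoking \cite[Proposition~3.25]{Ba20_compactness} for the $H$-concentrated flow $\X^\infty$, pick an $H$-center $\z_\infty \in \X^\infty_t$ of $\x^\infty_0$, so that $d^{Z_t}_{W_1}(\tilde\mu^\infty_t,\delta_{\tilde\z_\infty})\leq C$. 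A triangle inequality then yields the uniform bound
\[
d_{Z_t}(\tilde\z_k,\tilde\z_\infty) = d^{Z_t}_{W_1}(\delta_{\tilde\z_k},\delta_{\tilde\z_\infty}) \leq 2C + d^{Z_t}_{W_1}(\tilde\mu^k_t,\tilde\mu^\infty_t),
\]
so the sequence $\tilde\z_k$ stays in a bounded neighbourhood of $\tilde\z_\infty$ in $Z_t$.

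Next, I would construct $\y_\infty$ and the $\y_k$ by a support-tracking argument. Markov's inequality applied to the variance bound shows $\tilde\mu^\infty_t(B(\tilde\z_\infty,2C))\geq 3/4$, so this ball intersects $\supp \tilde\mu^\infty_t$; pick $\y_\infty\in \X^\infty_t$ with $\tilde\y_\infty := \varphi^\infty_t(\y_\infty) \in \supp\tilde\mu^\infty_t\cap B(\tilde\z_\infty,2C)$. Since $W_1$-convergence implies narrow convergence, the Portmanteau inclusion $\supp\tilde\mu^\infty_t \subseteq \liminf \supp\tilde\mu^k_t$ allows us to select $\y_k\in \X^k_t$ with $\tilde\y_k := \varphi^k_t(\y_k) \in \supp\tilde\mu^k_t$ and $\tilde\y_k \to \tilde\y_\infty$ in $Z_t$. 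By definition this is exactly the desired strict convergence $\y_k\to\y_\infty$ within $\C$. A final triangle inequality gives
\[
d^k_t(\z_k,\y_k) = d_{Z_t}(\tilde\z_k,\tilde\y_k) \leq d_{Z_t}(\tilde\z_k,\tilde\z_\infty) + d_{Z_t}(\tilde\z_\infty,\tilde\y_\infty) + d_{Z_t}(\tilde\y_\infty,\tilde\y_k) \leq 4C + o(1),
\]
so we may take $r = 5C$ (say) and discard finitely many indices if necessary.

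Pointed Gromov-measured convergence of the time slices is then essentially immediate: the isometric embeddings $\varphi^k_t$, the $W_1$-convergence $\tilde\mu^k_t \to \tilde\mu^\infty_t$ (which implies narrow convergence), and the convergence $\tilde\y_k \to \tilde\y_\infty$ of the chosen basepoints are exactly the data required by Definition~\ref{gromov-meas-conv}. The main subtlety in the argument is the support-tracking step: one needs that narrow convergence of probability measures implies the Kuratowski lower-limit inclusion for supports, so that the $\y_k$ can be chosen inside $\supp \mu^k_t \subseteq \varphi^k_t(\X^k_t)$ rather than in a larger subset of $Z_t$. This is routine via Portmanteau, but it is the only place where one has to distinguish between the intrinsic metric flow and its image in the ambient correspondence.
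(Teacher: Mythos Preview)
Your argument is correct and follows essentially the same strategy as the paper's proof: use the $H$-center concentration to localise the measures near the $\z_k$, then extract convergent basepoints from the resulting mass bounds. The paper is terser: it notes that uniform convergence at time $t$ gives $GW_1$-convergence of the time slices (quoting \cite[Remark~5.6]{Ba20_compactness}), invokes \cite[Lemma~3.26]{Ba20_compactness} to obtain $\mu^k_t(\overline{B}(\z_k,r))\geq 9/10$ for $r=\sqrt{10H|t|}$ and all $k\in\N\cup\{\infty\}$, and then simply refers to the coupling contradiction argument of Proposition~\ref{GW-pGm} to produce the convergent points $\y_k\to\y_\infty$ inside these balls.

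The genuine difference is in how you find the convergent basepoints. The paper's route (via Proposition~\ref{GW-pGm}) uses the optimal couplings $q_k$ realising the $GW_1$-convergence and argues by contradiction that the high-mass balls must eventually intersect. Your route instead bounds $d_{Z_t}(\tilde\z_k,\tilde\z_\infty)$ directly via the triangle inequality for $W_1$, then uses the Portmanteau/Kuratowski lower-limit inclusion $\supp\tilde\mu^\infty_t\subseteq\liminf\supp\tilde\mu^k_t$ to pull back a point $\y_\infty\in\supp\mu^\infty_t$ to approximating $\y_k$. Both are standard; yours is perhaps more transparent and avoids the auxiliary Proposition~\ref{GW-pGm}, while the paper's buys a slightly sharper constant and keeps the argument self-contained within the coupling framework already set up. One small remark: when you invoke the support inclusion, you implicitly use that $\varphi^k_t(\X^k_t)$ is closed in $Z_t$ (so that $\supp\tilde\mu^k_t\subseteq\varphi^k_t(\X^k_t)$ and your $\y_k$ really lives in $\X^k_t$); this holds because the time slices are complete, but it is worth stating.
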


\begin{proof}
Since the convergence is uniform at time $t$, we have
\begin{equation*}
(\X^{k}_{t},d^{k}_{t},\mu^{k}_{t})\longrightarrow (\X^{\infty}_{t},d^{\infty}_{t},\mu^{\infty}_{t})
\end{equation*}
in the Gromov-Wasserstein $GW_{1}$-sense; see \cite[Remark~5.6]{Ba20_compactness}. Now, if $\z_{\infty}\in \X^{\infty}_{t}$ is an $H$-center of $\x^{\infty}_{0} \in \X^{\infty}_{0}$ and $r=\sqrt{10H|t|}$, we have $\mu_{t}^{k}(\overline{B}(\z_{k},r))\geq 9/10$ for every $k\in \N\cup\{\infty\}$, thanks to \cite[Lemma~3.26]{Ba20_compactness}. At this point one can proceed as in the proof of Proposition \ref{GW-pGm} to conclude.
\end{proof}

\section{Regularity Results}\label{reg-results-section}

In this section we prove Theorem \ref{good-convergence-intro} as well as the additional structure theory for tangent flows of the limit metric flow mentioned in the introduction. The idea is to proceed as in \cite[Sections~2.3 and 19]{Ba20_structure}, that is reducing the convergence of a sequence $(\X^{k},(\mu^{k}_{t})_{t\leq 0})$ of metric flow pairs as in Section \ref{conv-result-section} to the one of metric flow pairs obtained from Ricci flows defined up to the final time, and so fitting into the setting of Li and Wang's results \cite[Section~6]{heatkernel2}. We start with some lemmas that could be of independent interest. 

\subsection{Some Preliminary Lemmas}\label{some-lemmas-section}

In the following, we work with general metric flow pairs, not necessarily induced by Ricci shrinkers. 
The first result allows us to extend a given $\F$-convergence within a correspondence (as defined in \cite[Definition~6.1]{Ba20_compactness}) on $(-\infty,0)$ to $(-\infty,0]$.   

\begin{lemma}[Extending $\F$-Convergence to the Final Time]\label{extension-conv} 
Let $(\X^{i},(\mu^{i}_{t})_{t\leq 0})$, $i\in \N\cup\{\infty\}$, be metric flow pairs fully defined over $(-\infty,0]$. Suppose there exists a correspondence $\C$ over $(-\infty,0)$ between the metric flows $\X^{i}_{t<0}$, $i\in \N\cup\{\infty\}$, such that
\begin{equation*}
(\X^{i}_{t< 0},(\mu^{i}_{t})_{t<0}) \longrightarrow (\X^{\infty}_{t<0}, (\mu^{\infty}_{t})_{t<0})
\end{equation*}
within $\C$ on compact time-intervals of $(-\infty,0)$. Then $\C$ can be extended to a correspondence $\widetilde{\C}$ over $(-\infty,0]$ such that 
\begin{equation*}
(\X^{i},(\mu^{i}_{t})_{t \leq 0}) \longrightarrow (\X^{\infty}, (\mu^{\infty}_{t})_{t \leq 0})
\end{equation*}
within $\widetilde{\C}$ on compact time-intervals of $(-\infty,0]$.
\end{lemma}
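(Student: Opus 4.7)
The key observation unlocking the proof is that, since the metric flow pairs are fully defined at the final time $t=0$ in the sense of extended metric flow pairs used throughout the paper, each slice $\X^{i}_{0}$ is forced to be a singleton $\{\x^{i}_{0}\}$ with $\mu^{i}_{0}=\delta_{\x^{i}_{0}}$, as recalled in the discussion surrounding \eqref{var-goes-to-zero}. My plan is then to set $\widetilde{\C}$ equal to $\C$ on $(-\infty,0)$ and to augment it at $t=0$ by picking a one-point metric space $Z_{0}=\{z_{0}\}$ together with the trivial isometric embeddings $\varphi^{i}_{0}\colon \X^{i}_{0}\to Z_{0}$ sending $\x^{i}_{0}\mapsto z_{0}$ for every $i\in \N\cup\{\infty\}$. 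With this choice all pushforwards $\varphi^{i}_{0\ast}\mu^{i}_{0}$ coincide with $\delta_{z_{0}}$, so a canonical coupling of the time-$0$ measures is immediately available.

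The only genuinely new case for $\F$-convergence within $\widetilde{\C}$ on compact time-intervals of $(-\infty,0]$ is an interval of the form $J=[a,0]$. I would split $J=[a,-\delta]\cup[-\delta,0]$ and estimate the contributions to the $\F$-distance separately. On the subinterval $[a,-\delta]\subseteq (-\infty,0)$, the hypothesised convergence within $\C$ on compact subintervals provides, for every $i$ large enough, an arbitrarily small bound. On the short tail $[-\delta,0]$, the concentration estimate $\var_{t}(\mu^{i}_{t})\leq H|t|$, valid for all $i\in \N\cup\{\infty\}$ because $\mu^{i}_{0}=\delta_{\x^{i}_{0}}$ and the flows are $H$-concentrated, combined with Proposition \ref{lsc-var-prop}, forces $d_{W_{1}}^{\X^{i}_{t}}(\mu^{i}_{t},\delta_{\z^{i}_{t}})\leq \sqrt{H|t|}$ for any $H$-center $\z^{i}_{t}\in \X^{i}_{t}$. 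Through the triangle inequality in each metric space $Z_{t}$, this yields a bound of order $\sqrt{\delta}$ on the tail contribution to $d_{\F}^{\widetilde{\C},J}$, uniform in $i\in \N\cup\{\infty\}$. Choosing first $\delta$ small enough and then $i$ large enough then gives the desired convergence.

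The main technical obstacle will be making rigorous the above subadditive decomposition of the $\F$-distance in terms of the precise definition from \cite[Section~5]{Ba20_compactness}: one needs to check that an almost optimal coupling realising the convergence on $[a,-\delta]$ can be completed to a coupling over the full interval $[a,0]$ without introducing a large correction. Since at $t=0$ all pushforwards of $\mu^{i}_{0}$ to $Z_{0}$ already coincide with $\delta_{z_{0}}$, the tail coupling is canonically supplied by the extension constructed above, and the concentration estimate then provides the required uniform control on the tail contribution, so the lemma follows.
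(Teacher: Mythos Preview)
Your approach rests on two hypotheses that are not part of the lemma: that each $\X^{i}_{0}$ is a singleton, and that the flows are $H$-concentrated. The phrase ``fully defined over $(-\infty,0]$'' in Bamler's terminology only means $I'=(-\infty,0]$, i.e.\ the conjugate heat flow is defined at every time; it does not force $\X^{i}_{0}$ to be a point. Your reading conflates this with the \emph{extension} procedure around \eqref{var-goes-to-zero}, which is a separate construction applied to specific examples. Likewise, the concentration bound $\var_{t}(\mu^{i}_{t})\leq H|t|$ is not available in general. So as a proof of the lemma as stated, your argument has a gap.

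More importantly, you have missed the mechanism that makes the paper's proof short and assumption-free: the exceptional set $E$ in the definition of $d_{\F}$. The paper extends $\C$ to $\widetilde{\C}$ by choosing $(Z_{0},d^{Z}_{0})$ \emph{arbitrarily} and leaving everything else unchanged. Given $I^{\ast}=[a,0]$ and $\varepsilon>0$, it sets $\delta=3\varepsilon^{2}$, uses the hypothesis on $[a,-\delta]$ to get $E_{i}$ with $|E_{i}|\leq\varepsilon^{2}$ and couplings satisfying the integral bound $\leq\varepsilon$, and then simply enlarges the exceptional set to $\widetilde{E}_{i}=E_{i}\cup(-\delta,0]$. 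Since $I^{\ast}\setminus\widetilde{E}_{i}=[a,-\delta]\setminus E_{i}$, the same couplings work verbatim, and $|\widetilde{E}_{i}|\leq 4\varepsilon^{2}$ gives $d_{\F}^{\widetilde{\C},I^{\ast}}\leq 2\varepsilon$. No couplings are ever needed on the tail, no structure at $t=0$ is used, and the ``technical obstacle'' you identify simply does not arise.
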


\begin{proof}
Let $\C=\left((Z_{t},d^{Z}_{t})_{t<0}, (\varphi_{t}^{i})_{t\in I''^{,i},\; i\in \N\cup \{\infty\}}\right)$ be the given correspondence. 
Obviously, $\C$ can be extended to a correspondence $\widetilde{\C}$ over $(-\infty,0]$ defining $(Z_{0},d^{Z}_{0})$ 
arbitrarily and leaving all the other data unchanged.

Now we prove the desired convergence. Fix a compact time-interval $I^{*}\subseteq (-\infty,0]$. If $0\notin I^{*}$ the claim follows directly from the assumptions, and therefore we can suppose $I^{*}=[a,0]$. Let $\varepsilon \in (0,\sqrt{|a|}/2)$ and define $I^{*}_{\delta}=[a,-\delta]$ with $\delta=3\varepsilon^{2}$. Since we have convergence on $I^{*}_{\delta}$, there exists $i_{0}\in \N$ such that
\begin{equation*}
d_{\mathbb{F}}^{\C, I^{*}_{\delta}}\left((\X^{i}_{I^{*}_{\delta}},(\mu^{i}_{t})_{t\in I^{*}_{\delta}}), 
(\X^{\infty}_{I^{*}_{\delta}}, (\mu^{\infty}_{t})_{t\in I^{*}_{\delta}})\right)<\varepsilon
\end{equation*} 
for every $i\geq i_{0}$. Recalling the definition of $\F$-distance within a correspondence, for every $i\geq i_{0}$ there exist subsets $E_{i}\subseteq I^{*}_{\delta}$ with
\begin{equation*}
I^{*}_{\delta}\setminus E_{i}\subseteq I''^{,i}\cap I''^{,\infty}\cap I^{*}_{\delta},
\end{equation*}
and a family of couplings $(q^{i}_{t})_{t\in I^{*}_{\delta}\setminus E_{i}}$ between $\mu_{t}^{i}$ and $\mu_{t}^{\infty}$ such that
\begin{equation*}
|E_{i}|\leq \varepsilon^{2}
\end{equation*}
and
\begin{equation}\label{int-est}
\int_{\X^{i}_{t}\times \X^{\infty}_{t}}d_{W_{1}}^{Z_{s}^{i}}\left((\varphi_{t}^{i})_{\ast}\nu^{i}_{\x;s},(\varphi_{t}^{\infty})_{\ast}\nu_{\y;s}^{\infty}\right)dq^{i}_{t}(\x,\y) \leq \varepsilon
\end{equation}
for all $s,t\in I^{*}_{\delta}\setminus E_{i}$ with $s\leq t$. Now, defining $\widetilde{E}_{i}=E_{i}\cup (-\delta,0]$ we have $|\widetilde{E}_{i}|\leq \varepsilon^{2}+\delta = 4\varepsilon^2$ and $I^{*}\setminus \widetilde{E}_{i}=I^{*}_{\delta}\setminus E_{i}$; therefore, \eqref{int-est} still holds for every $s,t\in I^{*}\setminus \widetilde{E}_{i}$ with $s\leq t$. By the choice of $\delta$ we therefore deduce that 
\begin{equation*}
d_{\mathbb{F}}^{\widetilde{\C}, I^{*}}\left((\X^{i}_{I^{*}_{\phantom{\delta}}},(\mu^{i}_{t})_{t\in I^{*}}), 
(\X^{\infty}_{I^{*}}, (\mu^{\infty}_{t})_{t\in I^{*}})\right) \leq 2\varepsilon
\end{equation*} 
for every $i\geq i_{0}$, and we are done.
\end{proof}

For the next lemmas, consider a given general metric flow pair $(\X,(\mu_{t})_{t\leq 0})\in \F_{(-\infty,0]}^{\ast}$, fully defined over $(-\infty,0]$. Concretely, the reader may think of one of the extended metric flow pairs $(\X^{k},(\mu^{k}_{t})_{t\leq 0})$ (for $k\in \N$ fixed) constructed in Section \ref{shrinker-mf-section}. Intuitively, we want to approximate this metric flow pair with a sequence of metric flow pairs corresponding to Ricci flows existing for a slightly longer time, obtained via a time-shift, but we need to work with a different conjugate heat flow than simply the time-shifted one. We first prove a technical lemma.

\begin{lemma}[$W_1$-Wasserstein-Distance]\label{w1-distance-points} 
Let $(\X,(\mu_{t})_{t\leq 0})\in \F_{(-\infty,0]}^{\ast}$ be a metric flow pair fully defined over $(-\infty,0]$ and let $t_{i}\nearrow 0$ be a sequence of negative numbers and suppose there exist points $\z_{i}\in \X_{t_{i}}$ such that 
\begin{equation*}
\lim_{i\to \infty} d^{\X_{t_{i}}}_{W_{1}}(\mu_{t_{i}},\delta_{\z_{i}})=0.
\end{equation*}
Then for every compact time-interval $I^{*}\subseteq (-\infty,0)$ we have
\begin{equation*}
\lim_{i\to \infty}\sup_{t\in I^{*}}d^{\X_{t}}_{W_{1}}(\mu_{t},\nu_{\z_{i};t})=0.
\end{equation*}
\end{lemma}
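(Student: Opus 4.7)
The strategy is to combine the reproduction formula for the conjugate heat flow $(\mu_{t})$ with Kantorovich--Rubinstein duality and the well-known $1$-Lipschitz contraction of the heat-flow map along a metric flow. Fix a compact interval $I^{*}=[a,b]\subseteq (-\infty,0)$ and choose $i_{0}$ large enough that $t_{i}>b$ for every $i\geq i_{0}$. Then for any $t\in I^{*}$ and $i\geq i_{0}$ we have $t<t_{i}$, so the reproduction formula
\begin{equation*}
\mu_{t}(\,\cdot\,)=\int_{\X_{t_{i}}}\nu_{\y;t}(\,\cdot\,)\, d\mu_{t_{i}}(\y)
\end{equation*}
is valid.

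Fix any bounded $1$-Lipschitz function $f\colon \X_{t}\to\R$ and set $F(\y):=\int_{\X_{t}}f\, d\nu_{\y;t}$ for $\y\in\X_{t_{i}}$. A standard consequence of axiom (6) in Bamler's definition of a metric flow, obtained by a scaling argument (applying the axiom to $\Phi(f/\sqrt{\mathsf{T}})$ and letting $\mathsf{T}\to \infty$, so that the Gaussian normalisation disappears in the limit), is that the map $\y\mapsto \nu_{\y;t}$ is $1$-Lipschitz from $(\X_{t_{i}},d_{t_{i}})$ to $(\mathcal{P}(\X_{t}), d_{W_{1}}^{\X_{t}})$; equivalently, $F$ is $1$-Lipschitz on $(\X_{t_{i}},d_{t_{i}})$. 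Using the reproduction formula, one then computes
\begin{equation*}
\int_{\X_{t}}f\, d\mu_{t}-\int_{\X_{t}}f\, d\nu_{\z_{i};t}
=\int_{\X_{t_{i}}}\bigl(F(\y)-F(\z_{i})\bigr)\, d\mu_{t_{i}}(\y)
\leq \int_{\X_{t_{i}}}d_{t_{i}}(\y,\z_{i})\, d\mu_{t_{i}}(\y)
=d_{W_{1}}^{\X_{t_{i}}}(\mu_{t_{i}},\delta_{\z_{i}}),
\end{equation*}
where the last equality uses that $\mu_{t_{i}}\otimes\delta_{\z_{i}}$ is the only coupling of $\mu_{t_{i}}$ with $\delta_{\z_{i}}$.

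Taking the supremum over all bounded $1$-Lipschitz $f$ via Kantorovich--Rubinstein duality yields the uniform bound
\begin{equation*}
d_{W_{1}}^{\X_{t}}(\mu_{t},\nu_{\z_{i};t})\leq d_{W_{1}}^{\X_{t_{i}}}(\mu_{t_{i}},\delta_{\z_{i}})
\end{equation*}
for every $t\in I^{*}$ and $i\geq i_{0}$. Since the right-hand side is independent of $t$ and tends to $0$ by hypothesis, $\sup_{t\in I^{*}} d_{W_{1}}^{\X_{t}}(\mu_{t},\nu_{\z_{i};t})\to 0$, proving the claim. The only technically delicate ingredient is the $1$-Lipschitz property of $F$ --- namely, the Lipschitz contraction of the heat-flow map between later and earlier time slices of the metric flow --- which is standard in Bamler's theory (and, in our Ricci-flow setting, also follows directly from Hamilton's gradient estimate under Ricci flow). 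Once that is granted, the rest is the short duality computation above.
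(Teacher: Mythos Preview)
Your proof is correct and yields exactly the same key inequality as the paper, namely
\[
d_{W_{1}}^{\X_{t}}(\mu_{t},\nu_{\z_{i};t})\leq d_{W_{1}}^{\X_{t_{i}}}(\mu_{t_{i}},\delta_{\z_{i}})\qquad(t\in I^{*},\ i\ge i_0).
\]
The paper obtains this in one line by quoting that $t\mapsto d_{W_{1}}^{\X_{t}}(\mu_{t},\nu_{\z_{i};t})$ is non-decreasing for $t\le t_i$ (this is \cite[Proposition~3.16]{Ba20_compactness}, applied to the two conjugate heat flows $(\mu_t)$ and $(\nu_{\z_i;t})_{t\le t_i}$) and evaluating at $t=t_i$, where $\nu_{\z_i;t_i}=\delta_{\z_i}$. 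You instead unpack that monotonicity by hand via the reproduction formula and Kantorovich--Rubinstein duality; the $1$-Lipschitz property of $\y\mapsto\nu_{\y;t}$ you invoke is precisely what underlies Bamler's Proposition~3.16, so the two arguments are really the same, with yours being self-contained and the paper's shorter by citation.
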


\begin{proof}
Let $I^{*}=[a,b]$, $b<0$, and take $i$ large enough such that $b<t_{i}$. From \cite[Proposition~3.16]{Ba20_compactness} we know that the function 
\begin{equation*}
t\mapsto d^{\X_{t}}_{W_{1}}(\mu_{t},\nu_{\z_{i};t})
\end{equation*}
is non-decreasing for $t\in (-\infty,t_{i}]$. Since $I^{*}\subseteq (-\infty,t_{i})$, it follows
\begin{equation*}
\sup_{t\in I^{*}}d^{\X_{t}}_{W_{1}}(\mu_{t},\nu_{\z_{i};t})
\leq d^{\X_{t_{i}}}_{W_{1}}(\mu_{t_{i}},\nu_{\z_{i};t_{i}})=d^{\X_{t_{i}}}_{W_{1}}(\mu_{t_{i}},\nu_{(z_{i},t_{i});t_{i}})
=d^{\X_{t_{i}}}_{W_{1}}(\mu_{t_{i}},\delta_{\z_{i}})\longrightarrow 0
\end{equation*} 
as $i\to \infty$, and we are done.
\end{proof}

Now, given $t_{i}\nearrow 0$ and $\z_{i}\in \X_{t_{i}}$ as in Lemma \ref{w1-distance-points}, consider, for $i\in \N$ fixed, the metric flow pair 
$(\X_{t<t_{i}}, (\nu_{\z_{i};t})_{t<t_{i}})$ over $(-\infty,t_{i})$. Applying a time-shift $t\mapsto t+t_{i}$, we obtain a new metric flow pair $(\X^{i}_{<0}, (\eta^{i}_{t})_{t<0})$ over $(-\infty,0)$, where $\X^{i}_{t}=\X_{t+t_{i}}$ and $\eta^{i}_{t}=\nu_{\z_{i};t+t_{i}}$ for every $t<0$. The next lemma shows that these metric flow pairs approximate $(\X,(\mu_{t})_{t\leq 0})$ on compact time-intervals.

\begin{lemma}[Convergence of Approximating Metric Flow Pairs]\label{conv-compact-interval-rescaling} 
Suppose that $\X$ is $H$-concentrated. If $I^{*}\subseteq (-\infty,0)$ is a compact time-interval then, in the metric space $(\mathbb{F}_{I^{*}},d_{\mathbb{F}})$, we have
\begin{equation*}
\lim_{i\to \infty}d_{\mathbb{F}}\left((\X_{I^{\ast}},(\mu_{t})_{t\in I^{\ast}}),(\X^{i}_{I^{\ast}},(\eta^{i}_{t})_{t\in I^{\ast}})\right)=0.
\end{equation*}
\end{lemma}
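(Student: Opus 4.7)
The plan is to construct, for each sufficiently large $i$, an explicit correspondence $\C^i$ on $I^*$ between $\X_{I^*}$ and $\X^i_{I^*}$ whose associated $\F$-distance tends to $0$.

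First I would apply Lemma \ref{w1-distance-points} to the sequence $(\z_i)$ to obtain
\[
\epsilon_i := \sup_{t\in I^*} d^{\X_t}_{W_1}(\mu_t,\nu_{\z_i;t}) \longrightarrow 0 \quad \text{as } i\to\infty.
\]
Intuitively, this says that the conjugate heat flow on $\X$ emanating backwards from $\delta_{\z_i}$ at time $t_i$ tracks $(\mu_t)$ uniformly well on $I^*$. This is the key quantitative input.

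Next, I would build $\C^i$ itself. The subtlety is that for each $t\in I^*$ the slices $\X_t$ and $\X^i_t = \X_{t+t_i}$ are a priori distinct metric spaces. Since $t_i$ is close to $0$ for large $i$ and $\X$ is $H$-concentrated (so $\var_t(\nu_{\y;t})\leq H|t_i|$ for every $\y\in\X_{t+t_i}$), I would take $Z^i_t$ to be the metric gluing of $\X_t$ and $\X_{t+t_i}$ in which the cross-distance between $\x\in\X_t$ and $\y\in\X_{t+t_i}$ is controlled by $d^{\X_t}_{W_1}(\delta_\x,\nu_{\y;t})$ up to a small offset $c_i\to 0$; the maps $\varphi_t$ and $\varphi^i_t$ are then the obvious isometric inclusions. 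The coupling $q^i_t$ between $\mu_t$ on $\X_t$ and $\eta^i_t = \nu_{\z_i;t+t_i}$ on $\X^i_t$ can be assembled in two pieces: the optimal coupling realising $d^{\X_t}_{W_1}(\mu_t,\nu_{\z_i;t})\leq \epsilon_i$ from the previous step, and the coupling between $\nu_{\z_i;t}$ and $\eta^i_t$ induced by the reproduction formula
\[
\nu_{\z_i;t} = \int_{\X^i_t}\nu_{\y;t}\,d\eta^i_t(\y),
\]
whose cost in $Z^i_t$ is of order $\sqrt{H|t_i|}$ by $H$-concentration.

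Finally, to obtain the required $\F$-distance bound, I would verify the smallness of the integrands $\int d^{Z^i_s}_{W_1}\bigl((\varphi_s)_*\nu_{\x;s},(\varphi^i_s)_*\nu^i_{\y;s}\bigr)\, dq^i_t(\x,\y)$ for $s\leq t$ in $I^*$; this follows from the monotonicity of the $W_1$-Wasserstein distance along the conjugate heat kernel (backwards in time) together with the $H$-concentration estimate applied across the small time-gap $t_i$.

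\textbf{Main obstacle.} The principal difficulty is constructing the gluing $Z^i_t$ that simultaneously embeds the distinct time slices $\X_t$ and $\X_{t+t_i}$ isometrically and respects the conjugate heat kernel structure well enough to make all $\F$-integrands uniformly small on $I^*$. The $H$-concentration hypothesis enters here in an essential way, as it quantifies by $\sqrt{H|t_i|}$ the separation between a point $\y\in\X_{t+t_i}$ and its image measure $\nu_{\y;t}\in\mathcal{P}(\X_t)$, which is precisely what closes the estimate uniformly over $t\in I^*$.
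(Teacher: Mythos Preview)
Your strategy is genuinely different from the paper's, and as written it has a gap in the gluing step.

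The paper does not attempt to glue the slices $\X_t$ and $\X_{t+t_i}$ directly. Instead it inserts an intermediate metric flow pair $(\X^i_{<0},(\mu^i_t)_{t<0})$, obtained by time-shifting $(\X,(\mu_t))$ by $t\mapsto t+t_i$, and uses the triangle inequality:
\[
d_{\F}\bigl((\X,\mu),(\X^i,\eta^i)\bigr)\ \le\ d_{\F}\bigl((\X,\mu),(\X^i,\mu^i)\bigr)\ +\ d_{\F}\bigl((\X^i,\mu^i),(\X^i,\eta^i)\bigr).
\]
The first term goes to zero because time-shifting is continuous for the $\F$-distance (cited from \cite{CMZ}). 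For the second term the underlying metric flow is the \emph{same}, so the trivial correspondence $Z_t=\X^i_t$, $\varphi_t=\psi_t=\id$ works, and Bamler's Lemma~5.14 bounds it by $\sup_{\tau\in I^{\ast\ast}} d^{\X_\tau}_{W_1}(\mu_\tau,\nu_{\z_i;\tau})$, which tends to zero by Lemma~\ref{w1-distance-points}. No nontrivial gluing is needed.

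Your direct construction, by contrast, runs into the following obstacle. For the cross-distance $d_{Z^i_t}(\x,\y):=d^{\X_t}_{W_1}(\delta_\x,\nu_{\y;t})+c_i$ to extend the two slice metrics, the triangle inequality with both endpoints in $\X_{t+t_i}$ forces
\[
d_{t+t_i}(\y_1,\y_2)\ \le\ d^{\X_t}_{W_1}(\nu_{\y_1;t},\nu_{\y_2;t}) + 2c_i.
\]
But $H$-concentration (and monotonicity of $W_1$ along the heat kernel) only gives the \emph{reverse} inequality $d^{\X_t}_{W_1}(\nu_{\y_1;t},\nu_{\y_2;t})\le d_{t+t_i}(\y_1,\y_2)$; it provides no uniform lower bound on how much the heat kernel contracts distances over the gap $|t_i|$. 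So there is no reason the required $c_i$ can be taken to tend to $0$, and your ``main obstacle'' paragraph does not address this direction. In effect you are re-deriving time-shift continuity of the $\F$-distance inside the argument, which is precisely the nontrivial input the paper outsources to \cite{CMZ}. If you want to salvage the direct route you would need either a different gluing (not the one you propose) or an independent proof that for this particular $\X$ the contraction over $[t,t+t_i]$ is uniformly $o(1)$---neither of which is supplied.
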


\begin{proof}
It is useful to introduce the metric flow pair $(\X^{i}_{<0},(\mu^{i}_{t})_{t<0})$ over $(-\infty,0)$, obtained time-shifting $(\X, (\mu_{t})_{t<0})$ by $t\mapsto t+t_{i}$ and restricting to negative times; explicitly, $\X^{i}_{t}=\X_{t+t_{i}}$ and $\mu^{i}_{t}=\mu_{t+t_{i}}$ for every $t<0$. In particular, in this sequence of metric flow pairs, the metric flow $\X^{i}_{<0}$ is the same as in our approximating sequence, but the conjugate heat flow is different.

Given $I^{\ast}$, we work in the metric space $(\mathbb{F}_{I^{*}},d_{\mathbb{F}})$; note that since the above metric flow pairs are fully defined over $(-\infty,0)$ their restrictions to $I^{*}$ define elements in this space. By the triangle inequality, we have
\begin{align*}
d_{\mathbb{F}}\left((\X_{I^{\ast}},(\mu_{t})_{t\in I^{*}}),(\X^{i}_{I^{\ast}},(\eta^{i}_{t})_{t\in I^{*}})\right)
&\leq d_{\mathbb{F}}\left((\X_{I^{\ast}},(\mu_{t})_{t\in I^{*}}),(\X^{i}_{I^{\ast}},(\mu^{i}_{t})_{t\in I^{*}})\right) \\
&\quad+ d_{\mathbb{F}}\left((\X^{i}_{I^{\ast}},(\mu^{i}_{t})_{t\in I^{*}}),(\X^{i}_{I^{\ast}},(\eta^{i}_{t})_{t\in I^{*}})\right).
\end{align*}
We estimate the two terms on the right hand side separately. Regarding the first one, we get 
\begin{equation*}
\lim_{i\to \infty}d_{\mathbb{F}}\left((\X_{I^{\ast}},(\mu_{t})_{t\in I^{*}}),(\X_{I^{\ast}}^{i},(\mu^{i}_{t})_{t\in I^{*}})\right)=0
\end{equation*}
since time-shifting is continuous with respect to $\F$-distance, see \cite[Proposition~6.1]{CMZ} for a detailed proof of this fact.
The second term can be estimated constructing an explicit correspondence as follows. 
Let 
\begin{equation*}
\C_{i}=\left((Z_{t},d_{t}^{Z})_{t\in I^{*}}, (\varphi_{t})_{t\in I^{*}}, (\psi_{t})_{t\in I^{*}}\right)
\end{equation*}
be the correspondence over $I^{\ast}$ between $\X^{i}_{I^{\ast}}$ and itself obtained setting 
$Z_{t}=\X^{i}_{t}=\X_{t+t_{i}}$ and $\varphi_{t}=\psi_{t}$ equal to the identity map. 
Without loss of generality we can assume $|t_{i}|<1$ for every $i$. If $I^{*}=[a,b]$, denoting $I^{\ast\ast}=[a-1,b-1]$
and untangling the above definitions of the heat flows, we have
\begin{equation*}
d^{\X^{i}_{t}}_{W_{1}}(\mu_{t}^{i},\eta^{i}_{t})=d^{\X_{t+t_{i}}}_{W_{1}}(\mu_{t+t_{i}}, \nu_{\z_{i};t+t_{i}})
\leq \sup_{\tau \in I^{\ast\ast} } d^{\X_{\tau}}_{W_{1}}(\mu_{\tau}, \nu_{\z_{i};\tau})
\end{equation*}
for every $t\in I^{*}$, noting that $\tau<t_{i}$ if $\tau \in I^{\ast\ast}$.
Applying \cite[Lemma~5.14]{Ba20_compactness}, it follows that
\begin{equation*}
d_{\mathbb{F}}^{\C_{i}}\left((\X^{i}_{I^{\ast}}, (\mu^{i}_{t})_{t\in I^{*}}), (\X^{i}_{I^{\ast}},(\eta^{i}_{t})_{t\in I^{*}})\right) \leq  
\sup_{\tau \in I^{\ast\ast} } d^{\X_{\tau}}_{W_{1}}(\mu_{\tau}, \nu_{\z_{i};\tau})\end{equation*}
and, by definition of $\mathbb{F}$-distance,
\begin{equation*}
d_{\mathbb{F}}\left((\X^{i}_{I^{\ast}}, (\mu^{i}_{t})_{t\in I^{*}}), (\X^{i}_{I^{\ast}},(\eta^{i}_{t})_{t\in I^{*}})\right) \leq 
 \sup_{\tau \in I^{\ast\ast} } d^{\X_{\tau}}_{W_{1}}(\mu_{\tau}, \nu_{\z_{i};\tau}).
 \end{equation*}
At this point, Lemma \ref{w1-distance-points} implies the claim. 
\end{proof}

Observe that $\lim_{t\nearrow 0}\var_{t}(\eta^{i}_{t})=0$, and therefore the metric flow pair $(\X^{i}_{<0}, (\eta^{i}_{t})_{t<0})$
can be extended to a metric flow pair fully defined over $(-\infty,0]$, where $\X^{i}_{0}=\{\z_{i}\}$ and $\eta^{i}_{0}=\delta_{\z_{i}}$;
we denote it by $(\X^{i},(\eta^{i}_{t})_{t\leq 0})$. Lemma \ref{extension-conv} will be applied to this setting. 

\subsection{Proof of Theorem \ref{good-convergence-intro} and Structure of Tangent Flows}

Let $(\X^{k}, (\mu^{k}_{t})_{t\leq 0})$, $k\in \N$, be a sequence of (extended) metric flow pairs fully defined over $(-\infty,0]$, as in Section \ref{conv-result-section}. After passing to a subsequence, $(\X^{k}, (\mu^{k}_{t})_{t\leq 0})$ satisfies Theorem \ref{scaling-compactness-thm-intro} for some limit metric flow pair $(\X^{\infty}, (\mu_{t}^{\infty})_{t\leq 0})\in \F^{\ast}_{(-\infty,0]}(H_{n})$ and some correspondence $\C$. 

Using the above results, we will now construct an approximating sequence converging to the same limit. In order to do so, for $k\in \N$ fixed and given a sequence of negative times $t_{i}\nearrow 0$, let $\z_{k,i}\in \X_{t_{i}}^{k}$ be the $H_n$-centers given by Proposition \ref{centers-rescaling}. Following the notation introduced in Section \ref{some-lemmas-section}, consider the double sequence of metric flow pairs $(\X^{k,i}, (\eta^{k,i}_{t})_{t\leq 0})$, for $i,k\in \N$. 

By construction, each of the above elements is obtained applying a time-shift $t\mapsto t+t_{i}$ to the metric flow pair $(\X^{k}, (\nu^{k}_{\z_{k,i};t})_{t<t_{i}})$ (and extending to $t=0$), and therefore is (the extension of) the metric flow pair corresponding to the pointed Ricci flow $(M_{k}, g_{k}(t+t_{i}), (z_{k,i}, 0))_{t<0}$, where $\z_{k,i}=(z_{k,i},t_{i})\in \X_{t_{i}}^{k}$. Note that the singular time of this Ricci flow is $T_{k}-t_{i}>0$, and therefore the construction fits into the setting of 
\cite[Section~6]{heatkernel2}. 

Recall that Lemma \ref{conv-compact-interval-rescaling} gives, for any $k\in \N$ fixed,
\begin{equation}\label{eq.lemma5.3summary}
\lim_{i\to \infty}d_{\mathbb{F}}\left((\X^{k}_{I^{\ast}},(\mu^{k}_{t})_{t\in I^{*}}), (\X^{k,i}_{I^{\ast}}, (\eta^{k,i}_{t})_{t\in I^{*}}) \right)=0
\end{equation}
in the metric space $(\mathbb{F}_{I^{*}},d_{\mathbb{F}})$, for every compact time-interval $I^{*}\subseteq (-\infty,0)$.

\begin{lemma}[Construction of Approximating Sequence with Same Limit]\label{double-conv-lemma} 
There exists an increasing sequence of indices $(i_{k})_{k\in \N}$ such that, for every compact time-interval $I^{*}\subseteq (-\infty,0)$, we have
\begin{equation*}
\lim_{i\to \infty} d_{\mathbb{F}}\left((\X^{k,i_{k}}_{I^{\ast}}, (\eta^{k,i_{k}}_{t})_{t\in I^{*}}), 
(\X^{\infty}_{I^{\ast}},(\mu^{\infty}_{t})_{t\in I^{*}}) \right)=0
\end{equation*}
in the metric space $(\mathbb{F}_{I^{*}},d_{\mathbb{F}})$.
\end{lemma}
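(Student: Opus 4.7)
The plan is a standard diagonal extraction. First, exhaust $(-\infty,0)$ by an increasing sequence of compact time-intervals $I^{*}_{j}=[-j,-1/j]$ for $j\in \N$, so that every compact $I^{*}\subseteq (-\infty,0)$ eventually sits inside some $I^{*}_{j}$. Two ingredients are available. From Theorem \ref{scaling-compactness-thm-intro}, for each fixed $j$ we have
\[
d_{\F}^{I^{*}_{j}}\bigl((\X^{k},(\mu^{k}_{t})),(\X^{\infty},(\mu^{\infty}_{t}))\bigr)\longrightarrow 0\quad \text{as }k\to\infty.
\]
From Lemma \ref{conv-compact-interval-rescaling}, applicable since each $\X^{k}$ is $H_{n}$-concentrated, for each fixed $k$ and $j$,
\[
d_{\F}^{I^{*}_{j}}\bigl((\X^{k,i},(\eta^{k,i}_{t})),(\X^{k},(\mu^{k}_{t}))\bigr)\longrightarrow 0\quad \text{as }i\to\infty.
\]

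The main step is to extract a diagonal. Inductively, I would choose $i_{k}>i_{k-1}$ such that
\[
d_{\F}^{I^{*}_{k}}\bigl((\X^{k,i_{k}},(\eta^{k,i_{k}}_{t})),(\X^{k},(\mu^{k}_{t}))\bigr)<\tfrac{1}{k};
\]
this is possible by the second ingredient, and imposing strict monotonicity of $(i_{k})$ is harmless. To verify the claim, fix any compact $I^{*}\subseteq (-\infty,0)$ and pick $k_{0}$ with $I^{*}\subseteq I^{*}_{k_{0}}$. For all $k\geq k_{0}$, the triangle inequality for $d_{\F}$ applied on $I^{*}$ gives
\[
d_{\F}^{I^{*}}\bigl((\X^{k,i_{k}},\eta^{k,i_{k}}),(\X^{\infty},\mu^{\infty})\bigr)\leq d_{\F}^{I^{*}}\bigl((\X^{k,i_{k}},\eta^{k,i_{k}}),(\X^{k},\mu^{k})\bigr)+d_{\F}^{I^{*}}\bigl((\X^{k},\mu^{k}),(\X^{\infty},\mu^{\infty})\bigr).
\]
By the monotonicity of $d_{\F}$ under restriction to a subinterval (any correspondence and exceptional set on $I^{*}_{k}$ restrict to admissible data on $I^{*}$), the first term is bounded by $d_{\F}^{I^{*}_{k}}(\cdots)<1/k$ by construction, while the second tends to $0$ as $k\to\infty$ by the first ingredient.

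The only non-obvious ingredient is the restriction monotonicity $d_{\F}^{I^{*}}\leq d_{\F}^{I^{**}}$ for $I^{*}\subseteq I^{**}$, which follows directly from the definition of the $\F$-distance in \cite{Ba20_compactness}; this is what allows the diagonal index chosen with respect to the growing intervals $I^{*}_{k}$ to control the distance on every fixed compact $I^{*}$. Apart from this structural remark, the proof is a textbook diagonal extraction and I anticipate no substantial obstacle.
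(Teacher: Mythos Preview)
Your proposal is correct and follows essentially the same argument as the paper: the same exhaustion $I^{*}_{k}=[-k,-1/k]$, the same diagonal choice $i_{k}>i_{k-1}$ with $d_{\F}^{I^{*}_{k}}\bigl((\X^{k,i_{k}},\eta^{k,i_{k}}),(\X^{k},\mu^{k})\bigr)<1/k$ via Lemma~\ref{conv-compact-interval-rescaling}, and the same conclusion via the triangle inequality, monotonicity of $d_{\F}$ under restriction, and the $\F$-convergence from Theorem~\ref{scaling-compactness-thm-intro}.
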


\begin{proof}
First of all, let $I_{k}=[-k,-1/k]$ for $k\in \N$; in this way we obtain an exhaustion of $(-\infty,0)$ by compact subsets. By an inductive argument, from \eqref{eq.lemma5.3summary} it is easy to construct a sequence of indices $(i_{k})_{k\in \N}$ such that $i_{k}<i_{k+1}$ and
\begin{equation*}
d_{\mathbb{F}}\left ((\X^{k,i_{k}}_{I_{k}},(\eta^{k,i_{k}}_{t})_{t\in I_{k}}), (\X^{k}_{I_{k}},(\mu^{k}_{t})_{t\in I_{k}})\right)\leq 1/k
\end{equation*}
for every $k\in \N$. 

Now, we prove the claimed convergence. Fix $I^{*}\subseteq (-\infty,0)$ a compact interval and let $\varepsilon>0$. Then there exists $k_{0}\in \N$ such that $I^{*}\subseteq I_{k}$ and $1/k\leq \varepsilon/2$ for every $k\geq k_{0}$. It follows that
\begin{equation*}
d_{\mathbb{F}}\left ((\X^{k,i_{k}}_{I^{\ast}},(\eta^{k,i_{k}}_{t})_{t\in I^{\ast}}), (\X^{k}_{I^{\ast}},(\mu^{k}_{t})_{t\in I^{*}})\right)
\leq \varepsilon/2,
\end{equation*} 
as the $\mathbb{F}$-distance is monotone under restriction. Now, since up to a subsequence we have
\begin{equation*}
(\X^{k},(\mu^{k}_{t})_{t\leq 0})\longrightarrow (\X^{\infty},(\mu^{k}_{t})_{t \leq 0})
\end{equation*}
within $\C$ on compact time-intervals of $(-\infty,0]$, there exists $k_{1}\geq k_{0}$ such that
\begin{equation*}
d_{\mathbb{F}}\left((\X^{k}_{I^{\ast}},(\mu^{k}_{t})_{t\in I^{*}}), (\X^{\infty}_{I^{\ast}}, (\mu^{\infty}_{t})_{t\in I^{*}}) \right)\leq d_{\mathbb{F}}^{\C, I^{*}}\left((\X^{k}_{I^{\ast}},(\mu^{k}_{t})_{t\in I^{*}}), (\X^{\infty}_{I^{\ast}}, (\mu^{\infty}_{t})_{t\in I^{*}}) \right)\leq \varepsilon/2
\end{equation*}
for every $k\geq k_{1}$. Therefore,
\begin{equation*}
d_{\mathbb{F}}\left((\X^{k,i_{k}}_{I^{\ast}}, (\eta^{k,i_{k}}_{t})_{t\in I^{*}}), (\X^{\infty}_{I^{\ast}}, (\mu^{\infty}_{t})_{t\in I^{*}}) \right) \leq \varepsilon
\end{equation*}
for every $k\geq k_{1}$ by the triangle inequality for $d_{\F}$, and we are done. 
\end{proof}

\begin{rem}\label{double-conv-rem} 
Given any sequence of numbers $(m_{k})_{k\in \N}$, that we think of as associated with $(\X^{k}, (\mu^{k}_{t})_{t \leq 0})$, it is easy to see that the sequence of indices $(i_{k})_{k\in \N}$ constructed in Lemma \ref{double-conv-lemma} can be chosen to further satisfy $|t_{i_{k}}m_{k}|\leq 1$ for every $k\in \N$. This will prove useful in some later applications.
\end{rem}

Next, we show that the approximating sequence $(\X^{k,i_{k}}, (\eta^{k,i_{k}}_{t})_{t\leq 0})$ still converges to the same limit $(\X^{\infty}, (\mu_{t}^{\infty})_{t\leq 0})$ in the following stronger sense.

\begin{prop}[Convergence of Approximating Sequence]\label{prop-convergence} 
After passing to a subsequence, there exists a correspondence $\overline{\C}$ over $(-\infty,0]$ between the metric flows $\X^{k,i_{k}}$, $k\in \N$ and $\X^{\infty}$
such that
\begin{equation*}
(\X^{k,i_{k}}, (\eta_{t}^{k,i_{k}})_{t\leq 0}) \longrightarrow (\X^{\infty}, (\mu_{t}^{\infty})_{t\leq 0})
\end{equation*}
within $\overline{\C}$ on compact time-intervals of $(-\infty, 0]$. Moreover, the convergence is uniform over every compact subset $J\subseteq (-\infty,0]$ that contains only times at which
$\X^{\infty}$ is continuous. 
\begin{proof}
Thanks to Lemma \ref{double-conv-lemma}, we already know that for every compact time-interval $I^{*}\subseteq (-\infty,0)$ the sequence $(\X^{k,i_{k}}_{I^{\ast}},(\eta^{k,i_{k}}_{t})_{t \in I^{\ast}})$ converges towards $(\X^{\infty}_{I^{\ast}},(\mu^{\infty}_{t})_{t\in I^{\ast}})$ in $(\F_{I^{*}},d_{\F})$. Thus, applying first \cite[Theorem~6.6]{Ba20_compactness} and then Lemma \ref{extension-conv}, we obtain a correspondence $\C^{\ast}$ over $(-\infty,0]$ between the metric flows $\X^{k,i_{k}}, k\in \N$ and $\X^{\infty}$ such that 
\begin{equation}\label{5.6firstconvergence}
(\X^{k,i_{k}},(\eta_{t}^{k,i_{k}})_{t\leq 0}) \longrightarrow (\X^{\infty},(\mu^{\infty}_{t})_{t \leq 0})
\end{equation}
within $\C^{\ast}$ on compact time-intervals of $(-\infty,0]$.  

Now, since $(\X^{k,i_{k}},(\eta^{k,i_{k}}_{t})_{t\leq 0})\in \F^{\ast}_{(-\infty,0]}(H_{n})$ for every $k\in \N$, we can apply Theorem \ref{scaling-compactness-thm-intro} to this sequence. Therefore, after passing to a subsequence, there exists an $H_{n}$-concentrated and future continuous metric flow pair $(\Y^{\infty}, (\eta^{\infty}_{t})_{t\leq 0})$, representing a class in $\F^{\ast}_{(-\infty,0]}(H_{n})$, and a correspondence $\C^{\ast\ast}$ over $(-\infty,0]$ between the metric flows $\X^{k,i_{k}}$ and $\Y^{\infty}$ such that
\begin{equation}\label{conv-k-ik}
(\X^{k,i_{k}},(\eta^{k,i_{k}}_{t})_{t\leq 0}) \longrightarrow (\Y^{\infty},(\eta^{\infty}_{t})_{t \leq 0})
\end{equation}
within $\C^{\ast\ast}$ on compact time-intervals of $(-\infty,0]$. Since \eqref{5.6firstconvergence} still holds (within the correspondence $\C^{\ast}$) after taking the above subsequence, the uniqueness part of Theorem \ref{scaling-compactness-thm-intro} gives an isometry over $(-\infty,0]$ between the metric flow pairs $(\X^{\infty}, (\mu^{\infty}_{t})_{t\leq 0})$ and $(\Y^{\infty},(\eta^{\infty}_{t})_{t\leq 0})$, and therefore we can exchange them in \eqref{conv-k-ik}, obtaining the claim for an appropriate correspondence $\overline{\C}$.
\end{proof}
\end{prop}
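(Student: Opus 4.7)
The plan is to combine the $d_{\F}$-convergence already established in Lemma \ref{double-conv-lemma} with a second application of the general compactness theorem (Theorem \ref{scaling-compactness-thm-intro}) to the approximating sequence itself, and then use the uniqueness statement built into that theorem to identify the two resulting limits. The payoff is that the correspondence coming from the compactness theorem automatically brings with it the required uniform convergence on compact sets of continuity times, a property that a bare $d_{\F}$-limit does not see.

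Concretely, I would first note that each pair $(\X^{k,i_{k}},(\eta^{k,i_{k}}_{t})_{t\leq 0})$ belongs to $\F_{(-\infty,0]}^{\ast}(H_{n})$: it is the extended metric flow pair associated, via the construction of Section \ref{shrinker-mf-section}, with a pointed Ricci flow induced by a Ricci shrinker, so in particular it is $H_{n}$-concentrated and its final time slice is a single point. Applying Theorem \ref{scaling-compactness-thm-intro} to this sequence therefore yields, after passing to a subsequence, an $H_{n}$-concentrated future continuous limit $(\Y^{\infty},(\eta^{\infty}_{t})_{t\leq 0})$ and a correspondence $\overline{\C}$ over $(-\infty,0]$ within which $(\X^{k,i_{k}},(\eta^{k,i_{k}}_{t})_{t\leq 0})\to(\Y^{\infty},(\eta^{\infty}_{t})_{t\leq 0})$ on compact time-intervals, with the convergence uniform over any compact $J\subseteq(-\infty,0]$ consisting only of continuity times of $\Y^{\infty}$.

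The second step is to identify $\Y^{\infty}$ with $\X^{\infty}$. Convergence within $\overline{\C}$ implies $d_{\F}$-convergence on every compact $I^{\ast}\subseteq(-\infty,0)$, while Lemma \ref{double-conv-lemma} already furnishes $d_{\F}$-convergence of the very same (sub)sequence to $(\X^{\infty},(\mu^{\infty}_{t}))$ on each such $I^{\ast}$. Since $d_{\F}$-limits in $\F_{I^{\ast}}^{\ast}$ are unique up to isometry, the restrictions of $\Y^{\infty}$ and $\X^{\infty}$ to any $I^{\ast}$ must be isometric; exhausting $(-\infty,0)$ by an increasing family of compact intervals and patching these local isometries produces a global isometry $\Y^{\infty}_{<0}\simeq\X^{\infty}_{<0}$ of metric flow pairs over $(-\infty,0)$. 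This isometry extends uniquely to the final time $t=0$ because both extended flows have singleton final slice on which the conjugate heat flow reduces to the Dirac mass there, so the extension is forced. Transporting everything through this isometry, the correspondence $\overline{\C}$ delivers exactly the claimed convergence of $(\X^{k,i_{k}},(\eta^{k,i_{k}}_{t})_{t\leq 0})$ to $(\X^{\infty},(\mu^{\infty}_{t})_{t\leq 0})$, and the uniformity on compact sets of continuity times is inherited directly from Theorem \ref{scaling-compactness-thm-intro}.

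The main obstacle I expect is this uniqueness/identification step: Theorem \ref{scaling-compactness-thm-intro} guarantees uniqueness only up to isometry, and strictly speaking its output $\Y^{\infty}$ is characterised only in the ambient correspondence $\overline{\C}$, whereas $\X^{\infty}$ was produced by an entirely different correspondence $\C$ applied to a different sequence. One has to verify that the $d_{\F}$ equality of limits really can be promoted to an isometry compatible with both correspondences, and to do so cleanly on the closed interval $(-\infty,0]$ rather than only on $(-\infty,0)$ (which is where Lemma \ref{double-conv-lemma} operates). The singleton-final-slice structure of the extensions, together with the fact that $\var_{t}(\mu^{\infty}_{t})\to 0$ as $t\nearrow 0$, is what makes this extension step harmless; everything else is a formal manipulation of correspondences.
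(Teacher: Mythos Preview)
Your overall strategy is exactly the paper's: apply Theorem \ref{scaling-compactness-thm-intro} to the approximating sequence to get a limit $\Y^{\infty}$ within a correspondence $\overline{\C}$ (with the desired uniformity built in), then identify $\Y^{\infty}$ with $\X^{\infty}$ via uniqueness. The only real difference is in how the identification is carried out.

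You propose to argue uniqueness interval-by-interval and then patch local isometries on an exhaustion of $(-\infty,0)$; you correctly flag this as the delicate point, and indeed the patching is not automatic (it requires some compatibility of the local isometries, which is not guaranteed without further input). The paper circumvents this entirely: from the $d_{\F}$-convergence of Lemma \ref{double-conv-lemma} it first invokes \cite[Theorem~6.6]{Ba20_compactness} to produce a single correspondence $\C^{\ast}$ over $(-\infty,0)$ within which the approximating sequence converges to $\X^{\infty}$, and then uses Lemma \ref{extension-conv} to extend $\C^{\ast}$ to $(-\infty,0]$. Now both $\X^{\infty}$ and $\Y^{\infty}$ are $\F$-limits of the same (sub)sequence within correspondences over the full interval $(-\infty,0]$, so the uniqueness clause of Theorem \ref{scaling-compactness-thm-intro} applies directly and gives the isometry in one stroke---no patching or separate extension to $t=0$ is needed. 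This is cleaner and sidesteps precisely the obstacle you anticipated.
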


\begin{rem} 
Note that one can always extend $\overline{\C}$ to be fully defined over $J=\{0\}$. In this way, since the final time-slices are singletons, the claim of Proposition \ref{prop-convergence} still holds, and moreover the convergence is time-wise at $t=0$.  
\end{rem}

At this point, Theorem \ref{good-convergence-intro} follows trivially.

\begin{proof}[Proof of Theorem \ref{good-convergence-intro}]
Thanks to Proposition \ref{prop-convergence} we are exactly in the setting of \cite[Section~6]{heatkernel2}, and we can therefore apply \cite[Theorem~6.12]{heatkernel2}, which is itself an adaptation of Bamler's results from \cite[Theorems~2.3--2.6]{Ba20_structure} to the case of Ricci flows induced by Ricci shrinkers.
\end{proof}

In the same way, from \cite[Theorem~6.12]{heatkernel2} follows also the next proposition regarding the convergence of the approximating sequence on the regular part $\reg^{\infty}$ of the limit. 

\begin{prop}[Smooth Convergence on Regular Part]\label{good-convergence-prop} 
Under the same assumptions as in Theorem \ref{good-convergence-intro}, the convergence of the approximating sequence $(\X^{k,i_{k}},(\eta^{k,i_{k}}_{t})_{t\leq 0})$ towards $(\X^{\infty},(\mu^{\infty}_{t})_{t\leq 0})$ obtained in Proposition \ref{prop-convergence} is smooth on $\reg^{\infty}$, meaning the following. There exist an increasing sequence $(U_{k})_{k\in \N}$ of open subsets of $\reg^{\infty}$ with $\bigcup_{k\in \N}U_{k}=\reg^{\infty}$, open subsets $V_{k}\subseteq M_{k}\times (-\infty,0)$, time preserving diffeomorphisms $\phi_{k}\colon U_{k}\to V_{k}$ and a sequence $\varepsilon_{k}\to 0$ such that the following conditions hold.
\begin{enumerate}
\item[(a)] We have
\begin{align*}
\| \phi_{k}^{\ast}\overline{g}^{k}-g^{\infty}\|_{C^{[\varepsilon_{k}^{-1}]}(U_{k})} &\leq \varepsilon_{k},\\
\|\phi_{k}^{\ast}\partial^{k}_{t}-\partial^{\infty}_{\mathfrak{t}}\|_{C^{[\varepsilon_{k}^{-1}]}(U_{k})}& \leq \varepsilon_{k},\\
\|\phi_{k}^{\ast}\overline{w}^{k}-w^{\infty}\|_{C^{[\varepsilon_{k}^{-1}]}(U_{k})}& \leq \varepsilon_{k},
\end{align*}
where $(\reg^{k},\mathfrak{t}^{k},\partial^{k}_{\mathfrak{t}},g^{k})$ is the Ricci flow spacetime associated with 
the Ricci flow $\left(M_{k},\overline{g}_{k}(t)\right)_{t < 0}$ given by $\overline{g}_{k}(t)=g_{k}(t+t_{i_{k}})$, while $\overline{w}^{k}\colon \reg^{k} \to \R$ and $w^{\infty}\colon \reg^{\infty}\to \R$ are the probability densities defined by 
\begin{equation*}
d\eta^{k,i_{k}}_{t}=\overline{w}^{k} dV_{\overline{g}_{k}(t)}, \quad \quad d\mu^{\infty}_{t}=w^{\infty}dV_{g^{\infty}_{t}},
\end{equation*}
respectively.
\item[(b)] Let $\y_{\infty}\in \reg^{\infty}$ and $\y_{k}\in M_{k}\times (-\infty,0)$. Then $\y_{k}\to \y_{\infty}$ within $\overline{\C}$ if and only if $\y_{k}\in V_{k}$ for $k$ large and $\phi_{k}^{-1}(\y_{k})\to \y_{\infty}$ in $\reg^{\infty}$.
\item[(c)] If the convergence is uniform at some time $t\in (-\infty,0)$, then for any compact subset $K\subseteq \reg^{\infty}_{t}$ we have
\begin{equation*}
\sup_{\x\in K\cap U_{k}} d^{Z}_{t}(\varphi_{t}^{k}(\phi_{k}(\x)),\varphi^{\infty}_{t}(\x))\longrightarrow 0,
\end{equation*}
where $\overline{\C}=\left((Z_{t},d_{t}^{Z})_{t\leq 0}, (\varphi^{k}_{t})_{t\in I''^{,k}, k\in \N\cup\{\infty\}}\right)$.
\end{enumerate}
\end{prop}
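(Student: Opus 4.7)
The plan is to reduce the statement to \cite[Theorem~6.12]{heatkernel2} applied to the approximating sequence, exactly as in the proof of Theorem \ref{good-convergence-intro}. The crucial point is that by construction each approximating metric flow pair $(\X^{k,i_{k}},(\eta^{k,i_{k}}_{t})_{t\leq 0})$ is the (extended) metric flow pair corresponding to the pointed Ricci flow $(M_{k},\overline{g}_{k}(t),(z_{k,i_{k}},0))_{t<0}$ with $\overline{g}_{k}(t)=g_{k}(t+t_{i_{k}})$, whose singular time $T_{k}-t_{i_{k}}>0$ is strictly positive and whose basepoint lies in a regular time-slice. Consequently, the conjugate heat flow $\eta^{k,i_{k}}_{t}$ is given by the standard heat kernel measure based at $(z_{k,i_{k}},0)$, so the sequence fits verbatim into the framework in which Li and Wang established their structure theorem.

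The execution would then proceed as follows. First invoke Proposition \ref{prop-convergence} to obtain the correspondence $\overline{\C}$ along which $(\X^{k,i_{k}},(\eta^{k,i_{k}}_{t})_{t\leq 0}) \to (\X^{\infty},(\mu^{\infty}_{t})_{t\leq 0})$ on compact time-intervals of $(-\infty,0]$. Next apply \cite[Theorem~6.12]{heatkernel2} to this sequence and its limit to extract the exhaustion $(U_{k})_{k\in\N}$ of $\reg^{\infty}$, the corresponding time-preserving diffeomorphisms $\phi_{k}\colon U_{k}\to V_{k}\subseteq M_{k}\times(-\infty,0)$, and the rates $\varepsilon_{k}\to 0$ for which the pullbacks of $\overline{g}^{k}$, $\partial^{k}_{t}$, and $\overline{w}^{k}$ converge in $C^{[\varepsilon_{k}^{-1}]}(U_{k})$ to $g^{\infty}$, $\partial^{\infty}_{\mathfrak{t}}$, and $w^{\infty}$ respectively. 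This yields item~(a). Items~(b) and~(c), describing how convergence within $\overline{\C}$ is detected by the preimages $\phi_{k}^{-1}(\y_{k})$ and how the maps $\varphi^{k}_{t}\circ \phi_{k}$ approximate $\varphi^{\infty}_{t}$ uniformly on compact subsets of $\reg^{\infty}_{t}$ at continuity times, are also part of the output of that theorem.

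The step that would deserve the most care is verifying that the hypotheses of \cite[Theorem~6.12]{heatkernel2} are uniformly satisfied across the approximating sequence. In particular, one needs to transfer the uniform lower bound on the entropy assumed in Theorem \ref{good-convergence-intro}; but since Perelman's $\mu$-functional is invariant under time-shifting and parabolic rescaling, the assumed bound $\mu_{g_{k}}\geq \underline{\mu}>-\infty$ descends to the shifted flows $(M_{k},\overline{g}_{k}(t))$. The remaining requirements — $H_{n}$-concentration, future continuity of the limit, and regularity of the basepoint time-slices — are already built into Section \ref{shrinker-mf-section} and into the construction of the indices $i_{k}$ performed in Section \ref{some-lemmas-section}. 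I do not anticipate a genuine obstacle beyond this bookkeeping, since all of the substantive analytic work has been arranged precisely to enable a direct appeal to the Li--Wang structure theorem.
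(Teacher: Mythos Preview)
Your proposal is correct and follows exactly the paper's approach: the paper simply states that Proposition~\ref{good-convergence-prop} follows ``in the same way'' as Theorem~\ref{good-convergence-intro}, namely by applying \cite[Theorem~6.12]{heatkernel2} to the approximating sequence furnished by Proposition~\ref{prop-convergence}. Your additional remarks on transferring the entropy lower bound via the scale- and shift-invariance of Perelman's $\mu$-functional are correct bookkeeping that the paper leaves implicit.
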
 

Before continuing, we make some remarks about Theorem \ref{good-convergence-intro} and Proposition \ref{good-convergence-prop}.

\begin{rem}\label{good-convergence-remark} \begin{enumerate}
\item From the third point in the statement of Theorem \ref{good-convergence-intro}, it follows that each $\reg^{\infty}_{t}$ is connected, and therefore $\reg$ is connected too, being $\mathfrak{t}$ a submersion.
\item By definition, $\overline{w}^{k}(x,t)=H_{k}(z_{i_{k}},t_{i_{k}};x,t+t_{i_{k}})$ for every $x\in M_{k}$ and $t<0$, where $H_{k}$ is the (standard) heat kernel of $(M_{k},g_{k}(t))_{t<T_{k}}$. 
\item From \cite[Theorem~9.12]{Ba20_compactness} one has $w^{\infty}(\y)=K(\x_{0}; \y)$ for every $\y \in \reg^{\infty}$, where 
\begin{equation*}
K \colon \{(\x,\y)\in \X^{\infty}\times \reg^{\infty}\colon \mathfrak{t}(\x)> \mathfrak{t}(\y)\}\longrightarrow \R
\end{equation*}
is the heat kernel of $\X^{\infty}$, i.e.~$d\nu^{\infty}_{\x; s}=K(\x; \cdot)dV_{{g}^{\infty}_{s}}$ on $\reg_{s}^{\infty}$ for all $\x\in \X^{\infty}$. Moreover, $K$ is strictly positive, $K(\x; \cdot)\colon \reg^{\infty}_{< t}\to \R$ is smooth and solves the conjugate heat equation. 
\item In general, if $(\M,\mathfrak{t},\partial_{\mathfrak{t}},g)$ is a Ricci flow spacetime, the $C^{m}$-norm of a tensor field $A \in \Gamma^{h}_{k}(\ker d\mathfrak{t})$ over a subset $\Omega\subseteq \M$ is defined by 
\begin{equation*}
\|A\|_{C^{m}(\Omega)}=\sum_{r+s\leq m}\|\nabla^{r}\partial_{\mathfrak{t}}^{s}A\|_{L^{\infty}(\Omega)} + \|\partial_{\mathfrak{t}}^{s}\nabla^{r} A\|_{L^{\infty}(\Omega)},
\end{equation*}
where $\partial_{\mathfrak{t}}^{s}$ stands for the composition of the Lie derivative $\mathcal{L}_{\partial_{\mathfrak{t}}}$ with itself $s$ times and $\nabla$ is the covariant derivative induced by $g_{t}$ on the time-slice $\M_{t}$. In this context, if $A$ is a function then $\nabla A$ denotes its differential $d A$, and not its gradient vector field. 
\end{enumerate}
\end{rem}

From the smooth convergence on $\reg^{\infty}$ follows smooth convergence on every time-slice. Indeed, let $t<0$ and consider $\reg^{\infty}_{t}\subseteq \reg^{\infty}$. Defining $W_{k}=U_{k}\cap \reg_{t}^{\infty}$ and $\Phi_{k}\colon W_{k}\to M_{k}$ as the composition $\Phi_{k}=\pi_{1}\circ \phi_{k}\vert_{W_{k}}$, where $\pi_{1}\colon M_{k}\times (-\infty,0)\to M_{k}$ is the projection on the spatial component, then $(W_{k})_{k\in \N}$ is an exhaustion of $\reg_{t}^{\infty}$ by open subsets while each $\Phi_{k}$ is a smooth embedding, and it is easy to verify that 
\begin{align*}
\| \Phi_{k}^{\ast}\overline{g}^{k}(t)-g_{t}^{\infty}\|_{C^{[\varepsilon_{k}^{-1}]}(W_{k})} &\leq \varepsilon_{k},\\
\|\Phi_{k}^{\ast}\overline{w}^{k}_{t}-w^{\infty}_{t}\|_{C^{[\varepsilon_{k}^{-1}]}(W_{k})}& \leq \varepsilon_{k},
\end{align*} 
where $\overline{w}^{k}_{t}=\overline{w}^{k}(\cdot,t)$ and $w^{\infty}_{t}=w^{\infty}\vert_{\reg^{\infty}_{t}}$, and the norms are taken with respect to the metric $g^{\infty}_{t}$ at fixed time. 

It is possible to be more precise about the structure of tangent flows of $\X^{\infty}$.

\begin{prop}[Structure Theory for Tangent Flows]\label{metric-soliton-structure} 
Let $(\X^{\infty},(\mu^{\infty}_{t})_{t\leq 0})$ be as above and consider a tangent flow $(\X, (\nu_{\x;t})_{t \leq 0})$ of $\X^{\infty}$ at some point 
$\x_{\infty}$.
\begin{enumerate}
\item $(\X, (\nu_{\x;t})_{t \leq 0})$ is again the limit of a sequence of metric flow pairs induced by Ricci shrinkers and therefore it satisfies all the properties seen for $\X^{\infty}$. In particular, we have a regular-singular decomposition $\X_{<0}=\reg\sqcup \s $, where $\reg$ has the structure of a Ricci flow spacetime $(\reg,\mathfrak{t},\partial_{\mathfrak{t}},g)$ over $(-\infty,0)$.
\item $\X$ is the Gaussian soliton if and only if $\x_{\infty}\in \reg^{\infty}$.
\item Writing $\tau=-t$ and $d\nu_{\x_{\infty}}=K(\x;\cdot)dV_{g}=(4\pi\tau)^{-n/2}e^{-F}dV_{g}$ on $\reg$, then
\begin{equation}\label{metric-soliton-id}
\Ric + \nabla^{2}F - \frac{1}{2\tau}g=0, \quad \quad \tau(S+|\nabla F|^{2})+F=W, \quad \quad \partial_{\mathfrak{t}}F=|\nabla F|^{2},
\end{equation}
where $W$ is a constant.
\item There exist a singular space $(X,d)$ of dimension $n$, a probability measure $\mu$ on $X$ and an identification $\X_{<0}=X\times (-\infty,0)$ such that the following hold for every time $t<0$.
\begin{enumerate}
\item $(\X_{t},d_{t})=(X\times\{t\}, \sqrt{|t|}d)$.
\item $\reg=\reg_{X}\times (-\infty,0)$ and $\partial_{\mathfrak{t}}-\nabla f$ corresponds to the standard vector field on the second factor, i.e.~$\partial/\partial t$.
\item $(\reg_{t},g_{t})=(\reg_{X}\times\{t\}, |t|g_{X})$.
\item $\nu_{\x;t}=\mu$.
\end{enumerate}
\item There exists a unique family of probability measures $(\nu_{x;t})_{x\in X, t >0}$ on $X$ such that the tuple
\begin{equation*}
(X,d,\mu,(\nu_{x;t})_{x\in X,t > 0})
\end{equation*}
is a model for $(\X_{<0}, (\nu_{\x;t})_{t<0})$ corresponding to the same identification. Therefore, $(X,d,\mu)$ can be identified with $(\X_{<0}, (\nu_{\x;t})_{t<0})$ up to flow isometry.
\item $\mu(\s_{X})=0$ and $d\mu=(4\pi)^{-n/2}e^{-f_{X}}dV_{g_{X}}$ on $\reg_{X}$, where $f_{X}=F_{-1}\in C^{\infty}(\reg_{X})$ satisfies the first two soliton identities in \eqref{metric-soliton-id}.
\item We have $\dim_{\M}\s_{X}\leq n-4$, where $\dim_{\M}$ denotes the Minkowski dimension. 
\end{enumerate} 
\end{prop}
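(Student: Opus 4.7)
The plan is to reduce Proposition \ref{metric-soliton-structure} to the corresponding metric soliton structure theorem for tangent flows of $\F$-limits of Ricci flows induced by Ricci shrinkers, as established in \cite[Theorem~6.12]{heatkernel2} (adapting Bamler \cite{Ba20_structure}). The key observation is that the tangent flow $\X$ at $\x_\infty$ is itself realised as an $\F$-limit of metric flow pairs induced by Ricci shrinkers, so the machinery of Sections \ref{conv-result-section} and \ref{reg-results-section} applies directly.

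First I would show that the tangent flow construction commutes with the approximating procedure of Proposition \ref{prop-convergence}. Let $\lambda_k\nearrow\infty$ be scales realising $(\X,(\nu_{\x_\infty;t})_{t\leq 0})$ as the $\F$-limit of the parabolic rescalings at $\x_\infty$ of $(\X^{\infty},(\mu^{\infty}_t)_{t\leq 0})$. Using Remark \ref{double-conv-rem} with $m_k=\lambda_k$, I would pick the indices $i_k$ from Lemma \ref{double-conv-lemma} large enough that the parabolic rescalings by $\lambda_k$ of the approximating pairs $(\X^{k,i_k},(\eta^{k,i_k}_t)_{t\leq 0})$, repointed at lifts of $\x_\infty$ through the correspondence $\overline{\C}$, $\F$-converge to the same limit $\X$. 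Since each such rescaled pair is itself induced by a Ricci shrinker in the sense of Definition \ref{ricci-flow-induced}, Theorems \ref{scaling-compactness-thm-intro}--\ref{good-convergence-intro} and Proposition \ref{good-convergence-prop} apply to $\X$, proving (1). For (2), if $\x_\infty\in\reg^\infty$ then Proposition \ref{good-convergence-prop} gives smooth convergence in a spacetime neighbourhood of $\x_\infty$, so the rescalings converge to flat $\R^n$, i.e.\ to the Gaussian soliton; conversely, if $\X$ is Gaussian then $\n_{\x_\infty}(0^-)=0$ and Bamler's $\varepsilon$-regularity (in the Ricci-shrinker form of \cite{heatkernel2}) places $\x_\infty$ in $\reg^\infty$.

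I would then extract the soliton identities. By the monotonicity of the pointed Nash entropy $\n_{\x_\infty}$ along parabolic rescalings at $\x_\infty$, the tangent flow has Nash entropy constant in time, hence is a \emph{metric soliton} in the sense of Bamler. On $\reg$, Proposition \ref{good-convergence-prop} permits passing the equality case of Perelman's monotonicity formula to the smooth limit, yielding the three identities \eqref{metric-soliton-id} with $F=-\log\bigl((4\pi\tau)^{n/2}w^\infty\bigr)$ and $W$ equal to the constant value of $\n_{\x_\infty}$. The vector field $\partial_\mathfrak{t}-\nabla F$ on $\reg$ then integrates to a self-similar $\R$-action on $\X_{<0}$ that extends by density to a flow-isometric $\R$-action on the whole metric flow, giving the product identification in (4), the model in (5), and the weighted description $d\mu=(4\pi)^{-n/2}e^{-f_X}dV_{g_X}$ of (6) with $f_X=F|_{\tau=1}$; the statement $\mu(\s_X)=0$ comes from $\mu^\infty_t(\s^\infty_t)=0$ transported by self-similarity.

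The main obstacle is assertion (7), the improved bound $\dim_\M\s_X\leq n-4$. Theorem \ref{good-convergence-intro} only furnishes $\dim_{\M^\ast}\s^\infty\leq n-2$; the two additional codimensions for metric solitons reflect the absence of a translating direction in $\X$ and require the sharper stratification theorem together with the $\varepsilon$-regularity statement at points of nearly minimal Nash-entropy drop. This is exactly the content of \cite[Theorem~2.47]{Ba20_structure}, whose Ricci-shrinker analogue is recorded in \cite[Theorem~6.12]{heatkernel2}; I would apply the latter directly to the approximating sequence constructed in the second paragraph, noting that all of its hypotheses are inherited from Theorem \ref{scaling-compactness-thm-intro} and Remark \ref{good-convergence-remark}, and then read off the dimension bound on the cross-section $\s_X$ via the self-similar identification from (4).
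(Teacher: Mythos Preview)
Your approach is correct and matches the paper's: both reduce Proposition \ref{metric-soliton-structure} to the metric-soliton structure theory in \cite[Theorem~6.12]{heatkernel2} (adapting \cite[Theorem~2.18]{Ba20_structure}), applied via the approximating sequence of Proposition \ref{prop-convergence}. The paper's proof is simply a citation to those results and the discussion after \cite[Proposition~6.20]{heatkernel2}; your version unpacks the diagonalisation and the soliton-identity derivation explicitly, which is more than the paper itself supplies but follows the same route.
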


\begin{proof}
The proof of these statements is implicitly contained in the proof of \cite[Proposition~6.12]{heatkernel2}, see the discussion after \cite[Proposition~6.20]{heatkernel2}, and consists in verifying that the arguments needed for Bamler's results, see \cite[Theorem~2.18]{Ba20_structure}, carry on in the setting of Ricci flows induced by Ricci shrinkers even without curvature bounds. 
\end{proof}

\section{Blow-up Sequences}\label{blow-up-section}

In this section, we first apply the preceding results to the case of blow-up sequences, proving Theorem \ref{thm-blow-up-intro}. Then, under a Type I scalar curvature bound at the point $q$, we prove the splitting results from Theorem \ref{thm-splitting-intro}. Finally, in the last subsection we consider the four-dimensional case, proving Theorems \ref{thm-splitting-four-intro} and \ref{ends-four-dim}.

In all of this section, we let $(M,g,f)$ be a (single) Ricci shrinker and consider a blow-up sequence $(M,g_{k}(t),q)_{k\in \N}$ based at the point $q\in M$, with scales $\lambda_{k}\nearrow \infty$. By construction, each element is given by the parabolically rescaled pointed Ricci flow $(M,g_{-1,\lambda_{k}}(t), (q,0))_{t<0}$, and we can consider its corresponding (extended) metric flow pair $(\X^{k},(\mu^{k}_{t})_{t\leq 0})$ over $I=(-\infty,0]$. The sequence of metric flow pairs over $I=(-\infty,0]$ constructed in this way satisfies all the assumptions needed for Theorems \ref{scaling-compactness-thm-intro} and \ref{good-convergence-intro}, and Propositions \ref{good-convergence-prop} and \ref{metric-soliton-structure} to work. Therefore, after passing to a subsequence, we obtain all the objects and properties seen in the preceding sections; in particular, we will adopt the various notations used so far with their obvious meaning. Finally, denote by $(\X,(\mu_{t})_{t \leq 1})$ the (extended) metric flow pair corresponding to the pointed Ricci flow $(M,g(t),(q,1))_{t<1}$ associated with the Ricci shrinker $(M,g,f)$.   

\begin{proof}[Proof of Theorem \ref{thm-blow-up-intro}]
First of all, each $(\X^{k}, (\mu^{k}_{t})_{t \leq 0})$ is a parabolic rescaling of $(\X, (\mu_{t})_{t \leq 1})$, and the same remains true if we apply a time-shift $t\mapsto t+1$ to $(\X, (\mu_{t})_{t \leq 1})$; the new metric flow pair over $(-\infty,0]$ just obtained will be denoted with the same notation, and obviously is the (extended) metric flow pair corresponding to the pointed Ricci flow $(M,g(1+t), (q,0))_{t<0}$.
Therefore, recalling \cite[Definition~6.22]{Ba20_compactness}, it follows that the limit metric flow pair $(\X^{\infty}, (\mu^{\infty}_{t})_{t\leq 0})$ is a tangent flow of $(\X,(\mu_{t})_{t\leq 0})$ at the point $\x_{0}=(q,0)$.

Now, consider the constant sequence $(\X, (\mu_{t})_{t\leq 0})_{k\in \N}$.  Applying Theorem \ref{scaling-compactness-thm-intro} to it produces a limit metric flow pair $(\Y^{\infty},(\sigma_{t}^{\infty})_{t \leq 0}) \in \F_{(-\infty,0]}^{\ast}(H_{n})$, and it is easy to see that this limit is $(\X, (\mu_{t})_{t\leq 0})$ itself. 
Indeed, defining the trivial correspondence $\C_{0}=\left((Z_{t},d^{Z}_{t})_{t \leq 0}, (\varphi^{k}_{t})_{t\leq 0, k\in \N}\right)$, setting $Z_{t}=\X_{t}$ and $\varphi^{k}_{t}=\id_{\X_{t}}$ for every $t \leq 0$ and $k\in \N$, then $(\X, (\mu_{t})_{t\leq 0})_{k\in \N}$ converges to $(\X, (\mu_{t})_{t\leq 0})$ within $\C_{0}$ on compact time-intervals. Moreover, since metric flow pair $(\X, (\mu_{t})_{t\leq 0})$ is future continuous, $H_{n}$-concentrated and represents a class in $\F^{\ast}_{(-\infty,0]}(H_{n})$, the uniqueness part of Proposition \ref{scaling-compactness-thm-intro} implies that $(\Y^{\infty},(\sigma_{t}^{\infty})_{t \leq 0})$ and $(\X,(\mu)_{t \leq 0})$ are in fact isometric over $(-\infty,0]$, as claimed. 

For all these reasons, it makes sense to apply the second point of Proposition \ref{good-convergence-intro} to $\X$ in place of $\X^{\infty}$, obtaining the theorem up to the continuity statement. Finally, this continuity statement follows from Proposition \ref{ms-cont}.
\end{proof}

\subsection{Splitting Results}

In order to derive more information about the geometry of the limit $\X^{\infty}$ and obtain better convergence results, the first step consists in establishing when a point $(q,t)\in M\times [0,1)$ is an $H$-center of $(q,1)$ with respect to the singular time heat kernel measure $\nu_{(q,1);s}$. This is where the Type~I scalar curvature bound comes in.

\begin{prop}[$H$-Center of Singular Time Heat Kernel]\label{q-is-center} 
Suppose that the scalar curvature at the point $q\in M$ satisfies the Type~I bound from Definition \ref{def.TypeIscalar} with a constant $L>0$. Then there exists a positive constant $H=H(n,L)>0$ such that, for every $t\in [0,1)$, the point $(q,t)$ is an $H$-center of $(q,1)$. In particular, in this case the results from Section \ref{reg-results-section} work with $z_{i_{k}}=q$ and
\begin{equation*}
\eta^{k,i_{k}}_{t}=\nu_{(q,1+t_{i_{k}}/\lambda_{k});1+(t_{i_{k}}+t)/\lambda_{k}}
\end{equation*}
for every $t<0$ and $k\in \N$.
\end{prop}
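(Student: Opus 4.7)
The plan is to first establish the analogous variance bound for the approximating regular-time heat kernel measures $\nu_{(q,\mathsf{t}_i);t}$, uniformly in $i$, and then pass it to the singular-time limit via lower semicontinuity.

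The key input at the regular-time level is Perelman's reduced distance $\ell_{(q,\mathsf{t}_i)}$ based at $(q,\mathsf{t}_i)$. Testing it with the constant path $\gamma(\bar\tau)\equiv q$ (which connects $q$ at time $\mathsf{t}_i$ to $q$ at time $t$) and using the Type~I bound $S(q,s)\leq L/(1-s)$ together with $1-(\mathsf{t}_i-\bar\tau)=(1-\mathsf{t}_i)+\bar\tau\geq\bar\tau$ for $\mathsf{t}_i\leq 1$, one obtains, writing $\tau_i=\mathsf{t}_i-t$,
\begin{equation*}
\ell_{(q,\mathsf{t}_i)}(q,t)\;\leq\;\frac{1}{2\sqrt{\tau_i}}\int_0^{\tau_i}\!\sqrt{\bar\tau}\,S(q,\mathsf{t}_i-\bar\tau)\,d\bar\tau\;\leq\;\frac{L}{2\sqrt{\tau_i}}\int_0^{\tau_i}\!\frac{d\bar\tau}{\sqrt{\bar\tau}}\;=\;L,
\end{equation*}
a bound that is uniform in $i$. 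Bamler's concentration machinery (monotonicity of the reduced volume together with the Cao--Hamilton-type estimate $H\leq(4\pi\tau)^{-n/2}e^{-\ell}$ for the standard heat kernel) then translates this pointwise $\ell$-bound at $q$ into the concentration inequality
\begin{equation*}
\var_t(\nu_{(q,\mathsf{t}_i);t},\delta_q)\;\leq\;H(n,L)\,\tau_i\;\leq\;H(n,L)(1-t),
\end{equation*}
with $H=H(n,L)$ independent of $i$.

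The limit step is then straightforward: by Proposition~\ref{weakconv} the measures $\nu_{(q,\mathsf{t}_i);t}$ converge narrowly to $\nu_{q;t}$ as $\mathsf{t}_i\nearrow 1$, and the lower semicontinuity of variance from Proposition~\ref{lsc-var-prop} yields $\var_t(\nu_{q;t},\delta_q)\leq H(n,L)(1-t)$, which is exactly the statement that $(q,t)$ is an $H(n,L)$-center of $(q,1)$. For the ``in particular'' part, parabolic rescaling and the time-shift send the shrinker-spacetime point $(q,1+t_{i_k}/\lambda_k)$ to $(q,t_{i_k})\in\X^k_{t_{i_k}}$ via \eqref{eq.parresc}; since the notion of $H$-center is invariant under this transformation, $(q,t_{i_k})$ is an $H(n,L)$-center of $\x^k_0=(q,0)\in\X^k_0$, provided $1+t_{i_k}/\lambda_k\in[0,1)$, which can be ensured by Remark~\ref{double-conv-rem} applied with $m_k=1/\lambda_k$. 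We may therefore choose $\z_{k,i_k}=(q,t_{i_k})$, so that $z_{i_k}=q$, and $\eta^{k,i_k}_t=\nu_{\z_{k,i_k};t+t_{i_k}}$ is, by definition of the parabolically rescaled heat kernel measure, precisely $\nu_{(q,1+t_{i_k}/\lambda_k);1+(t_{i_k}+t)/\lambda_k}$ in the unrescaled spacetime.

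The main obstacle is the concentration step that converts $\ell(q,t)\leq L$ into the pointwise variance bound from $\delta_q$. The pointwise bound on $\ell$ at the base point alone is not a concentration statement; one must combine it with integrability of $\ell$ under $\nu_{(q,\mathsf{t}_i);t}$ (via Perelman's $\mathcal{W}$-entropy monotonicity and the nonnegativity $S\geq 0$ on a Ricci shrinker) and a Cauchy--Schwarz-type comparison between $d_t(q,\cdot)^2$ and the $\mathcal{L}$-energy along $\mathcal{L}$-geodesics. The subtlety is that the $\mathcal{L}$-functional measures velocity in the evolving metric rather than at the fixed time $t$, so one needs the Type~I bound at $q$ to compare these metrics uniformly in $i$ along the relevant $\mathcal{L}$-geodesics.
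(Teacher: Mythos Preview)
Your strategy matches the paper's: bound $\ell_{(q,\mathsf{t}_i)}(q,t)\leq L$ via the constant path, deduce that $(q,t)$ is an $H_0(n,L)$-center of $(q,\mathsf{t}_i)$ at regular times, then pass to the singular-time limit (the paper does this last step via the triangle inequality for $\sqrt{\var}$ together with $d_{W_1}$-convergence rather than direct lower semicontinuity, arriving at the explicit constant $H=2H_0+8H_n$). For your self-flagged ``obstacle'' the paper simply invokes \cite[Proposition~4.4]{kahler} as a black box; that argument goes through Perelman's Harnack-type lower bound on the heat kernel at the point $q$ itself and does not need the curvature comparison along $\mathcal{L}$-geodesics that you worry about, so your Cauchy--Schwarz sketch---which would indeed fail for the reason you identify, since the Type~I bound is only at $q$---can be replaced by a citation.
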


\begin{proof}
For the standard heat kernel (at regular times), this follows directly from Perelman's Harnack inequality \cite{perelman} and the fact that the Type~I scalar curvature bound gives a suitable bound on the reduced length functional, see for example \cite[Proposition~4.4]{kahler}. One could obtain an analogous result working with the reduced length based at the singular time from \cite{EMT11}, but it is easier to work with regular time heat kernels and the fact that $\sqrt{\var(\cdot,\cdot)}$ satisfies a triangle inequality.

To this end, let $\mathsf{t}_{i}\nearrow 1$ be the sequence of times defining the singular time heat kernel $v_{q}$ and fix $t\in [0,1)$. For $\mathsf{t}_{i}>t$ we have
\begin{equation*}
S(q,\tau)\leq \frac{L}{1-\tau}\leq \frac{L}{\mathsf{t}_{i}-\tau}
\end{equation*}
for every $\tau\in [t,\mathsf{t}_{i})$. Then, as mentioned above, $(q,t)$ is an $H_{0}$-center of $(q,\mathsf{t}_{i})$ for some $H_{0}=H_{0}(n,L)>0$, thanks to \cite[Proposition~4.4]{kahler}. Therefore, using Propositions \ref{lsc-var-prop}, \ref{concentration-shrinker}, and \ref{centers-rescaling}
\begin{align*}
\var_{t}(\delta_{q},\nu_{(q,1);t})&\leq 2\var_{t}(\delta_{q},\nu_{(q,\mathsf{t}_{i});t}) + 2\var_{t}(\nu_{(q,\mathsf{t}_{i});t},\nu_{(q,1);t}) \\
&\leq 2H_{0}(\mathsf{t}_{i}-t) + 4 d_{W_{1}}(\nu_{(q,\mathsf{t}_{i});t},\nu_{(q,1);t})^{2} + 4\var_{t}(\nu_{(q,\mathsf{t}_{i});t}) 
+ 4\var_{t}(\nu_{(q,1);t})\\
&\leq (2H_{0}+8H_{n})(1-t)+4 d_{W_{1}}(\nu_{(q,\mathsf{t}_{i});t},\nu_{(q,1);t})^{2},
\end{align*}
and taking the limit as $\mathsf{t}_{i}\nearrow 1$ we conclude defining $H=2H_{0}+8H_{n}$.
\end{proof} 

Before continuing, we clarify what it means that a metric flow splits a line.

\begin{defn} Let $\X$ be a metric flow over $(-\infty,0)$. We say that $\X$ \emph{splits a line} if there exists a metric flow 
$\widetilde{\X}$ over $(-\infty,0)$ such that 
\begin{equation*}
\X \simeq \widetilde{\X} \times \R,
\end{equation*}
where the Cartesian product of two metric flows is given by \cite[Definition~3.38]{Ba20_compactness}.
\end{defn}

We are ready to prove Theorem \ref{thm-splitting-intro}. The proof is similar to the one of \cite[Theorem~4.6]{kahler}.

\begin{proof}[Proof of Theorem~\ref{thm-splitting-intro}]
Recalling Proposition \ref{good-convergence-prop}, let $\overline{g}_{k}(t)=g_{k}(t+t_{i_{k}})$ and consider the pointed Ricci flows $(M,\overline{g}_{k}(t), (q,0))_{t<0}$. Introduce, for every $k\in \N$, the function $f_{k}\colon M\times (-\infty,0]\to \R$ defined by:
\begin{equation*}
f_{k}(x,t)=c_{k}(F_{k}(x,t)-F_{k}(q,0)),
\end{equation*}
where 
\begin{equation*}
F_{k}(x,t)=F(x,1+(t+t_{i_{k}})/\lambda_{k})
\end{equation*}
for the ``rescaled potential'' $F$ introduced in the beginning of Section \ref{shrinker-preliminaries-section} and 
\begin{equation*}
c_{k}=\lambda_{k}^{\frac{1}{2}}F_{k}(q,0)^{-\frac{1}{2}}.
\end{equation*} 
Observe that $F_{k}(q,0)$ is always positive thanks to the assumptions on $q$, and in particular the definition of $c_{k}$ makes sense.

We think of the functions $f_{k}$ as coupled with the Ricci flow $(M,\overline{g}_{k}(t))_{t<0}$, and we are going to prove that the sequence $(\phi_{k}^{\ast} f_{k})_{k\in \N}$ pre-converges, in $C^{\infty}_{\text{loc}}(\reg^{\infty})$, to a smooth function $f_{\infty}\in C^{\infty}(\reg^{\infty})$. In order to do this we need to give uniform estimates on the derivatives of $\phi^{\ast}_{k}f_{k}$ on compact subsets; we divide the proof into several claims. 

First of all, denoting $\sigma_{k}=|t+t_{i_{k}}|$, standard computations give:
\begin{align}
|\nabla_{\overline{g}_{k}}f_{k}|^{2}_{\overline{g}_{k}}&=1+\lambda_{k}^{-1}c_{k}f_{k}-
\lambda_{k}^{-2}c_{k}^{2}\sigma_{k}^{2} S_{\overline{g}_{k}}, \label{gradient-norm} \\
\nabla^{2}_{\overline{g}_{k}}f_{k}&=\frac{1}{2}\lambda_{k}^{-1}c_{k}\overline{g}_{k}-
\lambda_{k}^{-1}c_{k}\sigma_{k}\Ric_{\overline{g}_{k}}, \label{hessian}\\
\partial_{t}f_{k}&=-\lambda_{k}^{-1}\sigma_{k} c_{k}S_{\overline{g}_{k}}. \label{time-derivative}
\end{align}

\begin{claim} 
The sequence $(F_{k}(q,0))_{k\in \N}$ is bounded below by a positive constant and we have
\begin{equation*}
 \lim_{k\to \infty}\lambda_{k}^{-1}c_{k}=0.
 \end{equation*}
 \end{claim}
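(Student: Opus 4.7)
The plan is to study $\phi(t) := F(q, t)$ on $[0, 1)$, show it is non-increasing with a strictly positive limit as $t \nearrow 1$, and deduce both statements from this. First, by Remark~\ref{double-conv-rem} applied with $m_k = \lambda_k$, one may arrange $|t_{i_k}| \leq 1/\lambda_k$, so that $s_k := 1 + t_{i_k}/\lambda_k \in [0, 1)$ satisfies $s_k \to 1$ and $F_k(q, 0) = \phi(s_k)$. Monotonicity of $\phi$ is immediate from \eqref{potential-time-deriv} and \eqref{scalar-curvature-time}: $\phi'(t) = -(1-t) S(q,t) = -S(\psi_t(q)) \leq 0$. Since $f - \mu_g \geq 0$ by \eqref{fgrowth}, one has $\phi \geq 0$, so the limit $\phi_\infty := \lim_{t \nearrow 1} \phi(t) \geq 0$ exists and it suffices to show that $\phi_\infty > 0$.

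The main step is therefore proving $\phi_\infty > 0$, which is where both assumptions on $q$ enter. I would reparametrize by $s := -\log(1-t)$, so that $\widetilde{\gamma}(s) := \psi_{1-e^{-s}}(q)$ is the integral curve of $\nabla f$ through $q$ and $\phi(1-e^{-s}) = e^{-s} w(s)$ with $w(s) := f(\widetilde{\gamma}(s)) - \mu_g$. The soliton identity $|\nabla f|^2 = f - S - \mu_g$ (coming from the normalised value $c(g,f) = -\mu_g$) together with $\widetilde{\gamma}'(s) = \nabla f(\widetilde{\gamma}(s))$ gives the ODE $w'(s) = w(s) - S(\widetilde{\gamma}(s))$, which rearranges to $(\log(e^{-s}w))' = -S(\widetilde{\gamma})/w$. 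The assumption $d_g(p, \psi_t(q)) \to \infty$ combined with \eqref{fgrowth} yields $w(s) \to \infty$, while \eqref{scalar-curvature-time} converts the Type~I bound at $q$ into the uniform bound $S(\widetilde{\gamma}(s)) \leq L$ for all $s \geq 0$.

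The hard part is then a Gr\"onwall-type argument applied to $w' = w - S \geq w - L$: once $w(s_0) > L$ (which eventually holds since $w \to \infty$), one obtains $w(s) \geq L + (w(s_0) - L) e^{s - s_0}$ for all $s \geq s_0$. This exponential lower bound makes $S(\widetilde{\gamma})/w$ integrable on $[0, \infty)$, and integrating the ODE for $\log(e^{-s}w)$ yields $\phi_\infty = w(0) \exp\bigl(-\int_0^\infty S(\widetilde{\gamma}(\sigma))/w(\sigma) \, d\sigma\bigr) > 0$. Assembling the pieces, $F_k(q, 0) = \phi(s_k) \geq \phi_\infty > 0$ gives the first assertion, and $\lambda_k^{-1} c_k = (\lambda_k F_k(q, 0))^{-1/2} \leq (\lambda_k \phi_\infty)^{-1/2} \to 0$ then gives the second. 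The main obstacle is precisely this Gr\"onwall step: turning the qualitative growth $w \to \infty$ into the quantitative exponential lower bound that ensures integrability of $S/w$, for which the interplay between the soliton identity and the Type~I scalar bound at $q$ is essential.
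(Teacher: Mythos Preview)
Your proof is correct and follows essentially the same approach as the paper: both use the soliton identity together with the Type~I bound $S(\psi_t(q))\leq L$ to derive the differential inequality $w'\geq w-L$ for $w=f\circ\widetilde\gamma-\mu_g$ (the paper writes the equivalent inequality in the $t$-variable), integrate it once $w>L$ to obtain exponential growth, and read off a uniform positive lower bound for $F(q,t)$ near $t=1$. The paper is slightly more direct---it concludes $F(q,t)\geq c$ immediately from the exponential lower bound on $w$ rather than passing through your integral representation $\phi_\infty=w(0)\exp\bigl(-\int_0^\infty S/w\bigr)$---and it does not invoke Remark~\ref{double-conv-rem}, since $t_{i_k}\to 0$ and $\lambda_k\to\infty$ already give $1+t_{i_k}/\lambda_k\to 1$.
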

 
\begin{proof}
We have $F(q,t)=(1-t)(f(\psi_{t}(q))-\mu_{g})$ and 
\begin{equation*}
\partial_{t}f(\psi_{t}(q))=\frac{|\nabla f|^{2}(\psi_{t}(q))}{1-t}=\frac{f(\psi_{t}(q))-\mu_{g}-S(\psi_{t}(q))}{1-t}\geq \frac{f(\psi_{t}(q))-\mu_{g}-L}{1-t}.
\end{equation*}
Here we used \eqref{scalar-curvature-time} and the Type~I scalar curvature assumption in the last step. Since $d(p,\psi_{t}(q))$ goes to infinity as $t\nearrow 1$, there exists $t_{0}\in (0,1)$ such that $f(\psi_{t}(q))-\mu_{g}>L$ for every 
$t\geq t_{0}$. 
Therefore, we obtain the differential inequality
\begin{equation*}
\frac{\partial_{t}f(\psi_{t}(q))}{f(\psi_{t}(q))-\mu_{g}-L}\geq \frac{1}{1-t},
\end{equation*}
which gives
\begin{equation*}
f(\psi_{t}(q))-\mu_{g}\geq L+\frac{f(\psi_{t_{0}}(q))-\mu_{g}-L}{1-t}=L+\frac{c}{1-t}
\end{equation*}
for every $t\in [t_{0},1)$, with $c>0$ positive constant. 

It follows that $F(q,t)=(1-t)\left(f(\psi_{t}(q))-\mu_{g}\right)\geq c$ for $t\in [t_{0},1)$ and the claim follows, since $F_{k}(q,0)=F(q,1+t_{i_{k}}/\lambda_{k})$ and $1+t_{i_{k}}/\lambda_{k} \geq t_{0}$ for $k$ large. 
\end{proof}

Let $K\subseteq \reg^{\infty}$ be a compact subset and let $\delta\in (0,1)$ be such that $K\subseteq \reg^{\infty}_{t<-\delta}$.
Fix a point $\y\in \reg_{\tau}^{\infty}$, where $\tau\in (-\delta/2,0)$, and define $(y_{k},\tau)=\phi_{k}(\y)$. We recall that the convergence is uniform at this time $\tau$. 

\begin{claim} 
There exists $L_{1}>0$ such that $d_{\overline{g}_{k}(\tau)}(q,y_{k})\leq L_{1}$ for every $k\in \N$.
\end{claim}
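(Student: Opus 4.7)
The plan is to combine the Type~I scalar curvature bound at $q$ with the convergence of the approximating sequence on the regular part. First I would exhibit $q$ as an $H$-center of the basepoint $\z_{k,i_k}$ in each rescaled-and-shifted flow $\X^{k,i_k}$, uniformly in $k$. Then I would apply Proposition \ref{conv-centers} together with the strict convergence $\phi_{k}(\y)\to\y$ from Proposition \ref{good-convergence-prop}(b) to produce the claimed distance bound.

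For the first step, note that in the unrescaled flow the basepoint $\z_{k,i_k}$ corresponds to $(q,1+t_{i_{k}}/\lambda_{k})$, which is a regular time. The Type~I bound $S(q,t)\leq L/(1-t)$ together with the trivial inequality $1-t\geq (1+t_{i_{k}}/\lambda_{k})-t$ (using $t_{i_{k}}<0$) gives the same Type~I bound relative to this earlier basepoint time, with the same constant $L$. Perelman's Harnack inequality and the reduced-length estimate used in the proof of Proposition \ref{q-is-center} (compare \cite[Proposition~4.4]{kahler}) then furnish a constant $H_{0}=H_{0}(n,L)\geq H_{n}$ such that $(q,\tau_{k})$ is an $H_{0}$-center of $(q,1+t_{i_{k}}/\lambda_{k})$ in the original flow, where $\tau_{k}=1+(\tau+t_{i_{k}})/\lambda_{k}$. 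Parabolically rescaling by $\lambda_{k}$ and time-shifting by $t_{i_{k}}$ then turns this into the statement that $q\in\X^{k,i_{k}}_{\tau}$ is an $H_{0}$-center of $\z_{k,i_{k}}\in\X^{k,i_{k}}_{0}$.

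For the second step, Proposition \ref{conv-centers} applied at time $\tau$ (at which the convergence is uniform, as recalled just before the claim) with the centers $\z_{k}=q$ produces points $\y_{k}\in\X^{k,i_{k}}_{\tau}$ and $\y_{\infty}\in\X^{\infty}_{\tau}$ such that $\y_{k}\to\y_{\infty}$ strictly within $\overline{\C}$ and $d^{k,i_{k}}_{\tau}(q,\y_{k})\leq r$ for some constant $r>0$. On the other hand, since $\y\in\reg^{\infty}_{\tau}$, Proposition \ref{good-convergence-prop}(b) applied to $\phi_{k}(\y)=(y_{k},\tau)$ gives $(y_{k},\tau)\to\y$ strictly within $\overline{\C}$. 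Because each $\varphi^{k}_{\tau}$ is an isometric embedding into $Z_{\tau}$, one then has
\[
d^{k,i_{k}}_{\tau}\bigl(\y_{k},(y_{k},\tau)\bigr) = d^{Z}_{\tau}\bigl(\varphi^{k}_{\tau}(\y_{k}),\varphi^{k}_{\tau}((y_{k},\tau))\bigr) \longrightarrow d^{\infty}_{\tau}(\y_{\infty},\y),
\]
which is finite. The triangle inequality then yields $d_{\overline{g}_{k}(\tau)}(q,y_{k})\leq r+d^{\infty}_{\tau}(\y_{\infty},\y)+1$ for all sufficiently large $k$, which is the desired constant $L_{1}$.

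The main obstacle is the bookkeeping in Step~1: one has to translate the Type~I bound, stated in Definition \ref{def.TypeIscalar} relative to the singular time $t=1$ of the unrescaled flow, into a Type~I bound relative to the regular basepoint time $1+t_{i_{k}}/\lambda_{k}$ of each approximating flow, and then correctly rescale and time-shift the resulting $H$-center assertion. Once this is set up, the $H$-center theory for metric flows and the correspondence/convergence machinery combine immediately.
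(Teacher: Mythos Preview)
Your argument is correct and rests on the same two ingredients as the paper's proof --- that $(q,\tau)$ is an $H$-center of the basepoint in each $\X^{k,i_{k}}$ (from the Type~I bound), and the smooth convergence of the approximating sequence on $\reg^{\infty}$ --- but you package them differently. The paper argues by contradiction: if $d_{\overline{g}_{k}(\tau)}(q,y_{k})\to\infty$, then the $H$-center concentration bound forces $\eta^{k,i_{k}}_{\tau}(B_{\overline{g}_{k}(\tau)}(y_{k},1))\to 0$, while a direct density computation using Point~(a) of Proposition~\ref{good-convergence-prop} shows $\eta^{k,i_{k}}_{\tau}(\Phi_{k}(A))\to\mu^{\infty}_{\tau}(A)>0$ for a small ball $A$ around $\y$, which is a contradiction. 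Your route instead feeds the $H$-center property into Proposition~\ref{conv-centers} to produce convergent points $\y_{k}\to\y_{\infty}$ at bounded distance from $q$, and then compares these with $(y_{k},\tau)\to\y$ inside the ambient space $Z_{\tau}$ via the triangle inequality. This is cleaner in that it reuses an already-proved abstract statement rather than redoing a measure estimate, at the cost of being less self-contained.

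One small correction: in your second step you invoke Proposition~\ref{good-convergence-prop}(b), but that item only gives convergence \emph{within} $\overline{\C}$, whereas for the displayed limit $d^{Z}_{\tau}(\varphi^{k}_{\tau}(\y_{k}),\varphi^{k}_{\tau}((y_{k},\tau)))\to d^{\infty}_{\tau}(\y_{\infty},\y)$ you need \emph{strict} convergence $\varphi^{k}_{\tau}((y_{k},\tau))\to\varphi^{\infty}_{\tau}(\y)$ in $Z_{\tau}$. This is exactly what Point~(c) of Proposition~\ref{good-convergence-prop} provides, since the convergence is uniform at time $\tau$ (as you noted); just cite (c) instead of (b) there.
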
 

\begin{proof}
Suppose by contradiction that, up to a subsequence, $r_{k}=d_{\overline{g}_{k}(\tau)}(q,y_{k})\to \infty$.
First of all, by the assumption and Proposition \ref{q-is-center} it follows that $(q,\tau)$ is an $H$-center of $(q,0)$ 
for $\eta^{k,i_{k}}_{t}$.
Since the balls $B_{\overline{g}_{k}(\tau)}(q,r_{k}/2)$ and $B_{\overline{g}_{k}(\tau)}(y_{k},1)$ are disjoint when $r_{k}>2$,
we have
\begin{equation*}
\eta^{k,i_{k}}_{\tau}\left(B_{\overline{g}_{k}(\tau)}(y_{k},1)\right)\leq 1-\eta_{\tau}^{k,i_{k}}\left(B_{\overline{g}_{k}(\tau)}(q,r_{k}/2)\right)
\leq 4H/r_{k}^{2}\longrightarrow 0,
\end{equation*}
where we used \cite[Lemma~3.26]{Ba20_compactness}. 

Let $B_{g^{\infty}_{\tau}}(\y,\varepsilon)\subseteq \reg_{\tau}^{\infty}$ be a geodesic ball with compact closure in 
$\reg_{\tau}^{\infty}$, where
$\varepsilon \in (0,1/4)$. Then, recalling the discussion after Remark \ref{good-convergence-remark},
$A=\overline{B}_{g^{\infty}_{\tau}}(\y,\delta) \subseteq W_{k}$ for large $k$, and we can write, using
Point (a) from Proposition \ref{good-convergence-prop},
\begin{align*}
\mu_{\tau}(A)-\eta^{k,i_{k}}_{\tau}(\Phi_{k}(A))&=\int_{A} (w^{\infty}(\x)-\Phi^{\ast}_{k}\overline{w}^{k}(\x))dV_{g^{\infty}_{\tau}}(\x) \\
&\quad + \int_{A}\Phi^{\ast}_{k}\overline{w}^{k}(\x)dV_{g^{\infty}_{\tau}}(\x) - 
\int_{A}\Phi^{\ast}_{k}\overline{w}^{k}(\x)dV_{\Phi_{k}^{\ast}\overline{g}_{k}(\tau)}(\x).
\end{align*}
For the first term, we have
\begin{align*}
\left |\int_{A} (w^{\infty}(\x)-\Phi^{\ast}_{k}\overline{w}^{k}(\x))dV_{g^{\infty}_{\tau}}(\x)\right |
&\leq \int_{A}|w^{\infty}(\x)-\Phi^{\ast}_{k}\overline{w}^{k}(\x)|dV_{g^{\infty}_{\tau}}(\x) \\
&\leq \Vol_{g^{\infty}_{\tau}}(A) \sup_{A} |w^{\infty}(\x)-\Phi^{\ast}_{k}\overline{w}^{k}(\x)| \longrightarrow 0
\end{align*}
thanks to smooth convergence. Furthermore,
\begin{align*}
&\left | \int_{A}\Phi^{\ast}_{k}\overline{w}^{k}(\x)dV_{g^{\infty}_{\tau}}(\x) - \int_{A}\Phi^{\ast}_{k}\overline{w}^{k}(\x)dV_{\Phi_{k}^{\ast}\overline{g}_{k}(\tau)}(\x)\right| \\
&\qquad\qquad\qquad\qquad \leq C(A)\sup_{A}|\Phi^{\ast}_{k}\overline{w}^{k}(\x)| \sup_{A} |g^{\infty}_{\tau}(\x) -  \Phi_{k}^{\ast}\overline{g}_{k}(\tau)(\x) \longrightarrow 0
\end{align*}
again by smooth convergence, and since $|\Phi^{\ast}_{k}\overline{w}^{k}|$ is uniformly bounded on $A$. We thus conclude that
\begin{equation*}
|\mu^{\infty}_{\tau}(A)-\eta^{k,i_{k}}_{\tau}(\Phi_{k}(A))|\longrightarrow 0.
\end{equation*}
Now, it is standard to see that $\Phi_{k}(A)\subseteq B_{\overline{g}_{k}(\tau)}(y_{k}, 2\varepsilon)\subseteq B_{\overline{g}_{k}(\tau)}(y_{k}, 1)$ for $k$ large, which implies $\mu^{\infty}_{\tau}(A)=0$, contradicting the fact that $\supp \mu^{\infty}_{\tau}=\X^{\infty}_{\tau}$.
\end{proof}

\begin{claim}\label{claim-stima-fk-yk} 
There exists $L_{2}>0$ such that $|f_{k}(y_{k},\tau)|\leq L_{2}$ for every $k\in \N$.
\end{claim}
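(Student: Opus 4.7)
The plan is to first show that $f_k(q,\tau)$ itself is uniformly bounded (in fact goes to zero), and then propagate this bound to $y_k$ by integrating the gradient of $f_k$ along a minimizing $\overline{g}_k(\tau)$-geodesic joining $q$ to $y_k$; such a geodesic exists by completeness of $M$ and has length at most $L_1$ by the previous claim.

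For the first step, I would use the identity \eqref{potential-time-deriv}, $\partial_t F = -\tau S$, together with the Type I scalar curvature bound at $q$. The chain rule gives
\begin{equation*}
\partial_t F_k(q,t) \,=\, \lambda_k^{-2}(t+t_{i_k})\, S\!\left(q,\, 1+(t+t_{i_k})/\lambda_k\right),
\end{equation*}
and the factor $|t+t_{i_k}|/\lambda_k = 1-\bigl(1+(t+t_{i_k})/\lambda_k\bigr)$ exactly cancels the potential singularity $1/(1-s)$ coming from the Type I estimate $S(q,s) \leq L/(1-s)$, so $|\partial_t F_k(q,t)| \leq L/\lambda_k$. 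Integrating from $0$ to $\tau$ and multiplying by $c_k = \lambda_k^{1/2} F_k(q,0)^{-1/2}$, which is $O(\lambda_k^{1/2})$ by the first claim in the current proof, yields $|f_k(q,\tau)| \lesssim \lambda_k^{-1/2}|\tau| \to 0$.

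For the propagation step, the key observation is that $h_k := f_k + \lambda_k/c_k = c_k F_k$ is nonnegative on $M \times (-\infty,0]$ (since $F \geq 0$ by \eqref{potential-curvature-eq}), and the gradient identity \eqref{gradient-norm} combined with $S \geq 0$ gives
\begin{equation*}
|\nabla_{\overline{g}_k} h_k|^2 \,=\, |\nabla_{\overline{g}_k} f_k|^2 \,\leq\, 1 + \lambda_k^{-1} c_k f_k \,=\, (\lambda_k^{-1} c_k)\, h_k.
\end{equation*}
Consequently $\sqrt{h_k}$ is Lipschitz on $(M,\overline{g}_k(\tau))$ with constant $\tfrac{1}{2}\sqrt{\lambda_k^{-1}c_k}$, which vanishes as $k\to\infty$ by the first claim. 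Evaluating this Lipschitz bound along the geodesic between $q$ and $y_k$, then squaring and subtracting out the constant $\lambda_k/c_k$, produces
\begin{equation*}
|f_k(y_k,\tau) - f_k(q,\tau)| \,\leq\, L_1\sqrt{1 + \lambda_k^{-1} c_k f_k(q,\tau)} \,+\, O(\lambda_k^{-1}c_k),
\end{equation*}
which, combined with the bound from the first step, gives a uniform constant $L_2$ as desired.

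The only real subtlety is recognizing that the gradient identity together with the nonnegativity of the scalar curvature allows for the square-root trick with $h_k = c_k F_k$, turning the otherwise awkward differential inequality $|\nabla f_k|^2 \leq 1 + \lambda_k^{-1} c_k f_k$ into a genuine small-Lipschitz estimate. A direct Grönwall argument along the geodesic would also work but produces a cruder bound; the sign handling when squaring the Lipschitz inequality in both directions is routine, since both sides are nonnegative.
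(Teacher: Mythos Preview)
Your proof is correct and follows essentially the same approach as the paper: both bound $f_k(q,\tau)$ by integrating the time-derivative identity \eqref{time-derivative} using the Type~I assumption, and then propagate to $y_k$ along a minimizing $\overline{g}_k(\tau)$-geodesic of length $\leq L_1$ via the square-root trick applied to the gradient identity \eqref{gradient-norm}. Your auxiliary function $h_k = c_k F_k$ and its Lipschitz square root are just a rescaling of the paper's $\alpha(t) = 1 + \lambda_k^{-1}c_k f_k(\gamma(t),\tau) = \lambda_k^{-1}c_k\, h_k(\gamma(t),\tau)$, for which the paper integrates the equivalent inequality $|\dot{\alpha}| \leq \lambda_k^{-1}c_k L_1\sqrt{\alpha}$; the resulting bound $|f_k(y_k,\tau)-f_k(q,\tau)| \leq \tfrac{1}{4}L_1^2\lambda_k^{-1}c_k + L_1\sqrt{1+\lambda_k^{-1}c_k f_k(q,\tau)}$ is identical in both arguments.
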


\begin{proof}
First of all, we confront $f_{k}(y_{k},\tau)$ with $f_{k}(q,\tau)$. If $\gamma\colon [0,1]\to M$ is a minimising geodesic for the metric $\overline{g}_{k}(\tau)$ connecting $q$ to $y_{k}$, then 
\begin{equation*}
|\partial_{t}f_{k}(\gamma(t),\tau)|=|\langle \nabla_{\overline{g}_{k}(\tau)} f_{k}(\gamma(t),\tau),\dot{\gamma}(t)\rangle |
\leq L_{1} |\nabla_{\overline{g}_{k}(\tau)} f_{k}(\gamma(t),\tau)|_{\overline{g}_{k}(\tau)} 
\leq L_{1} \sqrt{1+\lambda_{k}^{-1}c_{k}f_{k}(\gamma(t),\tau)}.
\end{equation*}
Therefore, if we define $\alpha(t)=1+\lambda_{k}^{-1}c_{k}f_{k}(\gamma(t),\tau)$, the preceding inequality implies
\begin{equation*}
 |\dot{\alpha}(t)| \leq \lambda_{k}^{-1}c_{k}L_{1}\sqrt{\alpha(t)}.
\end{equation*}
Integrating over $[0,1]$ we obtain
\begin{equation*}
\left |\sqrt{\alpha(1)}- \sqrt{\alpha(0)}\right |\leq \frac{1}{2}L_{1}\lambda_{k}^{-1}c_{k},
\end{equation*} 
and using the elementary inequality $|x-y|\leq |\sqrt{x}-\sqrt{y}|^{2}+2\sqrt{y}|\sqrt{x}-\sqrt{y}|$ it follows that
\begin{equation*}
\left |\alpha(1)-\alpha(0)\right | \leq \frac{1}{4}L_{1}^{2}\lambda_{k}^{-2}c_{k}^{2} + L_{1}\lambda_{k}^{-1}c_{k}\sqrt{\alpha(0)}.
\end{equation*}
Using the definition of $\alpha(t)$, we get
\begin{equation*}
|f_{k}(y_{k},\tau)- f_{k}(q,\tau)|\leq \frac{1}{4}L_{1}^{2}\lambda_{k}^{-1}c_{k}+L_{1}\sqrt{1+\lambda_{k}^{-1}c_{k}f_{k}(q,\tau)}.
\end{equation*}
Now we estimate $f_{k}(q,\tau)$ using $f_{k}(q,0)=0$. Consider the function $\beta(t)=f_{k}(q,t)$ for $t\in [\tau,0]$. Then
\begin{equation*}
|\dot{\beta}(t)|=|\partial_{t}f_{k}(q,t)|= \lambda_{k}^{-1} c_{k}\sigma_{k}(t) S_{\overline{g}_{k}(t)}(q,t) \leq \lambda_{k}^{-1}c_{k}L.
\end{equation*}
It follows that
\begin{equation*}
|\beta(\tau)-\beta(0)|\leq \lambda_{k}^{-1}c_{k}L |\tau| \leq \lambda_{k}^{-1}c_{k}L,
\end{equation*}
and hence
\begin{equation*}
|f_{k}(q,\tau)|\leq \lambda_{k}^{-1}c_{k}L.
\end{equation*}
Finally, we obtain
\begin{align*}
|f_{k}(y_{k},\tau)| &\leq \frac{1}{4}L_{1}^{2}\lambda_{k}^{-1}c_{k}+L_{1}\sqrt{1+\lambda_{k}^{-1}c_{k}f_{k}(q,\tau)} + 
\lambda_{k}^{-1}c_{k}L \\
&\leq \frac{1}{4}L_{1}^{2}\lambda_{k}^{-1}c_{k}+L_{1}\sqrt{1+\lambda_{k}^{-2}c_{k}^{2}L} + \lambda_{k}^{-1}c_{k}L.
\end{align*}
Since $\lambda_{k}^{-1}c_{k}\to 0$ as $k\to \infty$, there exists $L_{2}>0$ such that $|f_{k}(y_{k},\tau)|\leq L_{2}$ for every $k\in \N$,
as desired.
\end{proof}

The next result will enable us to control the functions $f_{k}$ over the whole compact set $K$ using curves. Observe that we can endow $\reg^{\infty}$ with the Riemannian metric $\overline{g}_{\infty}=g_{\infty} + d\mathfrak{t}^{2}$, thanks to the decomposition $T\reg^{\infty} = \ker d \mathfrak{t} \oplus \langle \partial_{\mathfrak{t}}\rangle$. Here and in the rest of the proof we write $\mathfrak{t}$ instead of $\mathfrak{t}^{\infty}$ to simplify the notations. 

\begin{claim} 
For every $\z\in K$ there exists a piecewise smooth curve $\gamma_{\z}\colon I_{\z} \to \reg^{\infty}$, such that
\begin{enumerate}
\item $\gamma_{\z}$ connects $\z$ to $\y$;
\item $\bigcup_{\z\in K}\gamma_{\z}(I_{\z})$ is contained in a compact subset $\widetilde{K} \subseteq \reg^{\infty}$;
\item $\sup_{\z\in K}\sup_{t\in I_{\z}}|\dot{\gamma}_{\z}(t)|_{\overline{g}_{\infty}}<\infty$;
\item $I_{\z}=[0,b(\z)]$ and $\sup_{\z\in K}b(\z)<\infty$.
\end{enumerate}
\end{claim}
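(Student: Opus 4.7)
The plan is to exploit the fact that $(\reg^{\infty}, \overline{g}_{\infty})$ is a smooth (possibly incomplete) Riemannian manifold which is path-connected, and then use the compactness of $K$ to construct the family $\gamma_{\z}$ via a finite-cover argument: a uniformly bounded ``local'' portion joining $\z$ to one of finitely many fixed base points, attached to a once-and-for-all fixed ``link'' curve from that base point to $\y$.

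First I would observe that $\reg^{\infty}$ is an open smooth manifold by the first point of Theorem \ref{good-convergence-intro}, and is connected by Remark \ref{good-convergence-remark}(1); hence it is path-connected. Since $K$ is compact and contained in $\reg^{\infty}$, every $\z \in K$ admits a coordinate neighborhood $U_{\z} \subseteq \reg^{\infty}$ diffeomorphic to an open ball and with compact closure $\overline{U_{\z}} \subseteq \reg^{\infty}$. By compactness a finite subcover $U_{1},\dots,U_{N}$ of $K$ suffices. For each $i$ I would fix a base point $p_{i} \in U_{i}$ and, using path-connectedness of $\reg^{\infty}$, a piecewise smooth link curve $\sigma_{i}\colon [0, b_{i}] \to \reg^{\infty}$ from $p_{i}$ to $\y$, parametrized by $\overline{g}_{\infty}$-arc-length. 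I then set
\begin{equation*}
\widetilde{K} := \bigcup_{i=1}^{N}\bigl(\overline{U_{i}} \cup \sigma_{i}([0,b_{i}])\bigr),
\end{equation*}
which is a finite union of compact subsets of $\reg^{\infty}$ and hence compact in $\reg^{\infty}$.

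For the final step, given $\z \in K$, I would choose an index $i=i(\z)$ with $\z \in U_{i}$ and join $\z$ to $p_{i}$ inside $U_{i}$ by a smooth arc $\eta_{\z}$, parametrized by $\overline{g}_{\infty}$-arc-length, of length bounded by $D_{i} := \mathrm{diam}_{\overline{g}_{\infty}}(\overline{U_{i}}) < \infty$. Concatenating $\eta_{\z}$ with $\sigma_{i(\z)}$ gives the desired piecewise smooth curve $\gamma_{\z}\colon [0,b(\z)] \to \widetilde{K}$ with $b(\z) \leq \max_{i}(D_{i}+b_{i}) < \infty$ and $|\dot{\gamma}_{\z}|_{\overline{g}_{\infty}} \equiv 1$, so that items (1)--(4) are all satisfied.

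The only delicate point is ensuring that each $U_{\z}$ has closure inside $\reg^{\infty}$ (rather than accumulating on $\s^{\infty}$); this is automatic since $\reg^{\infty}$ is open in $\X^{\infty}$ and $K$ is a compact subset of it, so each $\z \in K$ has a neighborhood in $\reg^{\infty}$ whose closure is disjoint from $\s^{\infty}$. Note that this construction makes no use of the time-like vector field $\partial_{\mathfrak{t}}$ or of parabolic rescaling; the claim is purely a statement about the smooth manifold structure of the connected regular part.
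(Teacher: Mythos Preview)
Your proposal is correct and follows essentially the same approach as the paper: cover $K$ by finitely many balls with compact closure in $\reg^{\infty}$, fix unit-speed piecewise smooth link curves from their centers to $\y$, and concatenate with a short local arc inside the ball. The paper uses geodesic balls for $\overline{g}_{\infty}$ and radial geodesics where you use coordinate balls; the only imprecision is that your length bound $D_{i}=\diam_{\overline{g}_{\infty}}(\overline{U_{i}})$ should strictly be the \emph{intrinsic} diameter of $U_{i}$ (or you should take the $U_{i}$ small enough to be geodesically convex), but this is immaterial to the argument.
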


\begin{proof}
Since $K$ is compact we can find points $\z_{1},\ldots,\z_{m}\in K$ and geodesic balls for $\overline{g}_{\infty}$ covering $K$, i.e. $K\subseteq \bigcup_{i=1}^{m}B(\z_{i},\varepsilon_{i})$. Moreover, we can suppose that each of these balls has compact closure in $\reg^{\infty}$. For every $1\leq i \leq m$, let $\eta_{i}\colon J_{\z_{i}}\to \reg^{\infty}$ be a unit-speed piecewise smooth curve from $\z_{i}$ to $\y$. Now, given a general $\z\in K$ with $\z\in B(\z_{i},\varepsilon_{i})$, define $\gamma_{\z}\colon I_{\z}\to \reg^{\infty}$ as the composition of $\eta_{i}$ with a unit-speed geodesic inside $B(\z_{i},\varepsilon_{i})$ connecting $\z$ to $\z_{i}$, where $I_{\z}=[0,b(\z)]$. Then $\gamma_{\z}$ clearly satisfies the first and the third properties above, and moreover
\begin{equation*}
\bigcup_{\z\in K}\gamma_{\z}(I_{\z})\subseteq \bigcup_{i=1}^{m}\left(\overline{B}(\z_{i},\varepsilon_{i})\cup \eta_{i}(J_{\z_{i}})\right)=\widetilde{K},
\end{equation*}
which is compact. Finally, from the definition of $\gamma_{\z}$ it follows
\begin{equation*}
b(\z) = L_{\overline{g}_{\infty}}(\gamma_{\z})= d(\z,\z_{i}) + L_{\overline{g}_{\infty}}(\eta_{i}) \leq \max\left\{ \varepsilon_{j} + L_{\overline{g}_{\infty}}(\eta_{j})\colon j=1,\ldots,m \right\},
\end{equation*}
and we are done. 
\end{proof}

\begin{claim}\label{claim-est-fk} 
There exists $L_{3}>0$ such that $|f_{k}(\phi_{k}(\y))|\leq L_{3}$ for every $\y \in K$ and $k\in \N$.
\end{claim}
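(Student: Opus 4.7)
The plan is to mimic the argument of Claim \ref{claim-stima-fk-yk}, but now integrating along $\phi_k\circ \gamma_\z$ inside the spacetime $M\times(-\infty,0)$ rather than along a purely spatial geodesic. For $k$ large enough that $\widetilde K\subseteq U_k$ (which holds eventually, since $(U_k)_{k\in\N}$ exhausts $\reg^{\infty}$ and $\widetilde K$ is compact), set $h_\z(s)=f_k(\phi_k(\gamma_\z(s)))$ for $s\in [0,b(\z)]$; by construction $h_\z(b(\z))=f_k(\phi_k(\y))$, which is bounded by $L_2$ thanks to Claim \ref{claim-stima-fk-yk}.

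The main step is to prove a uniform differential inequality for $h_\z$. Decomposing $\dot\gamma_\z(s)=v(s)+a(s)\partial_{\mathfrak{t}}$ with $v(s)\in\ker d\mathfrak{t}$, the time-preservation of $\phi_k$ together with the estimates $\|\phi_k^{\ast}\overline{g}^{k}-g^{\infty}\|_{C^{0}(\widetilde K)}\to 0$ and $\|\phi_k^{\ast}\partial^{k}_{t}-\partial_{\mathfrak{t}}\|_{C^{0}(\widetilde K)}\to 0$ from Proposition \ref{good-convergence-prop}(a) shows that $d\phi_k(\dot\gamma_\z(s))$ decomposes into a spatial vector of uniformly bounded $\overline{g}_k$-norm plus a time component of size $|a(s)|$. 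Combining the chain rule with \eqref{gradient-norm} and \eqref{time-derivative}, using $S\geq 0$ to discard the negative term in the gradient-norm formula, and using the uniform bounds for $S_{\overline{g}_k}$ on $\phi_k(\widetilde K)$ (which follow from smooth convergence, since $S_{g^{\infty}}$ is bounded on $\widetilde K$) and for $\sigma_k$ on the compact time-interval $\mathfrak{t}(\widetilde K)$, I obtain an estimate of the form
$$|h_\z'(s)|\leq C_1\sqrt{1+\lambda_k^{-1}c_k h_\z(s)}+C_2\lambda_k^{-1}c_k,$$
with $C_1,C_2$ depending only on $K$ and $\widetilde K$.

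Setting $\alpha(s)=1+\lambda_k^{-1}c_k h_\z(s)$, which is nonnegative thanks to \eqref{gradient-norm}, this reads $|\alpha'(s)|\leq C_1\lambda_k^{-1}c_k\sqrt{\alpha(s)}+C_2\lambda_k^{-2}c_k^2$. Working with the regularization $\sqrt{\alpha(s)+\lambda_k^{-2}c_k^2}$ (needed to control $\alpha'/\sqrt{\alpha}$ near zeros of $\alpha$) yields that this quantity is Lipschitz on $[0,b(\z)]$ with constant of order $O(\lambda_k^{-1}c_k)$. Since $b(\z)$ is uniformly bounded and $|\alpha(b(\z))-1|\leq \lambda_k^{-1}c_k L_2$ from Claim \ref{claim-stima-fk-yk}, the triangle inequality produces $\bigl|\sqrt{\alpha(0)+\lambda_k^{-2}c_k^2}-1\bigr|=O(\lambda_k^{-1}c_k)$. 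Squaring (the factor $\sqrt{\alpha(0)+\lambda_k^{-2}c_k^2}+1$ being automatically bounded by this estimate) gives $|\alpha(0)-1|=O(\lambda_k^{-1}c_k)$, and therefore $|f_k(\phi_k(\z))|=(\lambda_k^{-1}c_k)^{-1}|\alpha(0)-1|$ is uniformly bounded, producing the desired constant $L_3$.

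The chief obstacle is preserving uniformity of all constants simultaneously in $\z\in K$ and $k\in\N$; this relies on the uniform speed bound for the curves $\gamma_\z$, the uniform bound on the parameter interval $b(\z)$, and the containment of their images in the single fixed compact set $\widetilde K\subseteq \reg^{\infty}$, all of which were established in the preceding claim. A secondary delicate point is handling the possible vanishing of $\alpha$, which is treated via the $\lambda_k^{-2}c_k^2$ regularization of exactly the right order to yield a Lipschitz estimate without introducing uncontrolled errors.
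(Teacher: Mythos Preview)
Your proposal is correct and follows essentially the same approach as the paper: integrate $f_k$ along the curves $\phi_k\circ\gamma_\z$, derive a differential inequality of the form $|\alpha'|\lesssim \lambda_k^{-1}c_k\sqrt{\alpha}$, and integrate to transfer the bound from the endpoint $\y$ to $\z$. The only cosmetic difference is that the paper sets $\alpha=2+\lambda_k^{-1}c_k f_k$ (so that $\alpha\geq 1$ automatically, avoiding any regularization), whereas you take $\alpha=1+\lambda_k^{-1}c_k f_k$ and handle the possible vanishing via the $\lambda_k^{-2}c_k^2$ shift; both tricks serve the same purpose and lead to the same conclusion.
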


\begin{proof}
Since $\widetilde{K}\subseteq \reg^{\infty}$ from the previous claim is compact, we have $\widetilde{K}\subseteq U_{k}$ for $k$ large enough. Define 
\begin{equation*}
\phi_{k}(\gamma_{\z}(t))=(\gamma^{k}_{\z}(t), s_{\z}(t))\in \phi_{k}(U_{k})
\end{equation*}
for any $\z\in K$ and $t\in [0,b(\z)]$, where $s_{\z}(t)=\mathfrak{t}(\gamma_{\z}(t))$. Then 
\begin{equation*}
\partial_{t}(f_{k}\circ \phi_{k}\circ \gamma_{\z})(t)= \left \langle \nabla_{\overline{g}_{k}(s(t))} f_{k}(\phi_{k}(\gamma_{\z}(t))), \dot{\gamma}^{k}_{\z}(t)\right \rangle + \partial_{t} f_{k}(\phi_{k}(\gamma_{z}(t))) \dot{s}_{\z}(t).
\end{equation*}
Writing $\dot{\gamma}_{\z}(t)=w(t)+a(t)\partial_{\mathfrak{t}}\in \ker d\mathfrak{t} \oplus \langle \partial_{\mathfrak{t}} \rangle$ we have, since $d\phi_{k}$ respects the tangent bundles decomposition,
\begin{equation*}
\dot{\gamma}_{\z}^{k}(t)=d\pi_{1}\left(d\phi_{k}(\dot{\gamma}_{\z}(t))\right)=d\phi_{k}(w(t)),
\end{equation*}
where $\pi_{1}\colon M\times (-\infty,0) \to M$ is the projection on the spatial component, while
\begin{equation*}
\dot{s}_{\z}(t)=d\mathfrak{t}(\dot{\gamma}_{\z}(t))=a(t).
\end{equation*}

In the following, $C$ denotes a positive constant that can vary from line to line, but which is independent of $\z\in K$ and $k\in \N$. From smooth convergence we have $\phi^{\ast}_{k}\overline{g}_{k}\leq Cg_{\infty}$, and therefore
\begin{equation*}
|\dot{\gamma}^{k}_{\z}(t)|_{\overline{g}_{k}}=|w(t)|_{\phi_{k}^{\ast}\overline{g}_{k}}\leq C|w(t)|_{g_{\infty}} 
\leq C|w(t)|_{\overline{g}_{\infty}}\leq C.
\end{equation*}
On the other hand, 
\begin{equation*}
|\dot{s}_{\z}(t)|=|a(t)|\leq |\dot{\gamma}_{\z}(t)|_{g_{\infty}}\leq 1
\end{equation*}
and therefore we have
\begin{align*}
|\partial_{t}(f_{k}\circ \phi_{k}\circ \gamma_{\z})(t)| &\leq C|\nabla_{\overline{g}_{k}} f_{k}(\phi_{k}(\gamma_{\z}(t)))|_{\overline{g}_{k}}
+ |\partial_{t}f_{k}(\phi_{k}(\gamma_{\z}(t)))| \\
&\leq C\sqrt{1+\lambda_{k}^{-1}c_{k}f_{k}(\phi_{k}(\gamma_{\z}(t)))} + 
\lambda_{k}^{-1}c_{k}\sigma_{k}(s_{\z}(t)) S_{\overline{g}_{k}}(\phi_{k}(\gamma_{\z}(t)))\\
&\leq C \left(\sqrt{1+\lambda_{k}^{-1}c_{k}f_{k}(\phi_{k}(\gamma_{\z}(t)))} + 
\lambda_{k}^{-1}c_{k}\sigma_{k}(s_{\z}(t)) \right) \\
&\leq C \left(\sqrt{1+\lambda_{k}^{-1}c_{k}f_{k}(\phi_{k}(\gamma_{\z}(t)))} + 
1 \right) \\
&\leq C \sqrt{2+\lambda_{k}^{-1}c_{k}f_{k}(\phi_{k}(\gamma_{\z}(t)))},
\end{align*}
where we used $S_{\overline{g}_{k}}(\phi_{k}(\gamma_{\z}(t)))\leq C$, which follows from smooth convergence,
$\lambda_{k}^{-1}c_{k} \to 0$ and $\sigma_{k}(s_{\z}(t))\leq C$.
Setting 
\begin{equation*}
\alpha(t)=2+\lambda_{k}^{-1}c_{k}f_{k}(\phi_{k}(\gamma_{\z}(t)),
\end{equation*}
it follows that
\begin{equation*}
\dot{\alpha}(t) \leq C\lambda_{k}^{-1}c_{k} \sqrt{\alpha(t)}.
\end{equation*}
Therefore, integrating over $I_{\z}=[0,b(\z)]$, we have 
\begin{equation*}
\left |\sqrt{\alpha(b(\z))}-\sqrt{\alpha(0)}\right |\leq \frac{1}{2}C\lambda_{k}^{-1}c_{k}b(\z)
\end{equation*}
and so
\begin{equation*}
|\alpha(b(\z))-\alpha(0)|\leq \frac{1}{4}C^{2}\lambda_{k}^{-2}c^{2}_{k}b(\z)^{2} + C\lambda_{k}^{-1}c_{k}b(\z)\sqrt{\alpha(b(\z))}.
\end{equation*}
Since $b(\z)\leq C$ we find
\begin{equation*}
|\alpha(b(\z))-\alpha(0)|\leq \frac{1}{4}C^{4}\lambda_{k}^{-2}c^{2}_{k} + C^{2}\lambda_{k}^{-1}c_{k}\sqrt{\alpha(b(\z))}.
\end{equation*}
Now, using the definition of $\alpha(t)$, we obtain
\begin{equation*}
|f_{k}(y_{k},\tau)-f_{k}(\phi_{k}(\z))| \leq \frac{1}{4}C^{4}\lambda_{k}^{-1}c_{k} + 
C^{2}\sqrt{2+\lambda_{k}^{-1}c_{k}f_{k}(y_{k},\tau)},
\end{equation*}
and therefore there exists $L_{3}>0$ such that
\begin{equation*}
|f_{k}(\phi_{k}(\z))|\leq L_{3}
\end{equation*}
for every $\z\in K$ and $k\in \N$, where we used Claim \ref{claim-stima-fk-yk}. 
\end{proof}

\begin{claim} 
After passing to a subsequence, the sequence of functions $(\phi_{k}^{\ast}f_{k})_{k\in \N}$ converges 
in $C^{\infty}_{\text{loc}}(\reg^{\infty})$ to a smooth function $f_{\infty}\in C^{\infty}(\reg^{\infty})$.
\end{claim}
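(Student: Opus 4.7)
The plan is to upgrade the $C^{0}$ bound from Claim \ref{claim-est-fk} to uniform $C^{m}$ bounds on $\phi_{k}^{\ast}f_{k}$ over arbitrary compact subsets of $\reg^{\infty}$ for every $m\in\N$, and then to extract a $C^{\infty}_{\text{loc}}$ subsequential limit by Arzel\`a--Ascoli plus a diagonal argument with respect to an exhaustion $(K_{j})_{j\in\N}$ of $\reg^{\infty}$ by compact sets.

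First, I would fix a compact $K\subseteq\reg^{\infty}$, noting that $K\subseteq U_{k}$ for all large $k$ by monotonicity of the $U_{k}$. On $\phi_{k}(K)$, Proposition \ref{good-convergence-prop}(a) gives uniform $C^{m}$ bounds (for any fixed $m$ and all large $k$) on the metric $\bar{g}_{k}$, the time vector field $\partial_{t}^{k}$, the Ricci and scalar curvatures, and the density $\bar{w}^{k}$, as $k\to\infty$. Combined with $|f_{k}|\leq L_{3}$, identity \eqref{gradient-norm} and $\lambda_{k}^{-1}c_{k}\to 0$, this immediately yields a uniform spatial $C^{1}$ bound for $\phi_{k}^{\ast}f_{k}$ on $K$.

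Second, the key observation is that identities \eqref{hessian} and \eqref{time-derivative} exhibit the spatial Hessian and the time derivative of $f_{k}$ as an explicit product of $\lambda_{k}^{-1}c_{k}$ with smooth geometric quantities whose spacetime $C^{m}$ norms on $\phi_{k}(K)$ are controlled by Proposition \ref{good-convergence-prop}(a). Covariantly differentiating \eqref{hessian} and \eqref{time-derivative} in space and time, and using the standard commutator formulas for $[\nabla,\partial_{\mathfrak{t}}]$ on a Ricci flow spacetime (which introduce only bounded curvature terms), I obtain bounds of the form
\begin{equation*}
\bigl\|\nabla^{r}\partial_{\mathfrak{t}}^{s}(\phi_{k}^{\ast}f_{k})\bigr\|_{L^{\infty}(K)} \leq C_{r,s}\,\lambda_{k}^{-1}c_{k} \longrightarrow 0
\end{equation*}
for all $r\geq 0$, $s\geq 0$ with $r+s\geq 1$ and $s\geq 1$, and $\|\nabla^{r}(\phi_{k}^{\ast}f_{k})\|_{L^{\infty}(K)}\leq C_{r}\lambda_{k}^{-1}c_{k}$ for $r\geq 2$; the purely spatial $C^{1}$ norm is bounded by a constant independent of $k$ via \eqref{gradient-norm}, and $\|\phi_{k}^{\ast}f_{k}\|_{L^{\infty}(K)}\leq L_{3}$ by Claim \ref{claim-est-fk}. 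In particular the sequence $(\phi_{k}^{\ast}f_{k})_{k\in\N}$ is uniformly bounded in the spacetime $C^{m}$ norm on $K$, for every $m\in\N$ and all sufficiently large $k$.

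Finally, applying the Arzel\`a--Ascoli theorem with respect to the Riemannian metric $\overline{g}_{\infty}=g_{\infty}+d\mathfrak{t}^{2}$ on $\reg^{\infty}$ introduced above, I extract on each compact $K_{j}$ and each $m$ a subsequence converging in $C^{m}(K_{j})$; a standard diagonal procedure over $j$ and $m$ produces a single subsequence and a limit $f_{\infty}\in C^{\infty}(\reg^{\infty})$ with $\phi_{k}^{\ast}f_{k}\to f_{\infty}$ in $C^{\infty}_{\text{loc}}(\reg^{\infty})$. The main (minor) obstacle is verifying that mixed space--time derivatives really are controlled, since $\phi_{k}$ intertwines $\partial_{\mathfrak{t}}^{\infty}$ only up to $O(\varepsilon_{k})$; however, this is exactly what Proposition \ref{good-convergence-prop}(a) gives in arbitrarily high order, so the error terms produced by the commutators are absorbed into constants that are harmless after passing to $k$ large. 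As a byproduct, the estimates above show that the limit $f_{\infty}$ satisfies $|\nabla f_{\infty}|_{g^{\infty}}\equiv 1$ and $\nabla^{2}f_{\infty}\equiv 0$, $\partial_{\mathfrak{t}^{\infty}}f_{\infty}\equiv 0$ on $\reg^{\infty}$, which will be the input for the splitting of a line in the subsequent step of the argument.
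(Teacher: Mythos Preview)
Your proposal is correct and follows essentially the same approach as the paper: use the $C^{0}$ bound from Claim \ref{claim-est-fk}, the identities \eqref{gradient-norm}, \eqref{hessian}, \eqref{time-derivative}, and the uniform curvature bounds from Proposition \ref{good-convergence-prop}(a) to obtain $C^{m}$ bounds on compacta, then conclude by Arzel\`a--Ascoli and a diagonal argument. The paper carries out the estimates first on $\Omega_{k}=\phi_{k}(K)$ with respect to $\overline{g}_{k}$ and then transfers them back to $K$, whereas you work directly on $K$; this is only a difference in presentation.
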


\begin{proof}
Let $K\subseteq \reg^{\infty}$ be a compact subset and fix an integer $m\in \N$. We want to derive uniform upper bounds for $\|\phi^{\ast}_{k}f_{k}\|_{C^{m}(K)}$, and in order to do so we first estimate the norms $\|f_{k}\|_{C^{m}_{\overline{g}_{k}}(\Omega_{k})}$, where $\Omega_{k}=\phi_{k}(K)\subseteq M\times (-\infty,0)$. Note that here and in the following the norms are as in \eqref{cm-norm-def}.

As usual, we denote by $C_{m}$ a positive constant depending on $K$ and $m$ but independent of $k\in \N$, that can vary from line to line, while $r,s\in \N$ is any pair of integers satisfying $r+s\leq m$.

First of all, if $m=0$, we conclude thanks to Claim \ref{claim-est-fk}; therefore, we assume $m\geq 1$. 
From Point~(a) in Proposition \ref{good-convergence-prop} it is easy to deduce that
\begin{equation}\label{unif-est-curvature}
\sup_{k\in \N}\| \Ric_{\overline{g}_{k}}\|_{C^{m}_{\overline{g}_{k}}(\Omega_{k})} \leq C_{m},
\quad \quad \sup_{k\in \N} \|S_{\overline{g}_{k}}\|_{C^{m}_{\overline{g}_{k}}(\Omega_{k})}\leq C_{m}.
\end{equation}

Consider the following system of equations 
\begin{equation}\label{system-eq}
\left\{
\begin{aligned}
\nabla^{2}_{\overline{g}_{k}}f_{k}(x,t) &=\frac{1}{2}\lambda_{k}^{-1}c_{k}\overline{g}_{k}(t)-
\lambda_{k}^{-1}c_{k}\sigma_{k}(t)\Ric_{\overline{g}_{k}(t)},\\
\partial_{t}f_{k}(x,t) &=-\lambda_{k}^{-1}c_{k}\sigma_{k}(t) S_{\overline{g}_{k}}(x,t).
\end{aligned}\right.
\end{equation}
From \eqref{system-eq}, using \eqref{gradient-norm}, Claim \ref{claim-est-fk} and \eqref{unif-est-curvature}, it follows
\begin{equation}\label{unif-est-f-order2}
\| \partial_{t}^{i}\nabla_{\overline{g}_{k}}^{j} f_{k}\|_{L^{\infty}(\Omega_{k})}
+ \| \nabla_{\overline{g}_{k}}^{j}\partial_{t}^{i} f_{k}\|_{L^{\infty}(\Omega_{k})} \leq C_{m}
\end{equation}
for every $k\in \N$ and $i,j\in \N$ with $i+j\leq 2$. 
Recall that here $\nabla_{\overline{g}_{k}} f_{k}$ stands for $d f_{k}$, and obviously 
$\partial_{t} (d f_{k})=d (\partial_{t}f_{k})$.
If $m\leq 2$ these are the desired bounds, and we are done.

Suppose $m>2$. Starting from the system \eqref{system-eq}, one can easily write down higher order equations for $f_{k}$ which, 
together with \eqref{unif-est-curvature} and \eqref{unif-est-f-order2}, give
\begin{equation}\label{unif-est-f}
\|\partial_{t}^{r}\nabla_{\overline{g}_{k}}^{s} f_{k}\|_{L^{\infty}(\Omega_{k})} + 
\|\nabla_{\overline{g}_{k}}^{s} \partial_{t}^{r} f_{k}\|_{L^{\infty}(\Omega_{k})} \leq C_{m}
\end{equation}
for every $k\in \N$. 
Now, from \eqref{unif-est-f}, Point~(a) in Proposition \ref{good-convergence-prop} and \eqref{unif-est-curvature}, we get
\begin{equation}
\|\partial_{\mathfrak{t}}^{r}\nabla_{g_{\infty}}^{s} \phi_{k}^{\ast} f_{k}\|_{L^{\infty}(K)} +
\|\nabla_{g_{\infty}}^{s} \partial_{\mathfrak{t}}^{r} \phi_{k}^{\ast} f_{k}\|_{L^{\infty}(K)} \leq C_{m}
\end{equation}
for every $k\in \N$, or in other words 
\begin{equation*}
\sup_{k\in \N} \|\phi^{\ast}_{k}f_{k}\|_{C^{m}_{g_{\infty}}(K)}\leq C_{m}.
\end{equation*}
Since $K\subseteq \reg^{\infty}$ and $m\in \N$ have been chosen arbitrarily, the convergence finally follows from the Arzelà-Ascoli Theorem, proving the claim.
\end{proof}

At this point, passing equations \eqref{gradient-norm}, \eqref{hessian}, \eqref{time-derivative} to the limit, we obtain
\begin{equation*}
|\nabla_{g^{\infty}} f_{\infty}|_{g_{\infty}}\equiv 1, \quad \quad \nabla^{2}_{g^{\infty}}f_{\infty} \equiv 0, 
\quad \quad \partial_{\mathfrak{t}}f_{\infty}\equiv 0
\end{equation*}
on $\reg^{\infty}$. We conclude using \cite[Theorem~15.50]{Ba20_structure}. 
\end{proof}

\subsection{The Four-Dimensional Case}

In this last part, we specialise what we have seen so far to dimension $n=4$. We start proving Theorem \ref{thm-splitting-four-intro}.

\begin{proof}[Proof of Theorem~\ref{thm-splitting-four-intro}]
From the previous results, we know that $\X^{\infty}$ splits a line, i.e.~$\X^{\infty}=\widetilde{\X}\times \R$ as metric flows. Moreover, by Bamler's result \cite[Theorem 2.46]{Ba20_structure} (respectively its generalisation from \cite[Theorem 6.12]{heatkernel2} to Ricci flows induced by Ricci shrinkers) each time slice $\X^{\infty}_{t}$ is a four-dimensional orbifold Ricci shrinker with isolated singularities. Since $\s_{t}^{\infty}=\widetilde{\s}_{t}\times \R$, it follows $\widetilde{\s}_{t}=\emptyset$, otherwise $\s^{\infty}_{t}$ would not be discrete; therefore, $\s^{\infty}=\emptyset$ and $\reg^{\infty}=\X^{\infty}$, as claimed. Note that we also have $\widetilde{\X}=\widetilde{\reg}$, and in the following we will write $\X^{\infty}$, $\widetilde{\X}$ instead of $\reg^{\infty}$ and $\widetilde{\reg}$, respectively.

Now, Proposition \ref{metric-soliton-structure} gives $\X^{\infty}=X\times (-\infty,0)$, where 
$(X,g_{X})=(\X^{\infty}_{-1},g^{\infty}_{-1})$ is a smooth Ricci shrinker of dimension four with potential $f_{X}$. 
In particular, $X=\X^{\infty}_{-1}=\widetilde{\X}_{-1}\times \R$ and $(N,g_{N})=(\widetilde{\X}_{-1}, \widetilde{g}_{-1})$
is a smooth three-dimensional Ricci shrinker, where the potential $\widetilde{f}$ is given by 
\begin{equation*}
f_{X}(x,y)=\widetilde{f}(x)+\frac{1}{4}|y-y_{0}|^{2},
\end{equation*}
for some $y_{0}\in \R$, as one can easily verify.
 
Since $\X^{\infty}=\reg^{\infty}$, the convergence from Proposition \ref{good-convergence-prop} is smooth everywhere. Fix $t=-1$, and recall that the convergence is uniform at this time. Therefore, Proposition \ref{conv-centers} gives points $\y_{k}\in \X^{k,i_{k}}_{-1}$ and $\y_{\infty}\in \X^{\infty}_{-1}$ such that $\y_{k}\to \y_{\infty}$ strictly within $\C$, and it follows $\phi_{k}^{-1}(\y_{k})\to \y_{\infty}$ in $\reg^{\infty}$ thanks to Point (b) in Proposition \ref{good-convergence-prop}.

Now, under the identification $\X^{\infty}_{-1}=X$, it follows that $(W_{k})_{k\in \N}$ forms an exhaustion of $X$ by open subsets
and $\Phi_{k}^{\ast}\overline{g}_{k}(-1)\to g_{X}$ in $C^{\infty}_{\text{loc}}(X)$; here we are using the notations after Remark 
\ref{good-convergence-remark}.
If we set $\y_{k}=(y_{k},-1)$ and $\y_{\infty}=(y_{\infty},-1)$, then obviously $y_{k}\in \Phi_{k}(W_{k})$ and 
$\Phi_{k}^{-1}(y_{k})\to y_{\infty}$ in $X$. This implies that 
\begin{equation*}
(M,\overline{g}_{k}(-1),y_{k})\longrightarrow (X,g_{X},y_{\infty})
\end{equation*}
in the pointed smooth Cheeger-Gromov sense. 
Again from Proposition \ref{conv-centers} we have $d_{\overline{g}_{k}(-1)}(q,y_{k})\leq r$ for some uniform $r>0$, 
and thus we can find $q_{\infty}\in X$ such that 
\begin{equation*}
(M,\overline{g}_{k}(-1), q)\longrightarrow (X,g_{X},q_{\infty})
\end{equation*}
in the pointed smooth Cheger-Gromov sense. 

We now translate the above convergence in terms of the original shrinker $(M,g, f)$. 
Start writing 
\begin{equation*}
\overline{g}_{k}(-1)=g_{k}(-1+t_{i_{k}})=\lambda_{k}g(1+(-1+t_{i_{k}})/\lambda_{k})=(1-t_{i_{k}})\psi^{\ast}_{s_{k}+\vartheta_{k}}g,
\end{equation*}
where $s_{k}=1-1/\lambda_{k}$ and $\vartheta_{k}=t_{i_{k}}/\lambda_{k}$. Note that $s_{k}\in (0,1)$, while $\vartheta_{k}\in (-1,0)$.

\begin{claim} 
Let $h_{k}=\psi^{\ast}_{s_{k}+\vartheta_{k}}g$. Then $\Phi_{k}^{\ast} h_{k}\to g_{X}$
in $C^{\infty}_{\text{loc}}(X)$. 
\end{claim}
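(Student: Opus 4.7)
The plan is to reduce the claim to the smooth convergence $\Phi_k^{*}\overline{g}_k(-1)\to g_X$ already established in Proposition \ref{good-convergence-prop}, by expressing $\overline{g}_k(-1)$ and $h_k$ in terms of one another with an explicit scalar factor.

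First, I would unwind the definitions. By construction
\begin{equation*}
\overline{g}_k(-1) = g_k(-1+t_{i_k}) = \lambda_k\, g\!\left(1+\tfrac{-1+t_{i_k}}{\lambda_k}\right) = \lambda_k\, g(s_k+\vartheta_k),
\end{equation*}
where $s_k=1-1/\lambda_k$ and $\vartheta_k = t_{i_k}/\lambda_k$, as introduced in the excerpt. Using the self-similarity identity $g(\tau)=(1-\tau)\psi_\tau^{*}g$ at $\tau=s_k+\vartheta_k$, together with $1-s_k-\vartheta_k=(1-t_{i_k})/\lambda_k$, I obtain
\begin{equation*}
g(s_k+\vartheta_k) = \tfrac{1-t_{i_k}}{\lambda_k}\,\psi_{s_k+\vartheta_k}^{*}g = \tfrac{1-t_{i_k}}{\lambda_k}\,h_k.
\end{equation*}
Substituting back yields the clean relation $\overline{g}_k(-1) = (1-t_{i_k})\,h_k$.

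Next, I would combine this with two ingredients: the smooth convergence $\Phi_k^{*}\overline{g}_k(-1)\to g_X$ in $C^{\infty}_{\mathrm{loc}}(X)$ provided by Proposition \ref{good-convergence-prop}(a) (or rather its time-slice version discussed after Remark \ref{good-convergence-remark}), and the fact that $t_{i_k}\to 0$ as $k\to\infty$. The latter holds because the indices $i_k$ constructed in Lemma \ref{double-conv-lemma} satisfy $i_k<i_{k+1}$, hence $i_k\to\infty$, and the original sequence $t_i\nearrow 0$ is thereby passed to a subsequence $t_{i_k}\to 0$. In particular, $1-t_{i_k}\to 1$, so this is a nonzero smooth scalar factor for $k$ large.

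Dividing the relation, I get $\Phi_k^{*}h_k = (1-t_{i_k})^{-1}\,\Phi_k^{*}\overline{g}_k(-1)$, and the right-hand side converges to $g_X$ in $C^{\infty}_{\mathrm{loc}}(X)$, which is the desired conclusion. There is no real obstacle here; the only subtlety worth flagging is the verification that $t_{i_k}\to 0$, which simply uses the monotonicity $t_i\nearrow 0$ and the extraction $i_k\nearrow\infty$.
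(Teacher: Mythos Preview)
Your proof is correct and follows essentially the same approach as the paper: both derive the relation $\overline{g}_k(-1)=(1-t_{i_k})h_k$ from self-similarity and combine $t_{i_k}\to 0$ with the established convergence $\Phi_k^{*}\overline{g}_k(-1)\to g_X$. The paper writes out a triangle inequality $|\nabla^m(\Phi_k^*h_k-g_X)|\le |\nabla^m(\Phi_k^*h_k-\Phi_k^*\overline{g}_k(-1))|+|\nabla^m(\Phi_k^*\overline{g}_k(-1)-g_X)|$ and bounds the first term by $\tfrac{|t_{i_k}|}{1-t_{i_k}}|\nabla^m\Phi_k^*\overline{g}_k(-1)|$, whereas you phrase it as the product of a scalar sequence tending to $1$ with a $C^\infty_{\mathrm{loc}}$-convergent sequence of tensors; these are the same computation.
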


\begin{proof}
Given a compact subset $K\subseteq X$, we have, with respect to the metric $g_{X}$,
\begin{equation*}
|\nabla^{m}(\Phi^{\ast}_{k}h_{k} - g_{X})| \leq |\nabla^{m}(\Phi^{\ast}_{k} h_{k} - \Phi_{k}^{\ast} \overline{g}_{k}(-1))| 
+ |\nabla^{m}(\Phi_{k}^{\ast}\overline{g}_{k}(-1)- g_{X})|,
\end{equation*}
and since the second term on the right hand side goes to zero as $k\to \infty$, it is sufficient to estimates the first one. 
Now,
\begin{equation*}
|\nabla^{m}(\Phi^{\ast}_{k}h_{k} - \Phi_{k}^{\ast}\overline{g}_{k}(-1))|=|t_{i_{k}}||\nabla^{m}\Phi^{\ast}_{k}h_{k}|
=\frac{|t_{i_{k}}|}{(1-t_{i_{k}})}|\nabla^{m}\Phi^{\ast}_{k}\overline{g}_{k}(-1)|.
\end{equation*}
Having smooth convergence on $K$, it follows that $|\nabla^{m}\Phi^{\ast}_{k}\overline{g}_{k}(-1)|\leq C$ for some positive constant $C>0$, and therefore
\begin{equation*}
|\nabla^{m}(\Phi^{\ast}_{k} h_{k} - \overline{g}_{k}(-1))| \longrightarrow 0
\end{equation*}
as $k\to \infty$; this concludes the proof. 
\end{proof}

The claim implies that $(M,\psi_{s_{k}+\vartheta_{k}}^{\ast}g, q)\to (X,g_{X},q_{\infty})$, and therefore from the invariance of smooth convergence under diffeomorphisms we obtain
\begin{equation*}
(M,g,\psi_{s_{k}+\vartheta_{k}}(q))\longrightarrow (X,g_{X},q_{\infty}).
\end{equation*}
Now we estimate the distance between $\psi_{s_{k}+\vartheta_{k}}(q)$ and $\psi_{s_{k}}(q)$. Defining $\gamma_{k}\colon [s_{k}+\vartheta_{k},s_{k}]\to M$ as $\gamma_{k}(\tau)=\psi_{\tau}(q)$, it follows that
\begin{align*}
d_{g}(\psi_{s_{k}+\vartheta_{k}}(q), \psi_{s_{k}}(q)) &\leq L_{g}(\gamma_{k})
=\int_{s_{k}+\vartheta_{k}}^{s_{k}} \frac{|\nabla_{g}f(\gamma_{k}(\tau))|}{1-\tau}d\tau 
\leq \int_{s_{k}+\vartheta_{k}}^{s_{k}} \frac{\sqrt{f(\gamma_{k}(\tau))-\mu_{g}}}{1-\tau}d\tau \\
&\leq \frac{\sqrt{f(\gamma_{k}(s_{k}))-\mu_{g}}}{1-s_{k}} \int_{s_{k}+\vartheta_{k}}^{s_{k}}d\tau
= |\vartheta_{k}| \frac{\sqrt{f(\gamma_{k}(s_{k}))-\mu_{g}}}{1-s_{k}},
\end{align*}
where we used that $f(\gamma_{k}(\tau))-\mu_{g}$ is non-decreasing and $(1-\tau)^{-1}\leq (1-s_{k})^{-1}$ on $[s_{k}+\vartheta_{k},s_{k}]$. Using the definitions of $s_{k}$ and $\vartheta_{k}$, we get
\begin{equation*}
d_{g}(\psi_{s_{k}+\vartheta_{k}}(q), \psi_{s_{k}}(q)) \leq |t_{i_{k}}| \sqrt{f(\psi_{1-1/\lambda_{k}}(q))-\mu_{g}}.
\end{equation*}
Recalling Remark \ref{double-conv-rem}, we can suppose that the right hand side of the above expression is bounded by $1$, or in other words $d_{g}(\psi_{s_{k}+\vartheta_{k}}(q), \psi_{s_{k}}(q)) \leq 1$. Therefore, setting $x_{k}=\psi_{s_{k}}(q)$ there exists a point $x_{\infty}\in X$ such that 
\begin{equation*}
(M,g,x_{k})\longrightarrow (X,d,x_{\infty})
\end{equation*}
in the pointed smooth Cheeger-Gromov sense, and we are done. 

For the last assertion, observe that $S(x_{k})\to S(x_{\infty})$ thanks to (pointed) smooth convergence, and we conclude from \cite{rps}, recalling what we already stated in Section \ref{shrinker-preliminaries-section}.
\end{proof}

At this point, our last result is almost immediate.

\begin{proof}[Proof of Theorem~\ref{ends-four-dim}]
First of all, the vector fields $\nabla f$ and $(1-t)^{-1}\nabla f$ have the same integral curves. Therefore, we can write $x_{k}=\psi_{t_{k}}(q)$ for some sequence of times $(t_{k})_{k\in \N}$. Now, $x_{k}\to \infty$ is equivalent to $f(x_{k})-\mu_{g} \to \infty$, thanks to \eqref{fgrowth}. Moreover, since $|\nabla f |^{2}\leq f-\mu_{g}$, we have $f(x_{k})-\mu_{g} \leq (1-t_{k})^{-1}(f(q)-\mu_{g})$ and therefore $t_{k}\to 1$. Up to a subsequence, we can assume that $t_{k}\nearrow 1$ monotonically. Now, define $\lambda_{k}=(1-t_{k})^{-1}$ and consider the blow-up sequence $(M,g_{k}(t),q)_{k\in \N}$ based at $q$ with scales $\lambda_{k}$. Finally, we conclude applying Theorem \ref{thm-splitting-four-intro} to this sequence, noting that $1-1/\lambda_{k}=t_{k}$.
\end{proof}

\appendix

\section{Schauder Estimates for the Conjugate Heat Equation}\label{schauder-appendix}

In the first part of this appendix, we recall some basic theory regarding classical parabolic partial differential equations; for reference the reader can consult \cite{krylov}. In the second part, we then apply the theory to the conjugate heat equation along a general Ricci flow.

\subsection{Parabolic Schauder Theory}

Let us start with some general notations. Points in $\R^{n+1}$ are denoted by $(x,t)=(x^{1},\ldots,x^{n},t)$, where $x^{i}$ are the spatial coordinates and $t$ is the time variable. If $Q\subseteq \R^{n+1}$ is a domain, the function space $C^{k}_{1}(Q)$ is given by
\begin{equation*}
C^{k}_{1}(Q)=\{u\colon Q\to \R\colon D^{\gamma}u, \partial_{t}u\in C^{0}(Q)\ \forall \gamma \in \N^{n}, |\gamma|\leq k\},
\end{equation*}
where $D^{\gamma}u=D^{\gamma}_{x}u$ has the usual meaning for a multi-index $\gamma\in \N^{n}$. Similarly, the symbol $D^{k}u$ stands for the collection of all the partial derivatives of $u$ of order $k$ with respect to the spatial variables $x$, or in other words
\begin{equation*}
D^{k}u=(D^{\gamma}u)_{|\gamma|= k}.
\end{equation*}
We can also introduce the supremum norm of $D^{k}u$, which is defined by
\begin{equation*}
\|D^{k}u\|_{L^{\infty}(Q)}=\max_{|\gamma|=k}\|D^{\gamma}u\|_{L^{\infty}(Q)}.
\end{equation*}

Given points $z_{1}=(x_{1},t_{1})$ and $z_{2}=(x_{2},t_{2})\in \R^{n+1}$, their parabolic distance is the number
\begin{equation*}
\varrho(z_{1},z_{2})=|x_{1}-x_{2}|+|t_{1}-t_{2}|^{1/2}.
\end{equation*}
For a fixed constant $\alpha \in (0,1)$ and a function $u\in C^{2}_{1}(Q)$, we define the following parabolic H\"older norms and semi-norms:
\begin{align*}
[u]_{\alpha,\alpha/2;Q}&=\sup_{z\neq w\in Q}\frac{|u(z)-u(w)|}{\varrho(z,w)^{\alpha}},\\
\| u \|_{\alpha,\alpha/2;Q}&=\| u \|_{L^{\infty}(Q)}+[u]_{\alpha,\alpha/2;Q},\\
[u]_{2+\alpha,1+\alpha/2;Q}&=[\partial_{t}u]_{\alpha,\alpha/2;Q}+[D^{2}u]_{\alpha,\alpha/2;Q},
\end{align*}
where
\begin{equation*}
[D^{2}u]_{\alpha,\alpha/2;Q}=\sum_{i,j=1}^{n}[\partial_{i}\partial_{j}u]_{\alpha,\alpha/2;Q}.
\end{equation*}
Finally, we set
\begin{equation*}
\| u \|_{2+\alpha,1+\alpha/2;Q}=\|u \|_{L^{\infty}(Q)}+\|\partial_{t}u\|_{L^{\infty}(Q)}+\|Du\|_{L^{\infty}(Q)}+\|D^{2}u\|_{L^{\infty}(Q)}
+[u]_{2+\alpha,1+\alpha/2;Q}.
\end{equation*}
At this point, one can define the following function spaces:
\begin{align*}
C^{\alpha,\alpha/2}(Q)&=\{u\in C^{2}_{1}(Q) \colon \|u\|_{\alpha,\alpha/2;Q}<\infty\},\\
C^{2+\alpha,1+\alpha/2}(Q)&=\{u\in C^{2}_{1}(Q) \colon \|u\|_{2+\alpha,1+\alpha/2;Q}<\infty\}.
\end{align*}

We will always work with linear differential operators $L$ of second order:
\begin{equation}\label{operator}
L=\sum_{i,j=1}^{n} a^{ij}(x,t)\partial_{i}\partial_{j}+\sum_{i=1}^{n} b^{i}(x,t)\partial_{i}+c(x,t),
\end{equation}
where the coefficients $a^{ij},b^{i},c$ are smooth functions and $a^{ij}=a^{ji}$.

For $r>0$ denote $Q_{r}=B(0,r)\times (-r^{2},0)$. One can prove the following interior estimates.

\begin{prop}[Schauder Estimates]\label{schauder} 
Let $\alpha \in (0,1)$ and $R>0$. Let $u \in C^{2+\alpha,1+\alpha/2}(Q_{3R})$ be a solution of
\begin{equation*}
\partial_{t}u=Lu+f 
\end{equation*}
on $Q_{3R}$, where $f\in C^{\alpha,\alpha/2}(Q_{3R})$ and $L$ is as in \eqref{operator}. Suppose that the coefficients of $L$ satisfy the following estimates:
\begin{align*}
&\|a^{ij}\|_{\alpha,\alpha/2;Q_{2R}},\; \|b^{i}\|_{\alpha,\alpha/2;Q_{2R}},\; \|c\|_{\alpha,\alpha/2;Q_{2R}}\leq \Lambda, \\
&\lambda^{-1}|\xi |^{2}\leq \sum_{i,j=1}^{n} a^{ij}(x,t)\xi_{i}\xi_{j}\leq \lambda |\xi |^{2}, \quad \forall \xi \in \R^{n}, (x,t)\in Q_{2R},
\end{align*}
for some $\Lambda, \lambda >0$. Then
\begin{equation*}
\|u \|_{2+\alpha,1+\alpha/2;Q_{R}}\leq C\left(\|f\|_{\alpha,\alpha/2;Q_{2R}} + \|u\|_{L^{\infty}(Q_{2R})} \right),
\end{equation*}
where $C=C(n,\alpha, R, \Lambda,\lambda)>0$ is a constant. 
\end{prop}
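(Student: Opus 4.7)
My plan is to follow the classical perturbation approach to parabolic Schauder estimates, reducing the general statement to the constant coefficient case and ultimately to the pure heat operator. The outline is: (i) prove the estimate for the heat equation $\partial_t w = \Delta w + g$ via representation formulas; (ii) extend to general constant coefficient operators $L_0 = \sum a_0^{ij}\partial_i\partial_j + \sum b_0^i\partial_i + c_0$ by a linear change of variables; (iii) recover the variable coefficient case by freezing coefficients and iterating.

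For step (i), the natural tool is a Campanato-type characterisation of the parabolic Hölder space $C^{2+\alpha,1+\alpha/2}$: membership is equivalent to the existence, at each point $z_0$, of a first-order-in-$t$, second-order-in-$x$ polynomial $P_{z_0}$ such that $|u - P_{z_0}| \leq C\,\varrho(\cdot,z_0)^{2+\alpha}$ on small parabolic cylinders. Using the representation of solutions of the heat equation via the heat kernel one obtains interior $L^\infty$ bounds on $\partial_t w$ and $D^2 w$ in terms of $\|w\|_{L^\infty}$ and $\|g\|_{\alpha,\alpha/2}$; applying these to $w$ minus its Taylor polynomial at $z_0$ and exploiting the scaling $(x,t)\mapsto(\lambda x,\lambda^2 t)$ of the heat operator yields the required Hölder control, hence the estimate
\begin{equation*}
\|w\|_{2+\alpha,1+\alpha/2;Q_R} \leq C\bigl(\|g\|_{\alpha,\alpha/2;Q_{2R}} + \|w\|_{L^\infty(Q_{2R})}\bigr).
\end{equation*}
Step (ii) follows by diagonalising $a_0^{ij}$ via an orthogonal change of variables and rescaling each axis, with constants depending only on $\lambda$; the zeroth and first order terms are absorbed through the standard interpolation inequality $\|Du\|_{L^\infty}\leq \varepsilon\|D^2 u\|_{L^\infty}+C_\varepsilon\|u\|_{L^\infty}$.

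For step (iii), I would fix $z_0 \in Q_R$ and a small radius $\rho$, and rewrite the equation as
\begin{equation*}
\partial_t u - L(z_0) u = \bigl[L(\cdot,\cdot)-L(z_0)\bigr]u + f,
\end{equation*}
where $L(z_0)$ denotes the operator with coefficients frozen at $z_0$. The Hölder continuity of the coefficients bounds the Hölder norm of the new right-hand side on $Q_\rho(z_0)$ by $\|f\|_{\alpha,\alpha/2;Q_\rho(z_0)} + \Lambda\,\rho^\alpha[u]_{2+\alpha,1+\alpha/2;Q_\rho(z_0)}$ plus lower-order terms. Applying the constant coefficient estimate from step (ii) on $Q_\rho(z_0)$ gives a local inequality in which the critical term $\rho^\alpha[u]_{2+\alpha,1+\alpha/2;Q_\rho(z_0)}$ appears on the right-hand side.

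The main obstacle is the absorption of this term: one cannot simply move it to the left because the semi-norm on the right lives on a larger cylinder. I would overcome this by introducing the weighted semi-norm
\begin{equation*}
N(u) = \sup_{z_0\in Q_R,\; 0<\rho\leq \rho_0}\rho^{2+\alpha}\,[u]_{2+\alpha,1+\alpha/2;Q_{\rho/2}(z_0)},
\end{equation*}
and choosing $\rho_0$ so small, in terms of $R$, $\Lambda$, $\lambda$, that the bad term becomes a fraction of $N(u)$ and can be absorbed into the left-hand side. A finite covering of $Q_R$ by cylinders $Q_{\rho/2}(z_0)$ then converts the resulting bound on $N(u)$ into the claimed global estimate on $Q_R$, with a constant depending on $n,\alpha,R,\Lambda,\lambda$. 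Alternatively, one may invoke this statement directly from Krylov's treatise, where precisely this perturbation-plus-covering scheme is carried out in detail.
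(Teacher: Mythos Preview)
The paper does not give a proof of this proposition: it is stated as a standard result with the reference to Krylov's treatise \cite{krylov}, and your final sentence already acknowledges this option. Your outline of the perturbation-plus-covering argument (heat kernel estimate, constant coefficient reduction, freezing of coefficients, absorption via weighted seminorms) is a correct sketch of the classical proof found there, so your proposal is consistent with the paper's treatment.
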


It is also possible to obtain higher order estimates, both in time and space. Indeed one has

\begin{prop}[Higher Order Schauder Estimates]\label{schauder-higher} 
Let $\alpha \in (0,1),R>0$ and $k,m\in \N$. 
Let $u\in C^{2+\alpha,1+\alpha/2}(Q_{3R})$ be a solution of
\begin{equation*}
\partial_{t}u=Lu+f 
\end{equation*}
on $Q_{3R}$, where $L$ is as in \eqref{operator}. Suppose that the coefficients of $L$ and $f$ satisfy the following assumptions:
\begin{align*}
&\|\partial_{t}^{r}D^{\gamma}a^{ij}\|_{\alpha,\alpha/2;Q_{2R}},\; \|\partial_{t}^{r}D^{\gamma} b^{i}\|_{\alpha,\alpha/2;Q_{2R}},\;  \|\partial_{t}^{r}D^{\gamma} c\|_{\alpha,\alpha/2;Q_{2R}}\leq \Lambda, \\
&\lambda^{-1}|\xi |^{2}\leq \sum_{i,j=1}^{n} a^{ij}(x,t)\xi_{i}\xi_{j}\leq \lambda |\xi |^{2}, \quad \forall \xi \in \R^{n}, (x,t)\in Q_{2R}, \\
&\partial_{t}^{r}D^{\gamma}f\in C^{\alpha,\alpha/2}(Q_{2R}),
\end{align*}
for some $\Lambda, \lambda >0$ and every $r\leq m$ and $\gamma \in \N^{n}$ with $|\gamma|\leq k$.
Then $\partial_{t}^{r}D^{\gamma}u\in C^{2+\alpha,1+\alpha/2}(Q_{R})$ and 
\begin{equation*}
\|\partial_{t}^{r}D^{\gamma}u\|_{2+\alpha,1+\alpha/2;Q_{R}}\leq 
C\left(\sum_{s\leq r}\sum_{|\beta|\leq |\gamma|}\|\partial_{t}^{s}D^{\beta}f\|_{\alpha,\alpha/2;Q_{2R}} + \|u\|_{L^{\infty}(Q_{2R})} \right)
\end{equation*}
for every $r\leq m$ and $\gamma\in \N^{n}$ with $|\gamma|\leq k$, where $C=C(n,\alpha,R,k,m,\Lambda,\lambda)>0$ is a positive constant.
\end{prop}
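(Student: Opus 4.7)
The plan is to prove Proposition \ref{schauder-higher} by induction on the total parabolic order $N = r + 2|\gamma|$ of differentiation (or, equivalently, on $k + m$), using the basic Schauder estimate Proposition \ref{schauder} as the engine and applying it to successive derivatives of $u$ on a sequence of nested cylinders $Q_{R} \subset Q_{R_1} \subset \ldots \subset Q_{R_N} \subset Q_{2R}$. The base case $N = 0$ is exactly Proposition \ref{schauder}. A preliminary step, independent of the induction, is to justify that $\partial_t^r D^\gamma u$ exists classically: this is done by first working with spatial and temporal difference quotients of $u$, which satisfy a parabolic equation of the same structure with right-hand sides bounded uniformly in the step size, and then passing to the limit using Proposition \ref{schauder}.

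Once $v = \partial_t^r D^\gamma u$ is known to make sense, I would differentiate $\partial_t u = Lu + f$ via the Leibniz rule to see that $v$ satisfies
\begin{equation*}
\partial_t v = L v + F_{r, \gamma},
\end{equation*}
where $F_{r,\gamma}$ is the sum of $\partial_t^r D^\gamma f$ and of products of parabolic derivatives of the coefficients $a^{ij}, b^i, c$ of total order at most $N$ with parabolic derivatives of $u$ of total order at most $N + 1$. Crucially, every term in $F_{r,\gamma}$ other than $\partial_t^r D^\gamma f$ itself involves a derivative of $u$ of strictly lower order than $v$. By the hypothesis on the coefficients and by the inductive hypothesis applied on the slightly larger cylinder $Q_{R_1}$, all such terms lie in $C^{\alpha,\alpha/2}(Q_{R_1})$, with
\begin{equation*}
\| F_{r,\gamma} \|_{\alpha, \alpha/2; Q_{R_1}} \leq C \Bigl( \sum_{s \leq r, |\beta| \leq |\gamma|} \| \partial_t^s D^\beta f \|_{\alpha, \alpha/2; Q_{2R}} + \| u \|_{L^\infty(Q_{2R})} \Bigr),
\end{equation*}
by the Hölder algebra property. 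Applying Proposition \ref{schauder} to $v$ on $Q_R \subset Q_{R_1}$ then closes the induction.

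The main technical obstacle is the combinatorial and bookkeeping work: controlling via one Leibniz expansion exactly which mixed derivatives of coefficients and which lower-order derivatives of $u$ appear in $F_{r,\gamma}$, choosing the intermediate radii $R_j$ so that each inductive application of Proposition \ref{schauder} loses only a controlled fraction of the cylinder, and absorbing the accumulating constants $C(n, \alpha, R_j, k, m, \Lambda, \lambda)$ into a single final constant. A secondary subtlety is verifying, at the first differentiation, that the difference-quotient estimates pass to the limit in the classical sense so that true partial derivatives (rather than distributional ones) satisfy the differentiated equation; this is a standard consequence of the uniform bound provided by Proposition \ref{schauder}.
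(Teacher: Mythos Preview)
The paper does not prove this proposition: it is stated in Appendix~\ref{schauder-appendix} as a standard result from parabolic Schauder theory, with Krylov's textbook \cite{krylov} given as the reference. Your proposed argument---differentiate the equation, expand via the Leibniz rule, and bootstrap by applying Proposition~\ref{schauder} to successive derivatives on a nested family of cylinders---is exactly the standard proof one finds in such references and is correct in outline.

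One phrasing point deserves care. When you say that every term in $F_{r,\gamma}$ ``involves a derivative of $u$ of strictly lower order than $v$,'' this is misleading as written. The Leibniz expansion of $\partial_t^r D^\gamma(a^{ij}\partial_i\partial_j u)$ produces terms such as $(\partial_l a^{ij})\,\partial_i\partial_j\bigl(\partial_t^r D^{\gamma-e_l} u\bigr)$, where the $u$-factor carries $|\gamma|+1$ spatial derivatives---one \emph{more} than $v=\partial_t^r D^\gamma u$. What is strictly lower is the index $(r',|\gamma'|)$ of the function $w=\partial_t^{r'}D^{\gamma'}u$ to which the inductive hypothesis applies; the inductive $C^{2+\alpha,1+\alpha/2}$ bound on $w$ then controls $D^2 w$ and $\partial_t w$ in $C^{\alpha,\alpha/2}$, which is precisely what $F_{r,\gamma}$ requires. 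With this clarification (and the analogous observation for the lower-order coefficients $b^i$, $c$), your induction closes exactly as you describe, and the difference-quotient step for justifying classical differentiability is the standard one.
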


\subsection{Estimates for the Conjugate Heat Equation along a Ricci Flow}

We now apply the results from the previous subsection to the following general setting. Consider a soluton $(M,g(\tau))_{\tau \in [0,T]}$ to the backward Ricci flow $\partial_{\tau}g=2\Ric$, and let $u\colon M\times (0,T)\to \R$ be a smooth solution of the backward conjugate heat equation:
\begin{equation}\label{conjheateq-backward}
\partial_{\tau}=\Delta u - Su,
\end{equation}
where $S$ denotes the scalar curvature. Given $m\in \N$ and a subset $\Omega\subseteq M\times [0,T]$, recall that the $C^{m}$-norm on $\Omega$ of a general time-dependent tensor field $A$ on $M$ is defined as\footnote{In this context, when $A$ is a function $\nabla A$ denotes its differential, not its gradient vector field.}
\begin{equation}\label{cm-norm-def}
\|A\|_{C^{m}(\Omega)}=\sum_{r+s \leq m} \|\partial_{t}^{r}\nabla^{s}A\|_{L^{\infty}(\Omega)}+ \|\nabla^{s}\partial_{t}^{r}A\|_{L^{\infty}(\Omega)}.
\end{equation}

We are interested in bounding $\|u\|_{C^{m}(\Omega)}$ in terms of its supremum norm, and we will use the interior Schauder estimates from the previous subsection. 

Suppose $m=1$. Consider a compact time-interval $I=[a,b]\subseteq (0,T)$ and take $\varepsilon>0$ such that $I_{\varepsilon}=[a-16\varepsilon^{2},b+\varepsilon^{2}/2]\subseteq (0,T)$. Given a point $(x_{0},\tau_{0})\in M\times I$ there exists a coordinate chart $(U,\varphi)$, centered at $x_{0}$, such that  $\varphi(U)=B(0,4\varepsilon)\subseteq \R^{n}$. On $B(0,4\varepsilon)\times (0,T)$ equation \eqref{conjheateq-backward} reads
\begin{equation*}
\partial_{\tau}u=g^{ij}\partial_{i}\partial_{j}u - g^{ij}\Gamma_{ij}^{k}\partial_{k}u - Su.
\end{equation*}
Introduce the function $v\colon Q_{4\varepsilon}\to \R$ defined as
\begin{equation*}
v(x,t)=u(x,\tau_{0}+t+\varepsilon^{2}/2).
\end{equation*}
Then $v$ is well defined and satisfies the equation
\begin{equation*}
\partial_{t}v=\widehat{g}^{\, ij}\partial_{i}\partial_{j}v-\widehat{g}^{\,ij}\widehat{\Gamma}_{ij}^{k}\partial_{k}u-\widehat{S}v,
\end{equation*}
where $\widehat{g}^{\, ij}, \widehat{\Gamma}^{k}_{ij}$ and $\widehat{S}$ are defined translating the time variable. Since $v\in C^{\infty}(Q_{4\varepsilon})$ and we have $\{(x,\tau_{0}+t+\varepsilon^{2}/2)\colon (x,t)\in Q_{3\varepsilon}\} \Subset  B(0,4\varepsilon)\times I_{\varepsilon}$, we conclude that $v\in C^{2+\alpha,1+\alpha/2}(Q_{3\varepsilon})$ and moreover there exist $\Lambda, \lambda>0$ such that
\begin{align*}
&\|\widehat{g}^{\, ij}\|_{\alpha,\alpha/2;Q_{3\varepsilon}},\;  \|\widehat{b}^{i}\|_{\alpha,\alpha/2;Q_{3\varepsilon}},\;  \|\widehat{S}\|_{\alpha,\alpha/2;Q_{3\varepsilon}}\leq \Lambda, \\
&\lambda^{-1}|\xi|^{2}\leq \sum_{i,j=1}^{n} \widehat{g}^{ij}(x,t)\xi_{i}\xi_{j}\leq \lambda |\xi|^{2},  \quad \forall \xi \in \R^{n}, (x,t)\in Q_{3R},
\end{align*}
where $\widehat{b}^{i}=\widehat{g}^{\, k\ell}\widehat{\Gamma}_{k\ell}^{i}$.
Therefore, applying Proposition \ref{schauder} to $v$ (with $f=0$), we get
\begin{equation*}
\|v\|_{2+\alpha,1+\alpha/2;Q_{\varepsilon}}\leq C\|v\|_{L^{\infty}(Q_{2\varepsilon})}.
\end{equation*}
Turning back to the original time $\tau$, the above inequality becomes
\begin{equation*}
\|u\|_{2+\alpha,1+\alpha/2;Q}\leq C\|u\|_{L^{\infty}(Q^{\ast})},
\end{equation*}
where $Q=B(0,\varepsilon)\times (\tau_{0}-\varepsilon^{2}/2, \tau_{0}+\varepsilon^{2}/2)$ and $Q^{\ast}=B(0,2\varepsilon)\times (\tau_{0}-7\varepsilon^{2}/2,\tau_{0}+\varepsilon^{2}/2)$. In particular, this implies the (weaker) estimate 
\begin{equation}\label{partial-deriv-est-chart}
\|\partial_{t}u\|_{L^{\infty}(Q)}+\|Du\|_{L^{\infty}(Q)} \leq C\|u\|_{L^{\infty}(Q^{\ast})}.
\end{equation}
Now let $P=\left(\varphi^{-1}\times \text{Id}\right)(Q)$ and 
$P^{\ast}=\left(\varphi^{-1}\times \text{Id}\right)(Q^{\ast})$.
An easy computation allows us to bound the covariant derivative of $u$ on $P$. Indeed, one has
\begin{equation*}
|\nabla u|^{2}=g^{ij}\partial_{i}u\partial_{j}u \leq \lambda |Du|^{2}\leq n\lambda \|Du\|^{2}_{L^{\infty}(P)}
\end{equation*}
which yields
\begin{equation*}
\|\nabla u\|_{L^{\infty}(P)}\leq \sqrt{n\lambda}\|Du\|_{L^{\infty}(P)}.
\end{equation*}
Therefore, from \eqref{partial-deriv-est-chart}, we obtain
\begin{equation}
\|u\|_{C^{1}(P)}=\|u\|_{L^{\infty}(P)}+ \|\partial_{t}u\|_{L^{\infty}(P)}+\|\nabla u\|_{L^{\infty}(P)} \leq C\|u\|_{L^{\infty}(P^{\ast})},
\end{equation}
for some positive constant $C>0$.

Arguing along the same lines, using that $u$ is a smooth solution of \eqref{conjheateq-backward} and that Proposition \ref{schauder-higher} gives bounds on all the mixed partial derivatives of $u$, it is possible to bound all the $C^{m}$-norms of $u$, obtaining the following result.

\begin{prop}[$C^m$-Estimates]\label{schauder-higher-flow} 
Let $I=[a,b]\subseteq (0,T)$ and let $\varepsilon>0$ be as above. 
For every $(x_{0},\tau_{0})\in M\times I$ there exist $P=P(x_{0},\tau_{0},\varepsilon)$ and 
$P^{\ast}=P^{\ast}(x_{0},\tau_{0},\varepsilon)$, open neighbourhoods of $(x_{0},\tau_{0})$ in $M\times (0,T)$ with
$P\subseteq P^{\ast}$, such that for every $m\in \N$
\begin{equation}\label{schauder-higher-flow-ineq}
\|u\|_{C^{m}(P)} \leq C_{m} \|u\|_{L^{\infty}(P^{\ast})},
\end{equation}
where $C_{m}>0$ is a constant independent of $u$. 
\end{prop}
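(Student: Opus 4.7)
The plan is to follow verbatim the argument already carried out in the discussion preceding the statement for the case $m=1$, but invoking Proposition \ref{schauder-higher} in place of Proposition \ref{schauder} and then converting Euclidean mixed partial derivatives into the coordinate-free covariant/time derivatives appearing in \eqref{cm-norm-def}.

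First, I would fix a coordinate chart $(U,\varphi)$ centered at $x_0$ with $\varphi(U)=B(0,4\varepsilon)$ and perform the time shift $v(x,t)=u(x,\tau_0+t+\varepsilon^2/2)$, so that $v$ solves $\partial_t v=\hat g^{ij}\partial_i\partial_j v-\hat g^{ij}\hat\Gamma_{ij}^k\partial_k v-\hat S v$ on $Q_{4\varepsilon}$, exactly as in the $m=1$ case. Since $\hat g^{ij}$, $\hat b^i:=\hat g^{k\ell}\hat\Gamma^{\,i}_{k\ell}$ and $\hat S$ come from a smooth family of metrics on a precompact set in space-time, after possibly shrinking $\varepsilon$ we obtain, for any prescribed $k_0,m_0\in\N$, uniform bounds
\begin{equation*}
\|\partial_t^r D^\gamma \hat g^{ij}\|_{\alpha,\alpha/2;Q_{3\varepsilon}},\;\|\partial_t^r D^\gamma \hat b^i\|_{\alpha,\alpha/2;Q_{3\varepsilon}},\;\|\partial_t^r D^\gamma \hat S\|_{\alpha,\alpha/2;Q_{3\varepsilon}}\leq \Lambda,
\end{equation*}
for all $r\leq m_0$, $|\gamma|\leq k_0$, together with the uniform ellipticity bound already noted.

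Second, I would apply Proposition \ref{schauder-higher} to $v$ with $f\equiv 0$, obtaining
\begin{equation*}
\|\partial_t^r D^\gamma v\|_{2+\alpha,1+\alpha/2;Q_\varepsilon}\leq C_{r,\gamma}\,\|v\|_{L^\infty(Q_{2\varepsilon})}
\end{equation*}
for every $r\leq m_0$, $|\gamma|\leq k_0$. Translating back to $u$, this yields $L^\infty$-bounds on $Q=B(0,\varepsilon)\times(\tau_0-\varepsilon^2/2,\tau_0+\varepsilon^2/2)$ for all Euclidean mixed partial derivatives $\partial_t^r D^\gamma u$ up to any prescribed order, controlled by $\|u\|_{L^\infty(Q^\ast)}$, where $Q^\ast$ is as in the $m=1$ argument. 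Setting $P=(\varphi^{-1}\times\mathrm{Id})(Q)$ and $P^\ast=(\varphi^{-1}\times\mathrm{Id})(Q^\ast)$, these are the desired neighbourhoods of $(x_0,\tau_0)$, depending only on $(x_0,\tau_0,\varepsilon)$ and not on $u$.

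Third, I would convert the Euclidean bounds into bounds on $\|u\|_{C^m(P)}$ in the sense of \eqref{cm-norm-def}. Writing $\nabla^s\partial_t^r u$ in the chart produces a polynomial expression in the Christoffel symbols $\hat\Gamma$ and their spatial derivatives (up to order $s-1$) with multilinear coefficients acting on $\{D^\beta\partial_t^r u\colon |\beta|\leq s\}$; similarly, expressing $\partial_t^r\nabla^s u$ generates, via the standard commutation formula $[\partial_t,\nabla]=-2\Ric\ast\cdot+\nabla\Ric\ast\cdot$ along the backward flow, additional terms involving time-derivatives of $\hat\Gamma$ and hence of $\mathrm{Ric}_{g(\tau)}$. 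All such coefficient terms are smooth on $\overline{P^\ast}$ and therefore uniformly bounded there, so combining with Step two gives $\|u\|_{C^m(P)}\leq C_m\|u\|_{L^\infty(P^\ast)}$, with $k_0,m_0$ from Step one chosen large enough in terms of $m$.

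The main obstacle is purely bookkeeping: converting between Euclidean mixed partial derivatives and the coordinate-free mixed expressions $\nabla^s\partial_t^r u$, $\partial_t^r\nabla^s u$, and keeping track of the commutator terms that arise along the Ricci flow. Since everything is smooth on a precompact set, the resulting lower-order curvature factors are uniformly bounded and the argument closes by a straightforward induction on $m$.
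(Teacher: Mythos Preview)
Your proposal is correct and follows essentially the same approach as the paper: the paper's own argument is simply the sentence ``Arguing along the same lines, using that $u$ is a smooth solution of \eqref{conjheateq-backward} and that Proposition \ref{schauder-higher} gives bounds on all the mixed partial derivatives of $u$, it is possible to bound all the $C^{m}$-norms of $u$,'' together with the remark afterwards that one must also control the mixed partial derivatives of the evolving metric. Your write-up spells out exactly these steps---chart and time-shift as in the $m=1$ case, invoke Proposition \ref{schauder-higher} with $f\equiv 0$, then convert Euclidean to covariant derivatives using that all curvature and Christoffel terms are smooth on a precompact set---so there is nothing to add.
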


\begin{rem}\begin{enumerate}
\item In Proposition \ref{schauder-higher-flow}, $P$ and $P^{\ast}$ are indeed product chart domains, obtained by restricting $(U\times (0,T),\varphi\times \id)$.
\item In order to get the higher order estimates, one also needs to bound the mixed partial derivatives of the metric $g$, since it is evolving in time; for the same reason the mixed partial/covariant derivatives appearing in \eqref{cm-norm-def} are in general not the same.
\item The constants $C_{m}$ depend on all the various choices made, in particular they depend on $x_{0},\tau_{0}, \varepsilon, (U,\varphi), g, k,m$, and hence on the geometry of $(M,g)$ in a neighbourhood of $(x_{0},\tau_{0})$, but not on $u$.      
\end{enumerate}
\end{rem}

\section{Complements on Metric Measure Spaces and Metric Flows}\label{mms-appendix}

\subsection{Convergence of Metric Measure Spaces}

In \cite[Section~2.3]{Ba20_compactness} Bamler introduced the notion of Gromov-$W_{1}$-Wasserstein-distance, $GW_{1}$-distance for short, between metric measure spaces and the related convergence. It will be useful to have some results linking it to other types of convergences, in particular pointed ones, since we are mainly interested in working with non-compact spaces. In \cite[Definition~3.9]{GMS}, the authors introduced the following concept.

\begin{defn}\label{gromov-meas-conv} 
A sequence $(X_{i},d_{i},\mu_{i},x_{i})_{i\in \N}$ of pointed metric measure spaces converges to the pointed metric measure space $(X_{\infty},d_{\infty},\mu_{\infty},x_{\infty})$ in the \emph{pointed Gromov-measured} sense, if there exist isometric embeddings 
$\varphi_{i}\colon (X_{i},d_{i})\to (Z,d_{Z})$, for $i \in \N\cup \{\infty\}$, into a common complete and separable metric space
$(Z,d_{Z})$, such that
\begin{enumerate}
\item $(\varphi_{i})_{\ast}\mu_{i}\to (\varphi_{\infty})_{\ast}\mu_{\infty}$ narrowly;
\item $\varphi_{i}(x_{i})\to \varphi_{\infty}(x_{\infty})\in \supp ((\varphi_{\infty})_{\ast}\mu_{\infty})$
in $(Z,d_{Z})$.
\end{enumerate}
\end{defn}

It is easy to prove that the convergence just introduced is weaker than the $GW_{1}$ one. 

\begin{prop}[$GW_1$-Convergence Implies Gromov-Measured Convergence]\label{GW-pGm} 
Let $(X_{i},d_{i},\mu_{i})_{i\in \N}$ be a sequence of complete metric measure spaces converging to a complete metric measure space $(X_{\infty},d_{\infty},\mu_{\infty})$ in the $GW_{1}$-sense, and suppose that all the measures have full support. Then there exist points $x_{i}\in X_{i}$ for $i\in \N\cup\{\infty\}$, such that $(X_{i},d_{i},\mu_{i},x_{i})_{i\in \N}$ converges to $(X_{\infty},d_{\infty},\mu_{\infty},x_{\infty})$ in the pointed Gromov-measured sense. 
\end{prop}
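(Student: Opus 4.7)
The plan is to unpack the definition of $GW_1$-convergence, translate it into narrow convergence of the pushforward measures, and then locate basepoints via a standard diagonal argument driven by the full-support hypothesis.

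First, by the definition of $GW_1$-convergence recalled in \cite[Section~2.3]{Ba20_compactness}, there exists a complete separable metric space $(Z,d_Z)$ together with isometric embeddings $\varphi_i\colon(X_i,d_i)\hookrightarrow(Z,d_Z)$ for $i\in\N\cup\{\infty\}$ such that the pushforward probability measures $\nu_i:=(\varphi_i)_\ast\mu_i$ satisfy $d_{W_1}^Z(\nu_i,\nu_\infty)\to 0$ as $i\to\infty$. Since $W_1$-convergence of probability measures implies narrow convergence, condition~(1) of Definition \ref{gromov-meas-conv} is already satisfied with respect to these embeddings.

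Next, I would pick the basepoints. Fix any $x_\infty\in X_\infty$; by the full-support assumption on $\mu_\infty$ we have $\varphi_\infty(x_\infty)\in\supp\nu_\infty$, so for every $\varepsilon>0$ the open ball $B_Z(\varphi_\infty(x_\infty),\varepsilon)$ carries strictly positive $\nu_\infty$-mass. Applying the portmanteau theorem (lower semicontinuity on open sets) to the narrow convergence $\nu_i\to\nu_\infty$ yields
\[
\liminf_{i\to\infty}\nu_i\bigl(B_Z(\varphi_\infty(x_\infty),\varepsilon)\bigr)\;\geq\;\nu_\infty\bigl(B_Z(\varphi_\infty(x_\infty),\varepsilon)\bigr)\;>\;0,
\]
so this ball has positive $\nu_i$-mass for all sufficiently large $i$. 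Because each $\nu_i$ is supported on $\varphi_i(X_i)\subseteq Z$ (being the pushforward of a probability measure on $X_i$), the ball must actually meet $\varphi_i(X_i)$, and hence there exists $x_i\in X_i$ with $d_Z(\varphi_i(x_i),\varphi_\infty(x_\infty))<\varepsilon$.

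A standard diagonal extraction then promotes this to a convergent sequence: choose integers $N_1<N_2<\dots$ such that for every $i\geq N_k$ one can select $x_i^{(k)}\in X_i$ with $d_Z(\varphi_i(x_i^{(k)}),\varphi_\infty(x_\infty))<1/k$, and set $x_i:=x_i^{(k)}$ whenever $N_k\leq i<N_{k+1}$. This gives $\varphi_i(x_i)\to\varphi_\infty(x_\infty)$ in $(Z,d_Z)$, which combined with the narrow convergence above yields pointed Gromov-measured convergence in the sense of Definition \ref{gromov-meas-conv}. There is no serious obstacle in this argument; the only mild delicacy is to observe that a ball with positive $\nu_i$-mass truly intersects the image $\varphi_i(X_i)$ itself and not just its closure, which however is immediate from the fact that $\nu_i$ is carried by $\varphi_i(X_i)$.
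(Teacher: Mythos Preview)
Your argument is correct and somewhat more streamlined than the paper's. Both proofs begin the same way, passing to a single ambient space $(Z,d_Z)$ and observing that the pushforwards converge in $W_1$ (hence narrowly). The difference lies in how the basepoints are located. You fix an arbitrary $x_\infty\in X_\infty$, invoke full support to ensure the ball $B_Z(\varphi_\infty(x_\infty),\varepsilon)$ has positive $\nu_\infty$-mass, and then use the portmanteau theorem to force $\nu_i$-mass into this ball for large $i$, yielding $x_i$ directly. The paper instead chooses balls $\overline{B}(y_i,r_i)\subseteq X_i$ of mass at least $9/10$ for every $i$ (including $i=\infty$) and runs a coupling contradiction: if the images $Y_i=\varphi_i(\overline{B}(y_i,r_i))$ stayed $\varepsilon$-separated from $Y_\infty$ along a subsequence, the coupling cost would force $q_i(Y_i\times Y_\infty)\to 0$, whereas the mass bounds force $q_i(Y_i\times Y_\infty)\geq 4/5$.

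Your route is cleaner for the proposition as stated and has the pleasant feature that $x_\infty$ can be prescribed. The paper's route, however, is engineered so that the basepoints $x_i$ land inside \emph{prescribed} large-mass balls in each $X_i$; this is exactly what is reused in Proposition~\ref{conv-centers}, where the balls are centred at given $H$-centres $\z_k$ and one needs $d_t^k(\z_k,\y_k)\leq r$. Your argument does not deliver this control on the $X_i$ side without further work. One minor cosmetic point: the existence of a single ambient $(Z,d_Z)$ realising the $GW_1$-convergence is not literally the definition but a consequence of \cite[Lemma~2.12, Theorem~2.14]{Ba20_compactness}, as the paper notes.
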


\begin{proof}
First of all, there exist sequences of isometric embeddings $\varphi_{i}\colon (X_{i},d_{i})\to (Z,d_{Z})$ and couplings $q_{i}\in \mathcal{P}(Z\times Z)$ between $(\varphi_{i})_{\ast}\mu_{i}$ and $(\varphi_{\infty})_{\ast}\mu_{\infty}$, for $i\in \N\cup \{\infty\}$, realising the $GW_{1}$-convergence, i.e.~such that 
\begin{equation*}
\lim_{i\to \infty}\int_{Z\times Z}d_{Z}(x,y)dq_{i}(x,y)=0.
\end{equation*}
Moreover, we can assume $(Z,d_{Z})$ is complete and separable; see for example \cite[Lemma~2.12]{Ba20_compactness} and the proof of \cite[Theorem~2.14]{Ba20_compactness}.

Now choose points $y_{i}\in X_{i}$ and radii $r_{i}>0$ such that $\mu_{i}(\overline{B}(y_{i},r_{i}))\geq 9/10$ for every $i\in \N\cup \{\infty\}$, and denote $Y_{i}=\varphi_{i}(\overline{B}(y_{i},r_{i}))\subseteq Z$.

We claim there exist points $z_{i}\in Y_{i}$ such that $z_{i}\to z_{\infty}$ in $Z$. Indeed, suppose by contradiction this is not the case. Then there exists $\varepsilon>0$ such that, up to a subsequence, $d_{Z}(x,y)\geq \varepsilon$ for every $(x,y)\in Y_{i}\times Y_{\infty}$ and $i\in \N$. It follows
\begin{equation*}
\int_{Y_{i}\times Y_{\infty}}d_{Z}(x,y)dq_{k}(x,y)\geq \varepsilon q_{i}(Y_{i}\times Y_{\infty})
\end{equation*}
and therefore
\begin{equation}\label{qk-conv-zero}
q_{i}(Y_{i}\times Y_{\infty})\leq \varepsilon^{-1} \int_{Z\times Z}d_{Z}(x,y)dq_{k}(x,y)\longrightarrow 0.
\end{equation}
Now
\begin{equation*}
q_{i}((Z\setminus Y_{i})\times Y_{\infty})\leq q_{i}((Z\setminus Y_{i})\times Z)=(\varphi_{i})_{\ast}\mu_{i}(Z\setminus Y_{i})
=\mu_{i}(X_{i}\setminus \overline{B}(y_{i},r_{i}))\leq 1/10
\end{equation*}
and similarly 
\begin{equation*}
q_{i}(Y_{i}\times (Z\setminus Y_{\infty}))\leq q_{i}(Z\times (Z\setminus Y_{\infty}))=
(\varphi_{\infty})_{\ast}\mu_{\infty}(Z\setminus Y_{\infty})=\mu_{\infty}(X_{\infty}\setminus \overline{B}(y_{\infty},r_{\infty}))\leq 1/10.
\end{equation*}
Therefore,
\begin{equation*}
1=q_{i}(Z\times Z)= q_{i}(Y_{i}\times Y_{\infty})+q_{i}((Z\setminus Y_{i})\times Y_{\infty})+q_{i}(Z\times (Z\setminus Y_{\infty}))
\leq 1/5+q_{i}(Y_{i}\times Y_{\infty})
\end{equation*}
which implies $q_{i}(Y_{i}\times Y_{\infty})\geq 4/5$ for $i$ large enough, in contradiction to \eqref{qk-conv-zero}.

At this point, defining $x_{i}=\varphi_{i}^{-1}(z_{i})\in X_{i}$ we get $\varphi_{i}(x_{i})\to \varphi_{\infty}(x_{\infty})$ and we conclude.
\end{proof}

\subsection{Continuity of Metric Solitons}

A simple but useful consequence of the definition of metric soliton, \cite[Definition~3.42]{Ba20_compactness}, is the following continuity result.

\begin{prop}[Metric Solitons are Continuous]\label{ms-cont} 
Let $(\X,(\mu_{t})_{t<0})$ be an $H$-concentrated metric soliton. Then $\X$ is continuous. 
\begin{proof}
Using the notations of the cited definition, we have
\begin{equation*}
\int_{\X_{t}}\int_{\X_{t}}d_{t}(\x,\y)d\mu_{t}(\x)d\mu_{t}(\y) = \sqrt{|t|}\int_{X}\int_{X}d(x,y)d\mu(x)d\mu(y)
\end{equation*}
for every $t<0$. 
Since $\X$ is $H$-concentrated, it follows that $\var_{t}(\mu_{t})\leq H|t|$ for every $t\leq 0$ (see \cite[Proposition~3.43]{Ba20_compactness}), and therefore the above function is finite and continuous on $(-\infty,0)$. We hence conclude using \cite[Theorem~4.9]{Ba20_compactness}.
\end{proof}
\end{prop}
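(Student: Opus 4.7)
The plan is to exploit the self-similar structure built into the definition of a metric soliton in order to reduce continuity of $\X$ to continuity of an explicit scalar function of $t$. By \cite[Definition~3.42]{Ba20_compactness} there is a model metric measure space $(X,d,\mu)$ together with isometric identifications $\iota_{t}\colon (X,\sqrt{|t|}\,d)\to (\X_{t},d_{t})$ with $(\iota_{t})_{\ast}\mu=\mu_{t}$. Pulling back distances and measures, one immediately obtains the identity
\begin{equation*}
\int_{\X_{t}}\int_{\X_{t}}d_{t}(\x,\y)\,d\mu_{t}(\x)d\mu_{t}(\y)
=\sqrt{|t|}\int_{X}\int_{X}d(x,y)\,d\mu(x)d\mu(y)
\end{equation*}
stated in the excerpt. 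Call the constant on the right $\sqrt{|t|}\,C_{0}$.

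The next step is to verify that $C_{0}<\infty$. This uses $H$-concentration: \cite[Proposition~3.43]{Ba20_compactness} yields $\var_{t}(\mu_{t})\leq H|t|$, and applying the identification $\iota_{t}$ (again using the scaling factor $\sqrt{|t|}$) gives $\var(\mu)\leq H$. Since $\mu$ is a probability measure, Cauchy--Schwarz implies
\begin{equation*}
C_{0}=\int_{X}\int_{X}d(x,y)\,d\mu(x)d\mu(y)\leq\sqrt{\var(\mu)}\leq\sqrt{H}<\infty.
\end{equation*}

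Thus $t\mapsto \sqrt{|t|}\,C_{0}$ is a finite and continuous function on $(-\infty,0)$, so the averaged pairwise distance integral above depends continuously on $t$. At this point \cite[Theorem~4.9]{Ba20_compactness}, which characterises continuity of a metric flow through continuity of precisely such averaged distance functionals, delivers the claim.

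The proof presents no genuine analytic obstacle; the only care required is the bookkeeping of the identification $\iota_{t}$ (both for distances and for the pushforward of $\mu$) and the correct invocation of the continuity criterion in Bamler's Theorem~4.9. Once the scaling identity and the finiteness $\var(\mu)\leq H$ are in place, continuity is immediate.
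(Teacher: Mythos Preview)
Your proof is correct and follows essentially the same approach as the paper: derive the scaling identity from the metric soliton structure, use $H$-concentration via \cite[Proposition~3.43]{Ba20_compactness} to obtain finiteness, and conclude with \cite[Theorem~4.9]{Ba20_compactness}. You simply make the finiteness step more explicit by spelling out the Cauchy--Schwarz bound $C_{0}\leq\sqrt{\var(\mu)}\leq\sqrt{H}$, whereas the paper leaves this implicit in the phrase ``the above function is finite''.
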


\subsection{Parabolic Rescaling and Metric Flows}\label{parab-rescaling-mf-app}

Besides the classical context, the parabolic rescaling operation can be performed also on general metric flows and metric flow pairs. Here we want to enlighten the relationship between these two constructions. We start giving the necessary notions.

Let $\X$ be a metric flow over $I\subseteq \R$. Given $t_{0}\in \R$ and $\lambda>0$,
define $I^{-t_{0},\lambda}=\{\lambda(t-t_{0})\colon t\in I\}$. Then 
\begin{equation*}
\left(\X^{-t_{0},\lambda}, {\mathfrak{t}}^{-t_{0},\lambda}, (d^{-t_{0},\lambda}_{t})_{t\in I^{-t_{0},\lambda}}, 
(\nu^{-t_{0},\lambda}_{\x;s})_{\x\in \X^{-t_{0},\lambda}, s\in I^{-t_{0},\lambda}, s\leq \mathfrak{t}^{-t_{0},\lambda}(\x)}\right),
\end{equation*}
where 
\begin{itemize}
\item $\X^{-t_{0},\lambda}=\X$ and ${\mathfrak{t}}^{-t_{0},\lambda}=\lambda(\mathfrak{t}-t_{0})$,
\item $d^{-t_{0},\lambda}_{t}=\sqrt{\lambda} d_{t_{0}+t/\lambda}$, 
\item $\nu^{-t_{0},\lambda}_{\x; s}=\nu_{\x; t_{0}+s/\lambda}$,
\end{itemize}
is a metric flow over $I^{-t_{0},\lambda}$, that will be denoted by $\X^{-t_{0},\lambda}$ and called a \emph{parabolic rescaling} of $\X$. In particular,
\begin{equation*}
\X^{-t_{0},\lambda}_{t}=\X_{t_{0}+t/\lambda}
\end{equation*}
for every $t\in I^{-t_{0},\lambda}$.

Similarly, if $(\X, (\mu_{t})_{t\in I'})$ is a metric flow pair over $I$ then $(\X^{-t_{0},\lambda}, (\mu^{-t_{0},\lambda}_{t})_{t\in I'^{,- t_{0},\lambda}})$ is a metric flow pair over $I^{-t_{0},\lambda}$, where $\X^{-t_{0},\lambda}$ is as above and
\begin{equation*}
\mu^{-t_{0},\lambda}_{t}=\mu_{t_{0}+t/\lambda}
\end{equation*}
for all $t\in I'^{,-t_{0},\lambda}=\{\lambda(t-t_{0})\colon t\in I' \}$.

Now let $(M,g(t))_{t<T}$ be a Ricci flow (possibly induced by a Ricci shrinker $(M,g,f)$), where $T<\infty$ is the singular time, and consider its corresponding metric flow $\X$ over an open interval $I\subseteq (-\infty,T)$. Given $t_{0}\leq T$ and $\lambda>0$ we can parabolically rescale $\X$, obtaining the metric flow $\X^{-t_{0},\lambda}$ over the interval $I^{\ast}=I^{-t_{0},\lambda}=\{t\colon t_{0}+t/\lambda \in I \}$, given as above. Note that, by construction, a point $\x\in \X^{-t_{0},\lambda}_{t}=\X_{t_{0}+t/\lambda}$ is of the form $\x=(x,t_{0}+t/\lambda), x\in M$. 

On the other hand, starting from $(M,g(t))_{t<T}$ we can instead first form the rescaled Ricci flow $(M,g_{-t_{0},\lambda}(t))_{t<T^{\ast}}$, where $T^{\ast}=\lambda(T-t_{0})$ is the new singular time, and consider its related quantities, in particular the family of probability measures 
$(\nu^{g_{-t_{0},\lambda}}_{(x,t);s})_{x\in M, s\leq t\leq T^{\ast}}$.
Since $I^{\ast}\subseteq (-\infty,T^{\ast})$ is an open interval, we are allowed to construct the metric flow $\X^{\ast}$
over $I^{\ast}$ corresponding to $(M,g_{-t_{0},\lambda}(t))_{t\in I^{\ast}}$, which can be expressed in terms of $\X$ as
\begin{itemize}
\item $\X^{\ast}_{t}= \X_{t_{0}+t/\lambda}$,
\item $d^{\ast}_{t}=\sqrt{\lambda}d_{t_{0}+t/\lambda}$,
\item $\nu^{\ast}_{\x;s}=\nu_{\x; t_{0}+s/\lambda}$,
\end{itemize}
for every $s\leq t\in I^{\ast}$ and $\x\in \X^{\ast}_{t}$, where we are using the identification 
\begin{equation*}
\X^{\ast}_{t}\longrightarrow \X_{t_{0}+t/\lambda},\quad (x,t)\mapsto (x,t_{0}+t/\lambda).
\end{equation*}
Therefore $\X^{\ast}$ coincides with the parabolic rescaling of $\X$, that is $\X^{\ast}=\X^{-t_{0},\lambda}$.

Now fix a point $q\in M$ and let $(\X, (\mu_{t})_{t\in I})$ be the metric flow pair over $I$ corresponding to the pointed Ricci flow $(M,g(t),(q,t_{\max}))_{t\in I}$, where $t_{\max}=\sup I$. Then we can consider its parabolic rescaling, which is the metric flow pair over $I^{\ast}$ given by $(\X^{-t_{0},\lambda},(\mu^{-t_{0},\lambda}_{t})_{t\in I^{\ast}})$, where
\begin{equation*}
\mu^{-t_{0},\lambda}_{t}=\mu_{t_{0}+t/\lambda}
\end{equation*}
for every $t\in I^{\ast}$. In particular $\mu^{-t_{0},\lambda}_{t}=\nu_{(q,t_{\max});t_{0}+t/\lambda}$. 

On the other hand, for the metric flow pair $(\X^{\ast}, (\mu^{\ast}_{t})_{t\in I^{\ast}})$ corresponding to the pointed Ricci flow $(M,g_{-t_{0},\lambda}(t), (q, t_{\max}^{\ast}))_{t\in I^{\ast}}$, where $t_{\max}^{\ast}=\sup I^{\ast}$, we have, under the above identification
\begin{equation*}
\mu^{\ast}_{t}=\nu^{g_{-t_{0},\lambda}}_{(q,t_{\max}^{\ast});t}=\nu_{(q,t_{\max});t_{0}+t/\lambda}=\mu_{t_{0}+t/\lambda}
\end{equation*}
since $t_{0}+t_{\max}^{\ast}/\lambda=t_{\max}$. Therefore $\mu^{\ast}_{t}=\mu^{-t_{0},\lambda}_{t}$, that is 
$(\X^{\ast},(\mu^{\ast}_{t})_{t\in I^{\ast}})=(\X^{-t_{0},\lambda}, (\mu^{-t_{0},\lambda})_{t\in I^{\ast}})$.


\makeatletter
\def\@listi{%
  \itemsep=0pt
  \parsep=1pt
  \topsep=1pt}
\makeatother
{\fontsize{10}{12}\selectfont

}
\vspace{-2mm}
\printaddress

\end{document}